\documentclass[12pt]{amsart}
\usepackage[utf8]{inputenc}
\usepackage[marginratio=1:1,height=9in,width=6.5in]{geometry}
\usepackage{amsfonts, amsmath}
\usepackage{xcolor}
\usepackage{hyperref}
\usepackage{graphicx}
\usepackage{amsthm}
\usepackage{amssymb}
\usepackage{bbm}
\usepackage{mathtools}
\usepackage{latexsym}
\usepackage{listings}
\usepackage{multirow}
\usepackage{enumitem}
\usepackage{siunitx}

\newtheorem{theorem}{Theorem}
\newtheorem{lem}{Lemma}
\newtheorem*{rmk}{Remark}
\newtheorem{proposition}{Proposition}
\newtheorem{assump}{Assumption}

\newcommand{\ve}{\varepsilon}

\newcommand{\EE}{\mathbb{E}}
\newcommand{\RR}{\mathbb{R}}
\newcommand{\PP}{\mathbb{P}}
\newcommand{\ZZ}{\mathbb{Z}}
\newcommand{\mF}{\mathcal{F}}
\newcommand{\mG}{\mathcal{G}}

\newcommand{\mO}{\mathcal{O}}

\newcommand{\mS}{\mathcal{S}}
\newcommand{\mU}{\mathcal{U}}
\newcommand{\mX}{\mathcal{X}}

\newcommand{\rd}{\mathrm{d}}

\newcommand{\bt}{\overline{t}}
\newcommand{\bx}{\overline{x}}
\newcommand{\btx}{(\overline{t}, \overline{x})}

\newcommand{\nbu}{\nabla_u}
\newcommand{\nx}{\nabla_x}
\newcommand{\ny}{\nabla_y}
\newcommand{\pt}{\partial_t}
\newcommand{\ps}{\partial_s}
\newcommand{\pve}{\partial_{\ve}}
\newcommand{\hp}{\widehat{p}}
\newcommand{\hq}{\widehat{q}}
\newcommand{\htk}{\widehat{t}_k}
\newcommand{\hsk}{\widehat{s}_k}
\newcommand{\hxk}{\widehat{x}_k}
\newcommand{\hyk}{\widehat{y}_k}
\newcommand{\xlet}{x^{0,\ve}_t}
\newcommand{\xyet}{x^{1,\ve}_t}
\newcommand{\xeet}{x^{2,\ve}_t}

\newcommand{\pyet}{\phi^{1,\ve}_t}
\newcommand{\peet}{\phi^{2,\ve}_t}
\newcommand{\hPhik}{\widehat{\Phi}_k}

\newcommand{\tp}{^{\top}}
\newcommand{\parentheses}[1]{\left(#1\right)}
\newcommand{\sqbra}[1]{\left[#1\right]}
\newcommand{\curlybra}[1]{\left\{#1\right\}}
\newcommand{\abs}[1]{\left|#1\right|}
\newcommand{\norm}[1]{\left\|#1\right\|}
\newcommand{\normHtwo}[1]{\left\|#1\right\|_{(T;H^2)}}
\newcommand{\inner}[2]{\left\langle#1,\,#2\right\rangle}
\newcommand{\fd}[2]{\dfrac{\delta #1}{\delta #2}}
\newcommand{\pd}[2]{\dfrac{\partial #1}{\partial #2}}
\newcommand{\rom}[1]{\uppercase\expandafter{\romannumeral #1\relax}}

\DeclareMathOperator{\Tr}{Tr}
\DeclareMathOperator{\sign}{sign}
\DeclareMathOperator*{\argmax}{arg\,max}

\title[Policy Gradient for Optimal Control]{A Policy Gradient Framework for Stochastic Optimal Control Problems with Global Convergence Guarantee}


\begin{document}
\maketitle
\begin{center}
  \normalsize
  Mo Zhou \footnote{mo.zhou366@duke.edu} \textsuperscript{$\dagger$}, Jianfeng Lu\footnote{jianfeng@math.duke.edu}\textsuperscript{$\dagger \ddagger$} \par \bigskip

  \textsuperscript{$\dagger$}Department of Mathematics, Duke University \par
  \textsuperscript{$\ddagger$} Department of Physics, and Department of Chemistry, Duke University \par \bigskip

  \date{}
\end{center}

\begin{abstract}%
We consider policy gradient methods for  stochastic optimal control problem in continuous time. In particular, we analyze the gradient flow for the control, viewed as a continuous time limit of the policy gradient method. We prove the global convergence of the gradient flow and establish a convergence rate under some regularity assumptions. The main novelty in the analysis is the notion of local optimal control function, which is introduced to characterize the local optimality of the iterate.

\medskip 
\noindent \textbf{Keywords.} Policy gradient, stochastic optimal control, HJB equation, global convergence, controlled diffusion
\end{abstract}

\section{Introduction}
The stochastic optimal control problem is an important field of study and has a wide range of applications, such as financial portfolio investment \cite{pham2009continuous}, manufacturing system management \cite{sethi2002optimal}, climate policy decisions \cite{bahn2008stochastic}, disease control and prevention \cite{lee2022mean}, and multiagent path finding \cite{onken2022neural}, just to name a few. It also has natural connections with machine learning, as it can be viewed as a reinforcement learning problem with continuous time \cite{wang2020reinforcement}. 

Given its importance, extensive research has been devoted to solving the optimal control problem. The majority of them fall into two categories: those based on dynamic programming \cite{bertsekas2012dynamic} or Hamilton--Jacobi--Bellman (HJB) equations \cite{bardi1997optimal}, and those utilizing Pontryagin’s maximum principle \cite{kopp1962pontryagin} or method of successive approximations (MSA) \cite{chernousko1982method}. HJB methods based on conventional numerical discretization, such as finite difference \cite{bonnans2003consistency} and finite elements \cite{kushner1990numerical} face difficulties when the dimensionality of the problem becomes high. For deterministic control problems, the method of characteristic lines could be applied to overcome the curse of dimensionality \cite{ruthotto2020machine}, while it becomes difficult for stochastic control, as the stochastic Pontryagin principle involves the backward SDE \cite{pardoux1992backward}, which is hard to solve numerically. 

In recent years, machine learning based methods have emerged as powerful tools to solve the stochastic optimal control problem in high dimension. A comprehensive overview of such methods can be found in \cite{hu2023recent}. In particular, Ji et al. \cite{ji2022solving} apply the stochastic maximum principle with neural network parametrization. Zhou et al. develop an actor-critic framework and score dynamic to solve the HJB equation through deep learning \cite{zhou2021actor, zhou2024deep}. Sirignano et al. \cite{sirignano2018dgm} use a deep Galerkin method to solve for PDEs such the HJB equation. Min and Hu \cite{min2021signatured} proposed a signatured deep fictitious play method for mean field games based on rough path theory. Zang et al. \cite{zang2022machine} propose a machine learning method to solve the optimal landing problem. Deep learning method can also solve optimal control problems with delay \cite{fouque2020deep, han2021recurrent}.

Despite the great empirical success in solving the optimal control problem, the theoretical studies of such algorithms are still lacking. In this work, we analyze the theoretical properties of a policy gradient method for the optimal control problem in a quite general setting. In particular, our analysis covers stochastic optimal control with controlled diffusion (the diffusion coefficient is part of the control), which leads to  a fully nonlinear HJB equation for the value function,  making the analysis significantly more difficult. Moreover, in order to obtain the exact optimal control, we set the control as a deterministic function of time and state, without entropy regularization.
%
We establish the global convergence of the gradient flow (which can be viewed as a continuous time limit of the policy gradient method) under relatively mild regularity assumptions. 
The proof for the convergence of the gradient flow is based on the construction of a barrier function, which is motivated by the uniqueness theory of the viscosity solution to the HJB equation. We also design a local optimal control function in order to distinguish a key criterion to establish a convergence rate. This local optimal function is crucial for the proof of global convergence as the cost functional is non-convex, so that one cannot apply standard tools in convex optimization.

\subsection{Related works}
Before we present the framework and our results, we summarize a few existing theoretical studies on the optimal control problem that are related to ours.

For the analysis of numerical methods for optimal control, many studies focus on specific settings, such as the linear quadratic regulator (LQR) problem, which is  thoroughly studied thanks to its simple structure. Its optimal control has an explicit expression w.r.t. the value function, which makes the HJB equation semi-linear. Several works have been proposed to analyze the global convergence of a policy gradient method for LQR \cite{wang2021global,giegrich2022convergence}. Gobet and Grangereau \cite{gobet2022newton} also develop a Newton's method for control problems with linear dynamics and show quadratic convergence.

Another well-studied scenario is the control problem with a soft policy to encourage exploration. Such global search makes the convergence analysis more feasible \cite{zhou2021curse}.
For instance, in the context of mean-field games, the convergence of policies to the Nash equilibrium is guaranteed under mild assumptions \cite{domingo2020mean, firoozi2022exploratory, guo2022entropy}. Tang et al. \cite{tang2022exploratory} analyze the property of a class of soft policy algorithms where the rate of exploration decreases to $0$.

In more general settings, a variety of recent works focus on studying the convergence of algorithms. Carmona and Lauri{\`e}re \cite{carmona2021convergence, carmona2022convergence} analyze the approximation errors with neural networks for linear quadratic (LQ) mean-field games and general mean-field games. Kerimkulov et al. \cite{kerimkulov2020exponential} study the convergence and stability of a Howard’s policy improvement algorithm. Ito et al. \cite{ito2021neural} investigate an iterative method with a superlinear convergence rate. However, these studies do not cover controlled diffusion. As Yong and Zhou \cite{yong1999stochastic} point out, the presence of control in diffusion will make the control problem significantly more difficult, even in the LQ scenario.

Regarding the optimal control problem with controlled diffusion, Kerimkulov et al. \cite{kerimkulov2021modified} study the convergence rate of a MSA algorithm in a controlled diffusion setting. Sethi et al. \cite{sethi2022modified} propose a modified MSA method, with a proof for convergence. Reisinger et al. \cite{reisinger2022linear} study the condition for linear convergence of policy gradient method, while our analysis works with much weaker assumptions in comparison. 






\subsection{Organization of the paper}
The rest of the paper is organized as follows. The stochastic control problem is introduced in Section \ref{sec:background}. A policy gradient method to solve the problem is proposed in Section \ref{sec:policy_gradient}. In Section \ref{sec:convergence}, we give a convergence analysis of the algorithm, with sketch of the proofs, and the technique details are deferred to the appendix. We conclude our work and mention some directions of future research in Section \ref{sec:future}. 

\section{Theoretical background: the stochastic optimal control problem}\label{sec:background}
We clarify some notations first. We use $|\cdot|$ to denote the absolute value of a scalar, the $l^2$ norm of a vector, the Frobenius norm of a matrix, or the square root of the square sum of a higher order tensor according to the context. $\norm{\cdot}_{L^1}$ and $\norm{\cdot}_{L^2}$ denote the $L^1$ and $L^2$ norms of a function. $\norm{\cdot}_2$ denotes the $l^2$ operator norm (i.e. the largest singular value) of a matrix. $\inner{\cdot}{\cdot}$ denotes the inner product between two vectors, and $\inner{\cdot}{\cdot}_{L^2}$ denotes the inner product between two $L^2$ functions. $\Tr(\cdot)$ denotes the trace of a squared matrix.

In this work, we consider the optimal control problem on state space $\mX$ during a time period $t \in [0,T]$. For simplicity we will assume $\mX$ is an $n$-dimensional unit flat torus, i.e., $\mX = [0,1]^n$ with periodic boundary condition. Without loss of generality, we assume the control variable lies in $\mathbb{R}^{n'}$. 
Let $\left(\Omega, \mF, \{\mF_t\}, \PP \right)$ be a filtered probability space. Let $x_t$ be the state trajectory in $\mX$ satisfying the stochastic differential equation (SDE)
\begin{equation}\label{eq:SDE_X}
\rd x_t = b(x_t, u_t) \rd t + \sigma(x_t, u_t) \rd W_t,
\end{equation}
where $b(x,u): \mX \times \RR^{n'} \to \RR^{n}$ and $\sigma(x,u): \mX \times \RR^{n'} \to \RR^{n \times m}$ are the drift and diffusion coefficients, $u_t \in \RR^{n'}$ is an $\mF_t$-adapted control process, and $W_t$ is an $m$-dimensional $\mF_t$-Brownian motion. With a slight abuse of notation, the letter $u$ may refer to the control process or a vector in $\RR^{n'}$. The initial point $x_0$ is uniformly distributed in $\mX$ unless otherwise specified. Throughout the paper, we assume that the matrix valued function
$$D(x,u) := \frac12 \sigma(x,u) \sigma(x,u)\tp \in \RR^{n \times n}$$
is uniformly elliptic with minimum eigenvalue $\lambda_{\min}(x,u) \ge \sigma_0 > 0$. The goal of the optimal control problem is to minimize the cost functional
\begin{equation}\label{eq:cost}
    J[u] = \EE\sqbra{\int_{0}^{T} r(x_t, u_t) \,\rd t + h(x_T)}
\end{equation}
over all adapted controls, where $r(x,u): \mX \times \RR^{n'} \to \RR$ is the running cost and $h(x): \mX \to \RR$ is the terminal cost. To study the optimal control problem, the value function is very important and useful, which is defined as the expected cost if we start at a certain time and location:
\begin{equation}\label{eq:value}
    V_u(t,x) = \EE\sqbra{\int_{t}^{T} r(x_s, u_s) \,\rd s + h(x_T) ~\Big|~ x_t=x}.
\end{equation}
Here, the subscript $u$ indicates $V_u$ is the value function w.r.t. the control process $u$. By the Markov property, we can verify that $V_u(t, x)$ satisfies the Bellman equation
\begin{equation*}
    V_u(t_1,x) = \EE \sqbra{\int_{t_1}^{t_2} r(x_t, u_t) \,\rd t + V_u(t_2, x_{t_2}) ~\Big|~ x_{t_1} = x}
\end{equation*}
for any $0 \le t_1 \le t_2 \le T$ and $x \in \mX$.

The existence of an optimal control that minimizes \eqref{eq:cost} is well-studied (see e.g., \cite{yong1999stochastic}). In this work, we assume the optimal control exists and we denote $u^*_t$ the optimal control, $x^*_t$ the optimal state process, and $V^*(t, x) = V_{u^*}(t,x)$ the optimal value function.
By the dynamic programming principle \cite{fleming2012deterministic},
\begin{equation*}
    V^*(t_1, x) = \inf_{u} \EE\sqbra{\int_{t_1}^{t_2} r(x_t, u_t) \,\rd t + V^*(t_2, x_{t_2}) ~\Big|~ x_{t_1}=x},
\end{equation*}
where the infimum is taken over all the adapted controls that coincide with $u^*$ in $[0,t_1] \cup [t_2,T]$.
This dynamic programming principle informs that the optimal solution from $t_1$ to $T$ can be obtained if we optimize the control from $t_1$ to $t_2$ w.r.t. the loss $\EE[\int_{t_1}^{t_2} r(x_t, u_t) \,\rd t + V^*(t_2, x_{t_2})]$ and apply $u^*$ after $t_2$.
Based on this principle, let $t_2 \to t_1$, we see that the optimal control $u^*$ at time $t_1$ is a deterministic function of the state $x_{t_1}$, at least heuristically. For a rigorous argument, we refer the reader to the verification theorem (see, for example, \cite[Section 5.3.5]{yong1999stochastic}). Therefore, we will only consider the control as a function of $t$ and $x$ and we will use the shorthand $u_t$ for $u(t, x_t)$ whenever there is no confusion. The objective function becomes
\begin{equation*}
    J[u] = \EE\sqbra{\int_{0}^{T} r(x_t, u(t, x_t)) \,\rd t + h(x_T)},
\end{equation*}
where $u: [0, T] \times \mathcal{X} \to \RR^{n'}$ belongs to a class of admissible control functions, to be specified later.

To simplify notation, we will denote $\rho^{u}(t,x)$ the density of $x_t$ under control $u$,  then $\rho^{u}$ satisfies the Fokker--Planck equation (see e.g., \cite{risken1996fokker})
\begin{equation}\label{eq:FokkerPlanck}
    \partial_t \rho^u(t,x) = -\nx \cdot \sqbra{b(x, u(t,x)) \rho^u(t,x)} + \sum_{i,j=1}^n \partial_i \partial_j \sqbra{D_{ij}(x, u(t,x)) \rho^u(t,x)},
\end{equation}
where we denote $\partial_i = \partial_{x_i}$ for simplicity (recall $D = \frac12 \sigma \sigma\tp$). The initial condition $\rho^u(0,\cdot)$ is the density of $x_0$. For example, $\rho^u(0,\cdot) \equiv 1$ if $x_0 \sim \text{Unif}(\mX)$ and $\rho^u(0,\cdot) = \delta_{x_0}$ if $x_0$ is deterministic. If we denote $\mG_{u}$ the infinitesimal generator of the SDE \eqref{eq:SDE_X} under control $u$, the Fokker--Planck equation can be written as $\partial_t \rho^u = \mG_u^{\dagger} \rho^u$, where $\mG_u^{\dagger}$ is the adjoint operator of $\mG_{u}$. 

One important tool to study the optimal control problem is the adjoint equation. We introduce the adjoint state $p_t = -\nx V_u(t,x_t)$ (also known as the shadow price \cite{aubin1979shadow}) and also $q_t = - \nx^2 V_u(t,x_t) \sigma(x_t, u_t)$. Then, $(p_t, q_t) \in \RR^n \times \RR^{n \times m}$ is the unique solution to the following backward stochastic differential equations (BSDE) \cite{pardoux1992backward}
\begin{equation}\label{eq:adjoint1}
\left\{ \begin{aligned}
    \rd p_t & = -\sqbra{\nx b(x_t,u_t)\tp p_t + \nx \Tr\parentheses{\sigma(x_t,u_t)\tp q_{t}} - \nx r(x_t,u_t)} \rd t + q_t ~ \rd W_t \\
    & = -\nx H\parentheses{t,x_t,u_t,p_t,q_t} \rd t + q_t ~ \rd W_t \\
    p_T & = -\nx h(x_T),
\end{aligned} \right.
\end{equation}
known as the first order adjoint equation. Here,
\begin{equation*}
H(t,x,u,p,q) := \Tr\parentheses{q\tp \sigma(x,u)} + \inner{p}{b(x,u)} - r(x,u)
\end{equation*}
is the Hamiltonian. We also define the generalized Hamiltonian as
\begin{equation}\label{eq:generalized_Hamiltonian}
G(t,x,u,p,P) := \frac12 \Tr\parentheses{P \sigma(x,u) \sigma(x,u)\tp} + \inner{p}{b(x,u)} - r(x,u),
\end{equation}
where $P \in \RR^{n\times n}$ is the argument for the negative Hessian of value functions.
Then, using It\^o's calculus, we can show that the value function $V_u$ in \eqref{eq:value} is the solution to the Hamilton--Jacobi (HJ) equation
\begin{equation}\label{eq:HJ}
\left\{ \begin{aligned}
    -\partial_t V_u(t,x) + G\parentheses{t, x, u(t,x), -\nx V_u(t,x), -\nx^2 V_u(t,x)} & = 0 \\
    V_u(T, x) & = h(x).
\end{aligned} \right.
\end{equation}
Moreover, the optimal value function $V^*$ satisfies the HJB equation
\begin{equation}\label{eq:HJB}
\left\{ \begin{aligned}
    -\partial_t V^*(t,x) + \sup_u G\parentheses{t, x, u, -\nx V^*(t,x), -\nx^2 V^*(t,x)} & = 0 \\
    V^*(T, x) & = h(x).
\end{aligned} \right.
\end{equation}
Therefore, one necessary condition for a control $u$ to be optimal is that 
\begin{equation}\label{eq:max1}
u(t,x) = \argmax_{u' \in \RR^{n'}} G\parentheses{t, x, u', -\nx V_u(t,x), -\nx^2 V_u(t,x)}
\end{equation}
for all $(t,x) \in [0,T] \times \mX$. Note that on the right-hand side of \eqref{eq:max1}, the optimization is only with respect to the third argument of the generalized Hamiltonian (and thus $V_u$ is fixed). In particular, this is a local condition imposed at each $(t, x)$. Since the diffusion function $\sigma$ involves control $u$, this HJB equation is fully nonlinear. The existence and uniqueness of the (viscosity) solution of HJB equation is well established (see \cite{bardi1997optimal} for example). To simplify notation, we will often use the shorthand  $G(t, x, u, -\nx V, -\nx^2 V)$ for $G(t, x, u, -\nx V(t,x), - \nx^2 V(t,x))$ in the rest of the paper.  We define the $H^2([0,T];\mX)$ norm of a value function $V(t,x)$ by
\begin{equation*}
\norm{V}_{(T,H^2)}^2 := \int_0^T \int_{\mX} \parentheses{\abs{V(t, x)}^2 + \abs{\nx V(t, x)}^2 + \abs{\nx^2 V(t, x)}^2} \rd x \, \rd t. 
\end{equation*}

\section{A policy gradient method for the control problem}\label{sec:policy_gradient}

Policy gradient is one of the most popular methods for  reinforcement learning problems \cite{sutton1999policy}. It updates parametrized policy using gradient based method such as gradient descent. This method also applies to the control problem with continuous time \cite{munos2006policy}, which can be viewed as a instantaneous but local dynamic programming method through gradient descent. In order to design a policy gradient method, we present the following proposition, which gives an explicit expression for the derivative of the cost functional \eqref{eq:cost} w.r.t. the control function.
\begin{proposition}\label{prop:cost_derivative}
Let $u$ be a control function and $V_u$ be the corresponding value function. Let the state process $x_t$ start with uniform distribution on $\mX$ and follow the SDE \eqref{eq:SDE_X} with control $u$. Then we have
\begin{equation}\label{eq:actor_derivative}
\fd{J}{u}(t, x)  = - \rho^u(t,x) ~\nbu G\parentheses{t, x, u(t, x), -\nx V_u, -\nx^2 V_u },
\end{equation}
where $\fd{J}{u}$ denotes the functional derivative of $J$ w.r.t.{} $u$, and $\nbu G$ denotes the gradient of $G$ w.r.t.{} its third argument (as a vector in $\RR^{n'}$); furthermore as a generalization,
\begin{equation}\label{eq:dVdu}
\fd{V_u(s,y)}{u}(t, x)  = - \mathbbm{1}_{\{t \ge s\}} \, p^u(t,x;s,y) ~\nbu G\parentheses{t, x, u(t, x), -\nx V_u, -\nx^2 V_u }
\end{equation}
for all $x,y \in \mX$, where $p^u(t,x;s,y)$ is the fundamental solution to the Fokker--Planck equation \eqref{eq:FokkerPlanck}. 
\end{proposition}

\begin{rmk}
We remark that a similar result is established in \cite[Proposition 2]{carmona2021convergence} under a different setting. That work considers infinite horizon mean-field games with constant diffusion coefficient, while our setting is finite horizon stochastic control problem with controlled diffusion.
\end{rmk}

Motivated by Proposition \ref{prop:cost_derivative},
we consider a continuous-time version of the policy gradient, i.e., the gradient flow of $J$ with respect to $u$. Let us recall the gradient descent method in discrete time first. Using $\tau_k$ as time steps for the policy gradient, for any $(t,x) \in [0,T] \times \mX$, the gradient descent method is 
\begin{equation}\label{eq:PG_discrete}
\begin{aligned}
u^{\tau_{k+1}}(t,x) & = u^{\tau_k}(t,x) - \Delta \tau_k \fd{J}{u^{\tau_k}}(t, x) \\
& = u^{\tau_k}(t,x) + \Delta \tau_k ~ \rho^{u^{\tau_k}}(t,x) ~\nbu G\parentheses{t, x, u^{\tau_k}(t, x), -\nx V_{u^{\tau_k}}, -\nx^2 V_{u^{\tau_k}}},
\end{aligned}
\end{equation}
where $\Delta \tau_k = \tau_{k+1} - \tau_k$ is the step size and $V_{u^{\tau_k}}$ is the value function w.r.t. the control $u^{\tau_k}$.
Directly implementing the gradient descent \eqref{eq:PG_discrete} is expensive, as it would involve solving the FP equation for $\rho^{u^{\tau_k}}$. In practice, therefore, we can instead consider a stochastic version of gradient descent that updates the control function through
\begin{equation}\label{eq:PG_discrete_sgd}
\begin{aligned}
u^{\tau_{k+1}}(t,X_t) 
& = u^{\tau_k}(t,X_t) + \Delta \tau_k \nbu G\parentheses{t, X_t, u^{\tau_k}(t, X_t), -\nx V_{u^{\tau_k}}, -\nx^2 V_{u^{\tau_k}}},
\end{aligned}
\end{equation}
where $X_t \sim \rho^{u^{\tau_k}}(t, \cdot)$. Note that on average the update of the stochastic gradient is the same with \eqref{eq:PG_discrete}.
Then, trajectories for $X_t \sim \rho^{u^{\tau_k}}(t,\cdot)$ could be sampled using Euler-Maruyama scheme
\begin{equation}\label{eq:EulerMaruyama}
x_{i+1} = x_i + b(x_i, u^{\tau_k}(t_i,x_i)) \Delta t + \sigma(x_i, u^{\tau_k}(t_i,x_i)) \sqrt{\Delta t} \, \xi
\end{equation}
with $x_0 \sim \text{Unif}(\mX)$ and $\xi \sim N(0,I_m)$ to approximate the SDE \eqref{eq:SDE_X} numerically. Here $\Delta t = T / N$ is the time step size and $t_i = i \Delta t$ ($i=0,1,\ldots,N$). 
Multiple numerical trajectories can be sampled from \eqref{eq:EulerMaruyama}, which provides a set
$$\mS_k := \{(t_i,x_i^{(j)}) ~|~ j \ge 1, ~ i=0,1,\ldots N-1 \},$$
where $j$ is the index of samples.
In practice, under some parametrization of the control function $u(t,x;\theta)$,\footnote{This parametrization could be grid, finite element, or neural network (see \cite{han2020solving} for an example of neural network with periodic boundary condition).} where $\theta$ denotes the collective parameters, the stochastic update \eqref{eq:PG_discrete_sgd} can be further approximated through a lease square fitting
$$\min_{\theta_{k+1}} \sum_{(t,x) \in \mS_k} \abs{u(t,x;\theta_{k+1}) - u(t,x;\theta_k) - \Delta \tau_k \, \nbu G(t,x,u_k(t,x),-\nx V_{u_k}, -\nx^2 V_{u_k})}^2,$$
where $u_k := u(\cdot,\cdot;\theta_k)$.


There are several interesting topics arising from this scheme, including
\begin{enumerate}
\item 
The consistency between the algorithm above and the continuous gradient flow to be introduced next.
\item The theoretical analysis for the neural network parametrization, i.e., the approximation error. 
\item Designing practical numerical experiments to train and optimize the neural network.
\item Conducting theoretical analysis for the gradient flow with continuous time.
\end{enumerate}
\smallskip 

In this work, we will focus on theoretical analysis for the gradient flow of functions in continuous time and leave other important topics for future research. Let us return to the continuous dynamic. Let $\Delta \tau = \max_k \Delta \tau_k \to 0$, the update scheme of the control \eqref{eq:PG_discrete} converges to the gradient flow in continuous time
\begin{equation}\label{eq:PG_ideal}
\dfrac{\rd}{\rd \tau} u^{\tau}(t, x) = - \fd{J}{u^{\tau}}(t, x) = \rho^{u^{\tau}}(t,x) ~\nbu G\parentheses{t, x, u^{\tau}(t, x), -\nx V_{u^{\tau}}, -\nx^2 V_{u^{\tau}}}.
\end{equation}
By the definition of value function \eqref{eq:value} and Proposition \ref{prop:cost_derivative}, we can show that $V_{u^{\tau}}(t,x)$ is decreasing in $\tau$ (for fixed $t$ and $x$).

\begin{proposition}[Monotonicity of value function in $\tau$]\label{prop:value_decreasing}
Under the policy gradient dynamic \eqref{eq:PG_ideal}, the value function $V_{u^{\tau}}(t,x)$ is decreasing in $\tau$ for all $(t,x) \in [0,T] \times \mX$.
\end{proposition}

While our main focus for the paper is the analysis of the policy gradient flow \eqref{eq:PG_ideal}, we remark that a practical algorithm also requires the derivatives of value function $\nx V_u$ and $\nx^2 V_u$ for a given control, which reduces to solving the linear parabolic HJ equation \eqref{eq:HJ}. Computing the value function with given policy is called policy evaluation  in reinforcement learning \cite{heckman1992policy}. Such structure naturally motivates us to use the actor-critic framework \cite{konda1999actor}, where the policy gradient and policy evaluation are operated jointly. In our setting, the gradient flow \eqref{eq:PG_ideal} can be viewed as the limit of a two time-scale actor-critic method \cite{wu2020finite}, where the speed of policy evaluation is infinitely faster than policy gradient. While it is possible to extend our analysis to the actor-critic method, we choose to focus on the dynamic \eqref{eq:PG_ideal} in this paper and leave that to future works.

\section{Convergence of the policy gradient}\label{sec:convergence}
In this section, we give the main results of this work on convergence of the policy gradient. First, we give some technical assumptions
\begin{assump}\label{assump:basic}
Assume the followings hold.
\begin{enumerate}
\item $r$, $b$, and $\sigma$ are smooth, with $C^{4}(\mX \times \RR^{n'})$ norm and fourth order H\"older norm bounded by some constant $K$. 
\item $h$ is smooth with $C^{4}(\mX)$ norm and fourth order H\"older norm bounded by $K$.
\item There exists a unique solution $V^* \in C^{1,2}([0,T]; \mX)$ to the HJB equation \eqref{eq:HJB} in the classical sense. The optimal control function $u^*$ is smooth and 
$$\norm{u^*}_{C^{2,4}([0,T];\mX)} \le K.$$
\end{enumerate}
\end{assump}

In order to avoid tedious technicality and focus on the main ideas of the analysis, we also make a regularity assumption on the control function through the gradient flow \eqref{eq:PG_ideal}.
\begin{assump}\label{assump:u_smooth}
The control function $u^{\tau}$ remains smooth through the gradient flow \eqref{eq:PG_ideal}, and there exists a constant $K$ such that $\norm{u^{\tau}}_{C^{2,4}([0,T]; \mX)} \le K$.
\end{assump}

While smoothness is inherently preserved by the policy gradient dynamic, the boundedness is a technical assumption that does not directly follow from standard energy estimates. In Theorem \ref{thm:actor_rate}, we will show a convergence rate for the policy gradient flow through establishing a Polyak-{\L}ojasiewicz (PL) condition \cite{karimi2016linear}, which means that the policy gradient dynamics are guided towards the optimal control, effectively avoiding regions that could lead to unbounded behavior.

Let us define a set 
$$\mU = \curlybra{u(t,x) ~\Big|~ u \text{ is smooth and} \norm{u}_{C^{2,4}([0,T]; \mX)} \le K} $$
to include all the regular control functions we consider. 
We make a few remarks about the assumptions.
\begin{enumerate}
\item[-] It follows from definition that $D$ is also smooth, and we also use $K$ to denote its $C^{4}(\mX \times \RR^{n'})$ bound. Since the control function $u(t,x) \in \mU$ is bounded, we just need $r,b,\sigma,D$ has bounded derivative when the input vector $u \in \RR^{n'}$ is within this bounded range. Similar boundedness assumptions are very common, such as in \cite{vsivska2020gradient}.
\item[-] When Assumption \ref{assump:basic} holds and $u \in \mU$, we have solution $V_u \in C^{1,2}([0,T]; \mX)$ to the HJ equation \eqref{eq:HJ} in classical sense.
\item[-] Regarding the third condition in Assumption 1, the existence and uniqueness of viscosity solution is well established under mild assumption \cite{bardi1997optimal}. For the boundedness of $\norm{V^*}_{C^{1,2}([0,T],\mX)}$, see e.g.{} \cite[Theorem 5.3]{mou2019remarks}. This smoothness further implies that $V^*$ is a classical solution to the HJB equation.

\item[-] When Assumption \ref{assump:basic} holds and $u \in \mU$, we know from Schauder estimate \cite{ladyvzenskaja1988linear} that $V_u$ has fourth order derivative in $x$. So, $V_u \in C^{1,4}([0,T]; \mX)$. Then, we observe that $G\parentheses{t, x, u(t,x), -\nx V_u(t,x), -\nx^2 V_u(t,x)}$ in \eqref{eq:HJ} is differentiable in $t$, which implies $V_u \in  C^{2,4}([0,T]; \mX)$. We will also use $K$ to denote the bound for $C^{2,4}([0,T]; \mX)$ norm of $V_u$. The same argument also holds for $V^*$, so $\norm{V^*}_{C^{2,4}} \le K$. 

\item[-] We assume boundedness of $r$, $b$, and $\sigma$ in Assumption \ref{assump:basic} as the state space $\mX$ is compact. In the setting with unbounded state space such as $\RR^n$, the common assumption is that the functions grows at most linearly in $\abs{x}$ \cite{yong1999stochastic}.

\item[-] In Assumption \ref{assump:basic}, the bounded derivative implies a Lipschitz condition. For example, if $\abs{\nx b} \le K$, then $\abs{b(x_1 ,u) - b(x_2 ,u)} \le K \abs{x_1 - x_2}$. In the proofs, we will use $L$ instead of $K$ whenever we use the Lipschitz condition, in order to be more reader friendly.

\item[-] The assumptions are weaker than those of \cite{reisinger2022linear}, where it is required that the running cost is sufficiently convex or the time span $[0,T]$ is sufficiently short.
\end{enumerate}

\smallskip 

Under these assumptions, we have a lower bound and an upper bound for the density function on the compact space $\mX$.
\begin{proposition}\label{prop:rho}
Under Assumption \ref{assump:basic}, let $u \in \mU$ and $\rho^u$ be the solution to the Fokker--Planck equation \eqref{eq:FokkerPlanck} with initial condition $\rho^u(0,x) \equiv 1$. Then $\rho^u(t,x)$ has a positive lower bound $\rho_0$ and an upper bound $\rho_1$ that only depend on $n$, $T$, and $K$.
\end{proposition}

The proof of this proposition can be found in Appendix \ref{sec:props}.

We now state convergence results of the gradient flow \eqref{eq:PG_ideal}. We give a warm-up theorem about its critical point.
\begin{theorem}[Critical point for policy gradient]
Under Assumption \ref{assump:basic}, assume further that $G$ is strongly concave in $u$. Then, any critical point of the gradient flow \eqref{eq:PG_ideal} is the optimal control.
\end{theorem}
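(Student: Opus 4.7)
The plan is to exploit the definition of a critical point together with the strong concavity of $G$ to show that $V_u$ solves the HJB equation, and then invoke uniqueness of the HJB solution to conclude $u = u^*$. First, I would observe that if $u$ is a critical point of the gradient flow \eqref{eq:PG_ideal}, then for all $(t,x) \in [0,T] \times \mX$ we must have
\begin{equation*}
\rho^u(t,x) \, \nbu G\bigl(t, x, u(t,x), -\nx V_u, -\nx^2 V_u\bigr) = 0.
\end{equation*}
Since Proposition \ref{prop:rho} guarantees $\rho^u(t,x) \ge \rho_0 > 0$ under Assumption \ref{assump:basic}, the density factors out and we are left with $\nbu G(t, x, u(t,x), -\nx V_u, -\nx^2 V_u) = 0$ pointwise.

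Next I would use strong concavity of $G$ in its $u$-argument: a critical point of a strongly concave function is its unique global maximizer, so
\begin{equation*}
u(t,x) = \argmax_{v \in \RR^{n'}} G\bigl(t, x, v, -\nx V_u(t,x), -\nx^2 V_u(t,x)\bigr),
\end{equation*}
and hence $G(t,x,u(t,x),-\nx V_u,-\nx^2 V_u) = \sup_{v} G(t,x,v,-\nx V_u,-\nx^2 V_u)$. Plugging this identity into the HJ equation \eqref{eq:HJ} satisfied by $V_u$ shows that $V_u$ in fact satisfies
\begin{equation*}
-\pt V_u(t,x) + \sup_{v} G\bigl(t, x, v, -\nx V_u, -\nx^2 V_u\bigr) = 0, \quad V_u(T,x) = h(x),
\end{equation*}
which is precisely the HJB equation \eqref{eq:HJB}. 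Under Assumption \ref{assump:basic} we have $V_u \in C^{1,2}([0,T];\mX)$ (as noted in the remarks following the assumptions), so the classical/viscosity uniqueness theory for the HJB equation gives $V_u = V^*$.

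Finally, having identified $V_u = V^*$, the pointwise identity $u(t,x) = \argmax_v G(t,x,v,-\nx V^*,-\nx^2 V^*)$ and a second appeal to strong concavity (which makes the maximizer unique) yield $u(t,x) = u^*(t,x)$ via the verification theorem, i.e. $u$ is an optimal control. I do not foresee a substantial obstacle here since the three ingredients (positivity of $\rho^u$, uniqueness of the HJB solution in the classical sense, and uniqueness of the maximizer from strong concavity) are all in place; the only subtlety worth double-checking is the regularity needed to invoke HJB uniqueness in the classical rather than viscosity sense, which is handled by the Schauder-type remark after Assumption \ref{assump:u_smooth}.
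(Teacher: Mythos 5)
Your proposal is correct and follows essentially the same route as the paper's own proof: positivity of $\rho^u$ from Proposition \ref{prop:rho} removes the density factor, strong concavity of $G$ promotes the first-order condition $\nbu G = 0$ to the maximum condition \eqref{eq:max1}, so $V_u$ solves the HJB equation \eqref{eq:HJB}, and uniqueness of the HJB solution together with uniqueness of the argmax then identifies $u$ with $u^*$. The only difference is cosmetic: you spell out the final passage from $V_u = V^*$ to $u = u^*$ via the verification theorem slightly more explicitly than the paper does.
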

Similar to the remark under Assumptions \ref{assump:basic} and \ref{assump:u_smooth} above, it would be sufficient to only require the concavity of $G$ when $p = -\nx V_u$ and $P = -\nx^2 V_u$ are within a bounded range given by the Schauder estimates.
\begin{proof}
Let $u$ be a critical point of \eqref{eq:PG_ideal}. Then since $\rho^u$ is not vanishing by Proposition \ref{prop:rho} (lower bound), we have
$$\nbu G\parentheses{t, x, u(t, x), -\nx V_u, -\nx^2 V_u} = 0.$$
Since $G$ is strongly concave in $u$, the control function $u(t,x)$ satisfies the maximum condition \eqref{eq:max1}. Therefore, $V_u$ is not only the solution to the HJ equation \eqref{eq:HJ}, but also the solution to the HJB equation \eqref{eq:HJB}. Then, by the uniqueness of HJB equation, $u(t,x)$ is the optimal control.
\end{proof}

In order to explain the necessity of the strong concavity assumption of $G$ in $u$, we give a counter-example in Appendix \ref{sec:counter_example}, showing that there are multiple critical points of the policy gradient dynamics \eqref{eq:PG_ideal} when the concavity assumption does not hold. It is also clear that the commonly studied LQR problem satisfies this strong concavity assumption \cite{wang2021global}. We shall emphasize that this concavity assumption does not imply that the optimal control problem \eqref{eq:cost} is convex in $u$. In fact the cost functional is in general non-convex.

Next, we state our main results, establishing the convergence guarantee of  gradient flow \eqref{eq:PG_ideal}.
\begin{theorem}[Convergence of the policy gradient]\label{thm:actor_converge}
Let Assumptions \ref{assump:basic} and \ref{assump:u_smooth} hold. Further assume that $G$ is uniformly strongly concave in $u$. Then, the gradient flow \eqref{eq:PG_ideal} of control satisfies
\begin{equation}\label{eq:actor_convergence}
\lim_{\tau \to \infty} J[u^{\tau}] = J[u^*].
\end{equation}
\end{theorem}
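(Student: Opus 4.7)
The plan is to combine a monotone-decreasing energy identity with a subsequential compactness argument, and then use strong concavity of $G$ together with uniqueness of the HJB solution to identify the subsequential limit with the optimal control.

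First I would show that $\tau \mapsto J[u^\tau]$ is decreasing and bounded below by $J[u^*]$, hence converges to some $J_\infty \ge J[u^*]$; the task then reduces to ruling out $J_\infty > J[u^*]$. Combining Proposition \ref{prop:cost_derivative} with the gradient flow \eqref{eq:PG_ideal} gives the energy identity
\[
\frac{\rd}{\rd \tau} J[u^\tau] = -\int_0^T\!\int_\mX \parentheses{\rho^{u^\tau}(t,x)}^2 \abs{\nbu G\parentheses{t,x,u^\tau,-\nx V_{u^\tau},-\nx^2 V_{u^\tau}}}^2 \rd x\, \rd t.
\]
Integrating in $\tau$ and invoking the lower bound $\rho^{u^\tau} \ge \rho_0 > 0$ of Proposition \ref{prop:rho} yields $\rho_0^2 \int_0^\infty \norm{\nbu G^\tau}_{L^2([0,T]\times \mX)}^2 \rd \tau \le J[u^0] - J[u^*] < \infty$. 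In particular there is a sequence $\tau_k \to \infty$ along which $\nbu G^{\tau_k} \to 0$ in $L^2([0,T]\times\mX)$.

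Second, I would extract a strong subsequential limit. Assumption \ref{assump:u_smooth} supplies a uniform $C^{2,4}([0,T]\times\mX)$ bound on $u^{\tau_k}$, and Schauder estimates applied to the linear HJ equation \eqref{eq:HJ} (as in the remark following Assumption \ref{assump:u_smooth}) supply a uniform $C^{2,4}$ bound on $V_{u^{\tau_k}}$. By Arzel\`a--Ascoli, after passing to a further subsequence I may assume $u^{\tau_k} \to u^\infty$ in $C^{2,3}$ and $V_{u^{\tau_k}} \to V^\infty$ in $C^{1,3}$. Continuous dependence for the linear HJ equation with smooth coefficient identifies $V^\infty = V_{u^\infty}$, and continuity of $\nbu G$ in its arguments combined with the $L^2$ vanishing of the residual forces the pointwise identity $\nbu G(t,x,u^\infty,-\nx V_{u^\infty},-\nx^2 V_{u^\infty}) = 0$ on $[0,T]\times\mX$.

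Finally, uniform strong concavity of $G$ in $u$ upgrades this critical-point equation: vanishing gradient plus strict concavity forces $u^\infty(t,x)$ to be the unique maximizer of $G$, which is exactly the necessary condition \eqref{eq:max1}. Hence $V_{u^\infty}$ solves \eqref{eq:HJB}, uniqueness of the classical HJB solution (Assumption \ref{assump:basic}) gives $V_{u^\infty} = V^*$, and therefore $J[u^{\tau_k}] \to J[u^\infty] = J[u^*]$; the monotone convergence of $J[u^\tau]$ then forces $J_\infty = J[u^*]$. The step I expect to be most delicate is coordinating the compactness with the passage to the limit in step two: one must arrange a single subsequence on which $u^{\tau_k}$, $\nx V_{u^{\tau_k}}$, and $\nx^2 V_{u^{\tau_k}}$ all converge strongly enough to pass continuously into the nonlinearity $\nbu G$, while simultaneously the $L^2$ residual vanishes. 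The uniform higher-order bounds supplied by Assumptions \ref{assump:basic} and \ref{assump:u_smooth} are doing essential work here; without them the limit identification could fail.
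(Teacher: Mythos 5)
Your argument is correct but takes a genuinely different route from the paper's. The paper proves Theorems \ref{thm:actor_converge} and \ref{thm:actor_rate} simultaneously by a dichotomy on the local-optimality gap $\norm{u^\tau - u^{\tau\diamond}}_{L^2}$ relative to $\norm{u^\tau - u^*}_{L^2}$; in the hard case it identifies the monotone pointwise limit $V_\infty$ with $V^*$ by a doubling-of-variables barrier-function argument (Lemma \ref{lem:long}) borrowed from viscosity-solution uniqueness theory, and that construction carries most of the technical weight. You avoid both the dichotomy and the auxiliary control $u^\diamond$ entirely: the dissipation identity together with the density lower bound of Proposition \ref{prop:rho} make $\int_0^\infty \norm{\nbu G^\tau}_{L^2}^2 \,\rd\tau$ finite, so the $L^2$ residual vanishes along some $\tau_k\to\infty$, and the uniform $C^{2,4}$ bounds from Assumption \ref{assump:u_smooth} and Schauder theory give Arzel\'a--Ascoli compactness to a classical limit $(u^\infty, V_{u^\infty})$ on which the residual vanishes pointwise; strong concavity of $G$ and HJB uniqueness then pin $V_{u^\infty}=V^*$, and monotonicity of $J[u^{\tau}]$ upgrades subsequential to full convergence. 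Your route is shorter and more elementary. What the paper's barrier-function route buys is a proof that isolates the exact intermediate claim $V_\infty\equiv V^*$ reused in the rate theorem, and that is structured so it might survive relaxation to viscosity solutions (a future direction the paper flags), whereas your Arzel\'a--Ascoli step leans essentially on the classical $C^{2,4}$ control and would not relax easily. One minor slip: a uniform $C^{2,4}$ bound gives Arzel\'a--Ascoli compactness in $C^{1,3}$, not $C^{2,3}$; this is harmless since you only need enough uniform convergence to pass continuously into $\nbu G$, which reads $u$, $\nx V_u$, and $\nx^2 V_u$.
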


Here, by uniformly strongly concave, we mean that there exists an absolute constant $\mu_G > 0$ such that the family of functions $G(t,x,\cdot,p,P)$ is $\mu_G-$strongly concave for all $(t,x,p,P)$ within the range given by the Schauder estimates above.
Given Theorem \ref{thm:actor_converge}, one natural question is whether one can establish a convergence rate for the policy gradient. The answer is yes, with a mild extra assumption to avoid flatness.
\begin{assump}\label{assump:actor_rate}
There exists a modulus of continuity $\omega: [0,\infty) \to [0,\infty)$ such that
$$\norm{u-u^*}_{L^2} \le \omega(J[u] - J[u^*])$$
for any $u \in \mU$. Here $u^*$ is the optimal control.
\end{assump}

With this assumption, Theorem \ref{thm:actor_converge} guarantees us that $\norm{u^{\tau}-u^*}_{L^2} \to 0$ as $\tau \to \infty$. Therefore, we just have to analyze when $u^\tau$ is sufficient close to $u^*$ in order to get a global convergence rate. 
\begin{theorem}[Convergence rate of the policy gradient]\label{thm:actor_rate}
Let Assumptions \ref{assump:basic}, \ref{assump:u_smooth}, and \ref{assump:actor_rate} hold. Further assume that $G$ is uniformly strongly concave in $u$. Then, the gradient flow \eqref{eq:PG_ideal} of control satisfies
\begin{equation}\label{eq:actor_rate}
J[u^{\tau}] - J[u^*] \le e^{-c \tau} \parentheses{J[u^0] - J[u^*]}
\end{equation}
for some positive constant $c$. This $c$ depends on $n$, $m$, $n'$, $K$, and $\omega$. As a direct corollary, $\norm{u^{\tau}-u^*}_{L^2} \to 0$. 
\end{theorem}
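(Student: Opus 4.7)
The plan is to establish a Polyak--{\L}ojasiewicz (PL) type inequality $J[u]-J[u^*]\le C\,\norm{\fd{J}{u}}_{L^2}^2$ uniformly for $u\in\mU$, apply it to the gradient flow \eqref{eq:PG_ideal}, for which $\frac{\rd}{\rd\tau}J[u^\tau]=-\norm{\fd{J}{u^\tau}}_{L^2}^2$, to obtain exponential decay of $J[u^\tau]-J[u^*]$ via Gr\"onwall, and then pass from this decay to the $L^2$ convergence of $u^\tau$ to $u^*$ using Assumption \ref{assump:actor_rate}.

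The heart of the argument is the PL inequality. Set $\tilde W(t,x):=V^*(t,x)-V_u(t,x)$, so $\tilde W(T,\cdot)\equiv 0$ and $\tilde W\le 0$ by optimality of $V^*$. Subtracting \eqref{eq:HJ} from \eqref{eq:HJB}, and using that $G$ is linear in $(p,P)$, a direct calculation gives
\begin{equation*}
\pt \tilde W + \mL_{u^*}\tilde W \;=\; G\!\left(t,x,u^*,-\nx V_u,-\nx^2 V_u\right) - G\!\left(t,x,u,-\nx V_u,-\nx^2 V_u\right),
\end{equation*}
where $\mL_{u^*}=b(x,u^*)\cdot\nx+\frac12\Tr(\sigma(x,u^*)\sigma(x,u^*)\tp\nx^2\cdot)$ is the generator of the SDE under $u^*$. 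Introducing the local optimal control function $u^{\mathrm{loc}}(t,x):=\argmax_v G(t,x,v,-\nx V_u,-\nx^2 V_u)$, uniform strong concavity of $G$ in $u$ yields $G(u^*)-G(u)\le G(u^{\mathrm{loc}})-G(u)\le \frac{1}{2\mu_G}\abs{\nbu G(t,x,u,-\nx V_u,-\nx^2 V_u)}^2$, bounding the right-hand side above by $(2\mu_G)^{-1}\abs{\nbu G}^2$.

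Next, I apply Feynman--Kac to $\tilde W$ along $\rd y_s=b(y_s,u^*)\rd s+\sigma(y_s,u^*)\rd W_s$ with $y_0\sim\mathrm{Unif}(\mX)$. Since $\tilde W(T,\cdot)=0$, It\^o's formula combined with the differential inequality gives, after integration and taking expectation, $V_u(0,y_0)-V^*(0,y_0)\le \frac{1}{2\mu_G}\EE\sqbra{\int_0^T\abs{\nbu G(s,y_s,u(s,y_s),-\nx V_u,-\nx^2 V_u)}^2\rd s \mid y_0}$. Averaging over $y_0$ and using the upper bound $\rho^{u^*}\le \rho_1$ from Proposition \ref{prop:rho} yields $J[u]-J[u^*]\le \frac{\rho_1}{2\mu_G}\int_0^T\int_{\mX}\abs{\nbu G}^2\rd x\rd t$. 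On the other hand, Proposition \ref{prop:cost_derivative} identifies $\fd{J}{u}=-\rho^u\,\nbu G$, so the lower bound $\rho^u\ge\rho_0$ of Proposition \ref{prop:rho} gives $\norm{\fd{J}{u}}_{L^2}^2\ge \rho_0^2\int_0^T\int_{\mX}\abs{\nbu G}^2\rd x\rd t$. Combining yields the PL inequality with $C=\rho_1/(2\mu_G\rho_0^2)$, and Gr\"onwall applied to $\partial_\tau(J[u^\tau]-J[u^*])=-\norm{\fd{J}{u^\tau}}_{L^2}^2$ delivers the exponential rate with $c=2\mu_G\rho_0^2/\rho_1$. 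The $L^2$ corollary $\norm{u^\tau-u^*}_{L^2}\to 0$ then follows at once from Assumption \ref{assump:actor_rate}.

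The main obstacle is the Feynman--Kac step that bounds $V_u-V^*$ by the squared local Hamiltonian gap $\abs{\nbu G}^2$. It requires inserting the local optimal control function $u^{\mathrm{loc}}$ to bridge between $u^*$ (the global maximiser defined through $-\nx V^*$ and $-\nx^2 V^*$) and the maximiser of $G(t,x,\cdot,-\nx V_u,-\nx^2 V_u)$, so that strong concavity can be invoked; the non-convexity of $J$ in $u$ makes this indirection unavoidable, which is precisely the role of the local optimal control function highlighted in the introduction. The other essential structural ingredient is that $G$ is linear in $(p,P)$: this is what turns $\pt\tilde W+\mL_{u^*}\tilde W$ into a clean Hamiltonian difference with no second order residuals, allowing the Feynman--Kac representation to produce the quadratic bound used above.
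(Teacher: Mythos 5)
Your proof is correct, and it takes a genuinely different and considerably shorter route than the paper. The paper's argument splits the gradient flow into two cases according to whether $\norm{u^{\tau}-u^{\tau\diamond}}_{L^2}/\norm{u^{\tau}-u^*}_{L^2}$ stays bounded away from zero, handles the ``good'' case via Lemma~\ref{lem:J_quadratic} (quadratic growth of $J$), and dispatches the ``bad'' case by a lengthy barrier-function argument (Lemma~\ref{lem:long}, doubling of variables in the spirit of viscosity-solution uniqueness) combined with the superlinear estimate of Lemma~\ref{lem:quadratic_Vu} and the implicit-function Lipschitz bound of Lemma~\ref{lem:implicit}. You instead derive a \emph{global} Polyak--{\L}ojasiewicz inequality $J[u]-J[u^*]\le \tfrac{\rho_1}{2\mu_G\rho_0^2}\,\norm{\fd{J}{u}}_{L^2}^2$ valid for every $u\in\mU$: since $G$ is affine in $(p,P)$, subtracting \eqref{eq:HJ} from \eqref{eq:HJB} gives exactly $\pt\tilde W+\mL_{u^*}\tilde W=G(t,x,u^*,-\nx V_u,-\nx^2 V_u)-G(t,x,u,-\nx V_u,-\nx^2 V_u)$ for $\tilde W=V^*-V_u$, the right side is bounded above by $\tfrac{1}{2\mu_G}\abs{\nbu G}^2$ by passing through $u^{\diamond}$ and invoking the standard PL inequality for $\mu_G$-strongly concave functions, and Feynman--Kac along the $u^*$-dynamics (with the terminal condition $\tilde W(T,\cdot)=0$ and the two-sided density bounds of Proposition~\ref{prop:rho}) integrates this to the claimed PL inequality. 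Gr\"onwall on $\frac{\rd}{\rd\tau}J[u^{\tau}]=-\norm{\fd{J}{u^{\tau}}}_{L^2}^2$ then gives \eqref{eq:actor_rate} with the explicit constant $c=2\mu_G\rho_0^2/\rho_1$. What your approach buys: it entirely avoids the case split, the barrier-function machinery of Lemma~\ref{lem:long} and the delicate auxiliary Lemmas~\ref{lem:J_quadratic}--\ref{lem:implicit}; it yields an explicit rate constant; and, since the PL inequality holds unconditionally, it in fact establishes the exponential decay \eqref{eq:actor_rate} \emph{without} Assumption~\ref{assump:actor_rate}, which is then needed only for the $L^2$ corollary $\norm{u^{\tau}-u^*}_{L^2}\to 0$. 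What the paper's route buys, at least in intent, is a formalism (comparison/uniqueness-type barrier argument) that has a chance of surviving when $V^*$ is only a viscosity solution; your verification-theorem/performance-difference argument needs $V^*\in C^{1,2}$ to apply It\^o to $\tilde W$. That said, the paper's present proof also uses the classical-solution assumption (cf.\ the Hessian-continuity step \eqref{eq:diff_d2Phi}), so under Assumption~\ref{assump:basic} your proof is a strict simplification of the existing one.
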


The proofs for Theorems \ref{thm:actor_converge} and \ref{thm:actor_rate} overlap quite a bit, so we will prove them together. We present the key idea of the proof here and leave a detailed version to Appendix \ref{sec:thms}. Throughout all the proofs, we will use $C$ to denote some absolute constant that only depends on $n, K, T$, which may change depending on the context.
\begin{proof}[Key ideas of proofs to Theorems \ref{thm:actor_converge} and \ref{thm:actor_rate}]
Establishing the 
PL condition
is one typical way to show the convergence of gradient descent (or gradient flow). In order to distinguish this condition, we make a technical definition. 

For a control function $u(t,x)$, we define a corresponding ``local optimal'' control function as
\begin{equation}\label{eq:udiamond_sketch}
u^{\diamond}(t,x) := \argmax_{u' \in \RR^{n'}} G(t,x,u',-\nx V_u(t,x), -\nx^2 V_u(t,x)).
\end{equation}
We call $u^{\diamond}$ ``local optimal'' because 
it is similar to the (global) optimal condition \eqref{eq:max1}, while we have $V_u$ instead of $V_{u^{\diamond}}$ on the right-hand side.
So, $\abs{u(t,x) - u^{\diamond}(t,x)}$ can measure the distance between the current control $u$ and the local optimal one at $(t,x)$. For each control function $u^{\tau}$ along the gradient flow \eqref{eq:PG_ideal}, we thus define a corresponding ``local optimal'' control function $u^{\tau \diamond}$.

Accordingly, we separate into two scenarios through a condition (see \eqref{eq:easy_case_sketch}). Indeed, the local optimal control function \eqref{eq:udiamond_sketch} is introduced to establish a condition for $u^{\tau}$ to distinguish the two scenarios.

The first case is when there exist positive constants $\mu_0$ and $\tau_0$ such that
\begin{equation}\label{eq:easy_case_sketch}
\norm{u^{\tau} - u^{\tau \diamond}}_{L^2} \ge \mu_0 \norm{u^{\tau} - u^*}_{L^2}
\end{equation}
for all $\tau \ge \tau_0$. This assumption directly implies the PL condition, and thus the convergence analysis follows easily. The non-trivial task is of course to analyze the scenario when this assumption does not hold, which consists of the main technical work. 

When assumption \eqref{eq:easy_case_sketch} does not hold, we can find a sequence $\{\tau_k\}$, increasing to infinity, such that for each $k$
\begin{equation*}
\norm{u^{\tau_k} - u^{\tau_k\diamond}}_{L^2} \le \frac1k \norm{u^{\tau_k} - u^*}_{L^2}.
\end{equation*}
Denoting $u^{\tau_k}$ and $V_{u^{\tau_k}}$ by $u_k$ and $V_k$, we rewrite the above as
\begin{equation}\label{eq:hard_case_sketch}
\norm{u_k - u_k^{\diamond}}_{L^2} \le \frac1k \norm{u_k - u^*}_{L^2}.
\end{equation}
By Proposition \ref{prop:value_decreasing}, the value function $V_k(t,x)$ is decreasing in $k$, so it has a pointwise limit $V_{\infty}(t,x)$. Then we compare the maximum condition and the definition of the local optimal control
\begin{align*}
u^*(t,x) &= \argmax_{u \in \RR^{n'}} G\parentheses{t, x, u, -\nx V^*(t,x), -\nx^2 V^*(t,x)}\\
u^{\diamond}_k(t,x) &= \argmax_{u \in \RR^{n'}} G(t,x,u,-\nx V_k(t,x), -\nx^2 V_k(t,x))
\end{align*} 
and observe that $V_k$ is very ``close'' to the solution of the HJB equation $V^*$ in the sense that $u_k$ is very close to $u_k^{\diamond}$ (see \eqref{eq:hard_case_sketch}), i.e., the maximum condition is ``nearly'' satisfied. Therefore, the key idea is to modify the proof for the uniqueness of HJB equation and show 
\begin{equation}\label{eq:claim_sketch}
V_{\infty}(t,x) \equiv V^*(t,x).
\end{equation}
The rest of the proof will come naturally after \eqref{eq:claim_sketch}.
\end{proof}

\section{Conclusion and future directions}\label{sec:future}

In conclusion, we study the stochastic optimal control problem with controlled diffusion in continuous time. We propose a policy gradient framework to solve the problem, where the control dynamic follows the gradient flow of the cost functional \eqref{eq:PG_ideal}. We design a local optimal control function to analyze the convergence property of the algorithm and prove that the algorithm converges to the optimum under some regularity assumptions.

Our analysis can be extended in several directions. In this work, we concentrate on the time-invariant optimal control problem. It should not be too difficult to extend to more general time-dependent scenarios, although it may require additional regularity assumptions. 

Regarding regularity, we have focused on the classical solutions to the HJ and HJB equations. It is natural to ask about viscosity solutions with less stringent regularity assumptions, which is an important future research direction.

As already mentioned in Section \ref{sec:policy_gradient}, our setting can be viewed as a limiting case under the actor-critic framework with two time scales. It is of interest to establish the full convergence of the actor-critic method with the critic (policy evaluation) dynamics included. It is also interesting to extend the analysis to a single time-scale actor-critic method \cite{zhou2023single}, which couples together the control dynamics with policy evaluation \cite{zhou2024solving}. We will leave these for future works.

\smallskip
\noindent \textbf{Acknowledgement.} The work is supported in part by National Science Foundation via the grant DMS-2012286.


\bibliographystyle{plain}
\bibliography{ref.bib}

\newpage
\appendix

The appendix is organized as follows:
\begin{enumerate}
[noitemsep,topsep=0pt,parsep=0pt,partopsep=0pt]
\item We give a counter example to confirm the necessity of the assumption that $G$ is strongly concave in $u$ in Appendix \ref{sec:counter_example}.
\item We prove the propositions in Appendix \ref{sec:props}.
\item We state and prove some auxiliary lemmas in Appendix \ref{sec:lemmas} in preparation for proving the main theorems.
\item Finally, We prove the main theorems in Appendix \ref{sec:thms}.
\end{enumerate}

\section{Examples}

\subsection{A concrete example}\label{sec:concrete_example}
In this section, we provide an example that satisfies all assumptions. For simplicity, let us consider a $1$ dimensional example, i.e. $m=n=n'=1$. The state space is a one-dimensional unit torus, i.e., $[0,1]$ with periodic boundary condition. This example can be generalized to higher dimensions easily.

We set $T=1$, $b(x,u)=u$, $\sigma(x,u)=1$, $r(x,u) = \frac12 (1-u)^2 + \frac14 (1-u)^4$, and $h(x)=1$. We can see that the uniform ellipticity assumption is satisfied. We also recall that $x_0$ is uniformly distributed, so $\rho(0,x)=1$. In this setting, the generalized Hamiltonian is
$$G(t,x,u,p,P) = \frac12 P + pu - \frac12 (1-u)^2 - \frac14 (1-u)^4.$$
Its first and second order derivatives w.r.t. $u$ are
\begin{align}
& \partial_u G(t,x,u,p,P) = p + 1 - u + (1-u)^3, \label{eq:nbuG_example} \\
&\partial_u^2 G(t,x,u,p,P) = -1 -3 (1-u)^2 \le -1, \nonumber
\end{align}
which implies $G$ is $\lambda_G$-strongly concave with $\lambda_G=1$.

We check Assumption \ref{assump:basic} first. The functions $b$, $\sigma$, $r$, and $h$ are clearly smooth, with the corresponding derivatives bounded. Therefore, the first and second items in Assumption \ref{assump:basic} are satisfied. Solving the critical point equation $\partial_u G = 0$ through Cardano formula, the generalized Hamiltonian is maximized at 
$$u^* = 1 + \parentheses{\frac12p + \sqrt{\frac14p^2 + \frac{1}{27}}}^{\frac13} + \parentheses{\frac12p - \sqrt{\frac14p^2 + \frac{1}{27}}}^{\frac13},$$
which implies 
$$u^*(t,x) = 1 + \parentheses{-\frac12 V^*_x + \sqrt{\frac14(V^*_x)^2 + \frac{1}{27}}}^{\frac13} + \parentheses{-\frac12V^*_x - \sqrt{\frac14(V^*_x)^2 + \frac{1}{27}}}^{\frac13},$$
where we denote $\partial_x V^*(t,x)$ by $V^*_x$ for simplicity. Substitute this into the HJB equation 
$$- V^*_t(t,x) + \sup_{u\in\RR} \parentheses{-\frac12 V^*_{xx}(t,x) -  V^*_x(t,x) u - \frac12(1-u)^2 - \frac14(1-u)^4} = 0,$$
we obtain
\begin{align*}
& -V^*_t - \frac12 V^*_{xx} - \frac14 \sqbra{ \parentheses{-\frac12 V^*_x + \sqrt{\frac14(V^*_x)^2 + \frac{1}{27}}}^{\frac13} +  \parentheses{-\frac12V^*_x - \sqrt{\frac14(V^*_x)^2 + \frac{1}{27}}}^{\frac13}}^2\\
& - V_x^* \sqbra{1 + \frac34 \parentheses{-\frac12 V^*_x + \sqrt{\frac14(V^*_x)^2 + \frac{1}{27}}}^{\frac13} + \frac34 \parentheses{-\frac12V^*_x - \sqrt{\frac14(V^*_x)^2 + \frac{1}{27}}}^{\frac13}} = 0.
\end{align*}
The solution to this HJB equation is $V^*(t,x) = 1$. Hence, the corresponding optimal control is $u^*(t,x) = 1$. Therefore, the third item in Assumption \ref{assump:basic} is satisfied.

We check Assumption \ref{assump:u_smooth} next. We start with $u^0(t,x) = 0$ at $\tau=0$ and study the policy gradient dynamic. By definition \eqref{eq:value}, the initial ($\tau=0$) value function is $V_{u^0}(t,x) = \frac34(1-t) + 1$. Therefore, the initial control and value function are state homogeneous (i.e., do not depend on $x$).

Then, we consider the policy gradient dynamic. By definition \eqref{eq:PG_ideal} and \eqref{eq:nbuG_example} above, the policy gradient dynamic for $u^\tau$ is
\begin{equation}\label{eq:PG_example}
\dfrac{\rd}{\rd\tau} u^\tau(t,x) = \rho^{u^\tau}(t,x) \, \sqbra{-\partial_x V_{u^\tau}(t,x) + 1 - u^\tau(t,x) + (1 - u^\tau(t,x))^3}.
\end{equation}
Here, $\rho^{u^\tau}$ is the density function of the state dynamic under control $u^\tau$ and it satisfies the Fokker--Planck equation
$$\pt \rho^{u^\tau}(t,x) = -\partial_x \sqbra{u^\tau(t,x) \rho(t,x)} + \frac12 \partial_x^2 \rho^{u^\tau}(t,x) \quad\quad \rho^{u^\tau}(0,x) = 1.$$
We observe that $\rho^{u^0}(t,x) = 1$ at initial time $\tau=0$. Also, we have $\rho^{u^\tau}(t,x) = 1$ (i.e., is state homogeneous) as long as the control $u^\tau$ is state homogeneous. We also observe that the value function $V_{u^\tau}$ is state homogeneous as long as $u^\tau$ is state homogeneous. Therefore, according to the policy gradient dynamic \eqref{eq:PG_example}, the state homogeneity property of $u^\tau$ is preserved. So, $\rho^{u^\tau}(t,x) = 1$ holds for all $\tau\ge0$. Also, the value function $V_{u^\tau}$ is state homogeneous for all $\tau\ge0$. Therefore, the policy gradient flow \eqref{eq:PG_example} becomes
$$\dfrac{\rd}{\rd\tau} u^\tau(t,x) = 1 - u^\tau(t,x) + (1 - u^\tau(t,x))^3.$$
Using the separation of variable method, we obtain the explicit policy gradient dynamic
$$u^\tau(t,x) = 1 - \sqrt{ \dfrac{1}{2e^{2\tau}-1}}.$$
$u^\tau(t,x)$ is smooth and the corresponding derivatives are bounded. Therefore, Assumption \ref{assump:u_smooth} is satisfied.

Finally, let us check Assumption \ref{assump:actor_rate}. First, we can directly compute.
$$J[u^*] = \EE \sqbra{\int_0^1 r(x^*_t, u^*(t,x^*_t)) \,\rd t + h(x^*_T)} = 1.$$
Therefore, given any control function $u \in \mU$, let $x_t$ be the corresponding state dynamic, then
\begin{align*}
& \quad J[u] - J[u^*] = \EE\sqbra{\int_0^1 \parentheses{ \frac12 (1-u(t,x_t))^2 + \frac14 (1-u(t,x_t))^4} \,\rd t + h(x_T)} - 1 \\
& \ge \EE\sqbra{\int_0^1 \frac12 (1-u(t,x_t))^2 \,\rd t + h(x_T)} - 1 = \EE\sqbra{\int_0^1 \frac12 (1-u(t,x_t))^2 \,\rd t}\\
&  = \int_0^1 \int_\RR \frac12 (1-u(t,x))^2 \rho^u(t,x) \,\rd x \,\rd t \ge \int_0^1 \int_\RR \frac12 (1-u(t,x))^2 \rho_0 \,\rd x \,\rd t\\
&   = \int_0^1 \int_\RR \frac12 (u^*(t,x)-u(t,x))^2 \rho_0 \,\rd x \,\rd t = \frac12 \rho_0 \norm{u-u^*}^2_{L^2},
\end{align*}
where the second inequality is due to Proposition \ref{prop:rho}. Therefore, Assumption \ref{assump:actor_rate} is satisfied with $\omega(y) = \sqrt{2y/\rho_0}$.

Therefore, we establish that all assumptions in this paper are satisfied for this example.

\subsection{A counter example for multiple critical points of the gradient flow}\label{sec:counter_example}
In this section, we give a counter example to show the necessity of the strong concavity of $G$ in $u$. Let $n=n'=m=1$. Consider the HJB equation
$$-\partial_t V(t,x) + \sup_u  \sqbra{-\partial^2_x V(t,x) - \frac13 u^3 \partial_x V(t,x) - \frac14 u^4 + \frac12 u^2} = 0$$
with some nice terminal condition $V(T,x) = h(x)$. To simplify the notation, we use $V_t, V_{xx}, V_x$ to denote the derivatives. In order to obtain the optimal control, we need to find the maximum of $G$. i.e., we seek for the minimum of the quartic function $g(u; V_x) := \frac14 u^4 + \frac13 V_x u^3 - \frac12 u^2$ w.r.t. $u$ for any given $V_x \in \RR$ (cf. \eqref{eq:max1}). This quartic function has a local maximum $u=0$ and two local minimums
$$u_{1,2} = \frac12\parentheses{-V_x \pm \sqrt{V_x^2 + 4}}.$$
With some standard calculus, we obtain the optimal control
$$u^* = \frac12\parentheses{-V_x - \sign(V_x) \sqrt{V_x^2 + 4}}.$$
The HJB equation becomes
\begin{equation}\label{eq:HJB_eg}
- V_t(t,x) - V_{xx}(t,x) - g \parentheses{\frac12\parentheses{-V_x - \sign(V_x) \sqrt{V_x^2 + 4}}; V_x} = 0,
\end{equation}
which is semilinear. We define a second control (implicitly) through
$$\widetilde{u} = \frac12\parentheses{-V_x + \sign(V_x) \sqrt{V_x^2 + 4}}.$$
The the HJ equation corresponding to this control is
\begin{equation}\label{eq:HJ_eg}
- V_t(t,x) - V_{xx}(t,x) - g \parentheses{\frac12\parentheses{-V_x + \sign(V_x) \sqrt{V_x^2 + 4}}; V_x} = 0
\end{equation}
According to standard results in semi-linear parabolic PDE (see \cite{taylor2012partial} Chapter 15 for example), we have unique solutions $V^*$ and $\widetilde{V}$ to \eqref{eq:HJB_eg} and \eqref{eq:HJ_eg} respectively (if $T$ is not too large). Note that $\widetilde{u}(t,x)$ is a local but not global maximum for the map $u \mapsto G(t,x,u,-\widetilde{V}_x,-\widetilde{V}_{xx})$ almost everywhere and $\widetilde{V}$ is the value function for $\widetilde{u}$. Therefore, if our policy gradient algorithm reaches $\widetilde{u}$, it becomes static at this suboptimal solution. This example demonstrate the necessity of the concavity assumption of the generalized Hamiltonian $G$ in $u$.

\section{Proofs for the Propositions}\label{sec:props}
\begin{proof}[Proof of Proposition \ref{prop:cost_derivative}]
We fix an arbitrary perturbation function $\phi(t,x)$, then
\begin{equation*}
    \dfrac{\rd}{\rd \ve} J[u + \ve \phi] \bigg|_{\ve=0} = \inner{\fd{J}{u}}{\phi}_{L^2}.
\end{equation*}
We denote $x^{\ve}_t$ the SDE \eqref{eq:SDE_X} under control function $u^{\ve} := u + \ve\phi$ that start with $x^{\ve}_0 \sim \text{Unif}(\mX)$. Let $\rho^{\ve}(t,x)$ be its density. We also denote the corresponding value function by $V^{\ve}(t,x) := V_{u^{\ve}}(t,x)$ for simplicity. Then, $\rho^{\ve}$ and $V^{\ve}$ depend continuously on $\ve$ (cf. \cite{yong1999stochastic} section 4.4.1). By definition of the cost functional \eqref{eq:cost},
\begin{equation*}
\begin{aligned}
    J[u + \ve\phi] & = \EE\sqbra{\int_0^T r(x^{\ve}_t, u^{\ve}(t,x^{\ve}_t)) \rd t + h(x^{\ve}_T)} \\
    & = \int_0^T \int_{\mX} r(x, u^{\ve}(t, x)) \rho^{\ve}(t ,x) \,\rd x \, \rd t + \int_{\mX} V^{\ve}(T, x) \rho^{\ve}(T ,x) \,\rd x.
\end{aligned}
\end{equation*}
Taking derivative w.r.t. $\ve$, and note that $V^{\ve}(T,x)=h(x)$ does not depend on $\ve$, we obtain
\begin{equation}\label{eq:dJdep}
\begin{aligned}
    &\dfrac{\rd}{\rd \ve} J[u + \ve \phi] = \int_{\mX} V^{\ve}(T, x) \pve \rho^{\ve}(T ,x) \,\rd x + \\
    & + \int_0^T \int_{\mX} \sqbra{\nbu r\parentheses{x, u^{\ve}(t, x)}\tp \phi(t,x) \rho^{\ve}(t ,x) + r\parentheses{x, u^{\ve}(t, x)} \pve \rho^{\ve}(t ,x)} \,\rd x \, \rd t.
\end{aligned}
\end{equation}
In order to compute $V^{\ve}(T, x) \pve \rho^{\ve}(T ,x)$ in \eqref{eq:dJdep}, we write down the integral equation
\begin{equation}\label{eq:VT_drho}
    V^{\ve}(T,x) \rho^{\ve}(T,x) = V^{\ve}(0,x) \rho^{\ve}(0,x) + \int_0^T \sqbra{\partial_t \rho^{\ve}(t,x) V^{\ve}(t,x) + \rho^{\ve}(t,x) \partial_t V^{\ve}(t,x)} \rd t.
\end{equation}
We also have
\begin{equation}\label{eq:prop_dJdu_temp}
\begin{aligned}
& \quad \pve V^{\ve}(0,x) \rho^{\ve}(0,x) + \int_0^T \sqbra{ \partial_t \rho^{\ve}(t,x) \pve V^{\ve}(t,x) + \rho^{\ve}(t,x) \pve \partial_t V^{\ve}(t,x)} \rd t \\
& = \pve V^{\ve}(T,x) \rho^{\ve}(T,x) = \pve h(x) \rho^{\ve}(T,x) = 0.
\end{aligned}
\end{equation}
Next, taking derivative of \eqref{eq:VT_drho} w.r.t. $\ve$ (note that $V^{\ve}(T,x)=h(x)$ and $\rho^{\ve}(0,\cdot) \equiv 1$ do not depend on $\ve$), we obtain
\begin{equation}\label{eq:Vdrhodep}
\begin{aligned}
& \quad V^{\ve}(T, x) \, \pve \rho^{\ve}(T ,x) \\
& = \pve V^{\ve}(0,x) \rho^{\ve}(0,x) + \int_0^T \partial_{\ve} \sqbra{ \partial_t \rho^{\ve}(t,x) V^{\ve}(t,x) + \rho^{\ve}(t,x) \partial_t V^{\ve}(t,x) } \rd t \\
& = \int_0^T \sqbra{\pve \partial_t \rho^{\ve}(t,x) V^{\ve}(t,x) + \pve \rho^{\ve}(t,x) \partial_t V^{\ve}(t,x) } \rd t \\
& = \int_0^T \sqbra{\pve \partial_t \rho^{\ve}(t,x) V^{\ve}(t,x) + \pve \rho^{\ve}(t,x) G\parentheses{t, x, u(t, x), -\nx V^{\ve}, -\nx^2 V^{\ve}} } \rd t
\end{aligned}
\end{equation}
where we used \eqref{eq:prop_dJdu_temp} and the HJ equation \eqref{eq:HJ} in the second and third equality respectively.
Substitute \eqref{eq:Vdrhodep} into \eqref{eq:dJdep}, we obtain
\begin{equation*}
\begin{aligned}
& \quad \dfrac{\rd}{\rd \ve} J[u + \ve \phi] \\
& = \int_0^T \int_{\mX} \sqbra{\pve \rho^{\ve}(t,x) G\parentheses{t, x, u(t, x), -\nx V^{\ve}, -\nx^2 V^{\ve}} + \pve \partial_t \rho^{\ve}(t,x) V^{\ve}(t,x)} \,\rd x \, \rd t\\
& \hspace{0.4in} + \int_0^T \int_{\mX} \sqbra{\nbu r\parentheses{x, u^{\ve}(t, x)}\tp \phi(t,x) \rho^{\ve}(t ,x) + r\parentheses{x, u^{\ve}(t, x)} \pve \rho^{\ve}(t ,x)} \,\rd x \, \rd t
\end{aligned}
\end{equation*}
Taking derivative of the Fokker--Planck equation \eqref{eq:FokkerPlanck} w.r.t. $\ve$, we obtain
\begin{equation*}
\begin{aligned}
    & \quad \pve \partial_t \rho^{\ve}(t,x)\\
    & = -\nx \cdot \sqbra{\nbu b\parentheses{x, u^{\ve}(t, x)} \phi(t,x) \rho^{\ve}(t,x) + b\parentheses{x, u^{\ve}(t, x)} \pve \rho^{\ve}(t,x) } \\
    & \quad + \sum_{i,j=1}^n \partial_i \partial_j \sqbra{\nbu D_{ij} \parentheses{x, u^{\ve}(t, x)}\tp \phi(t,x) \rho^{\ve}(t,x) + D_{ij} \parentheses{x, u^{\ve}(t, x)} \pve \rho^{\ve}(t,x)}
\end{aligned}
\end{equation*}
Therefore
\begin{equation*}
\begin{aligned}
& \quad \dfrac{\rd}{\rd \ve} J[u + \ve \phi] = \int_0^T \int_{\mX} \left\{ \pve\rho^{\ve}(t,x) G\parentheses{t, x, u(t, x), -\nx V^{\ve}, -\nx^2 V^{\ve}} \right.\\
& + V^{\ve}(t,x) \{ -\nx \cdot \sqbra{\nbu b\parentheses{x, u^{\ve}(t, x)} \phi(t,x) \rho^{\ve}(t,x) + b\parentheses{x, u^{\ve}(t, x)} \pve \rho^{\ve}(t,x) } \\
& \hspace{0.4in} + \sum_{i,j=1}^n \partial_i \partial_j \sqbra{\nbu D_{ij} \parentheses{x, u^{\ve}(t, x)} \phi(t,x) \rho^{\ve}(t,x) + D_{ij} \parentheses{x, u^{\ve}(t, x)} \pve \rho^{\ve}(t,x)} \}\\
& \hspace{0.4in} + \left. \sqbra{\nbu r\parentheses{x, u^{\ve}(t, x)} \phi(t,x) \rho^{\ve}(t ,x) + r\parentheses{x, u^{\ve}(t, x)} \pve \rho^{\ve}(t ,x)} \right\}\rd x \, \rd t.
\end{aligned}
\end{equation*}
Applying integration by part in $x$, we get 
\begin{equation*}
\begin{aligned}
& \quad \dfrac{\rd}{\rd \ve} J[u + \ve \phi] = \int_0^T \int_{\mX} \left\{ \pve\rho^{\ve}(t,x) G\parentheses{t, x, u(t, x), -\nx V^{\ve}, -\nx^2 V^{\ve}} \right.\\
& \hspace{0.4in} +\nx V^{\ve}(t,x)\tp \sqbra{\nbu b\parentheses{x, u^{\ve}(t, x)} \phi(t,x) \rho^{\ve}(t,x) + b\parentheses{x, u^{\ve}(t, x)} \pve \rho^{\ve}(t,x) } \\
& \hspace{0.4in} + \sum_{i,j=1}^n \partial_i \partial_j V^{\ve}(t,x) \sqbra{\nbu D_{ij} \parentheses{x, u^{\ve}(t, x)} \phi(t,x) \rho^{\ve}(t,x) + D_{ij} \parentheses{x, u^{\ve}(t, x)} \pve \rho^{\ve}(t,x)}\\
& \hspace{0.4in} + \left. \sqbra{\nbu r\parentheses{x, u^{\ve}(t, x)}\tp \phi(t,x) \rho^{\ve}(t ,x) + r\parentheses{x, u^{\ve}(t, x)} \pve \rho^{\ve}(t ,x)} \right\}\rd x \, \rd t.
\end{aligned}
\end{equation*}
Making a rearrangement, we get 
\begin{equation*}
\begin{aligned}
& \quad \dfrac{\rd}{\rd \ve} J[u + \ve \phi]  = \int_0^T \int_{\mX} \bigg\{ \pve\rho^{\ve}(t,x) \Big[ G\parentheses{t, x, u(t, x), -\nx V^{\ve}, -\nx^2 V^{\ve}} \\
& \hspace{0.4in} + \nx V^{\ve}(t,x)\tp b\parentheses{x, u^{\ve}(t, x)} + \sum_{i,j=1}^n \partial_i \partial_j V^{\ve}(t,x) D_{ij} \parentheses{x, u^{\ve}(t, x)} + r\parentheses{x, u^{\ve}(t, x)} \Big] \\
& \hspace{0.4in} + \rho^{\ve}(t,x) \Big[ \nx V^{\ve}(t,x)\tp \nbu b\parentheses{x, u^{\ve}(t, x)} + \sum_{i,j=1}^n \partial_i \partial_j V^{\ve}(t,x) ~ \nbu D_{ij} \parentheses{x, u^{\ve}(t, x)}\\
& \hspace{1in} + \nbu r\parentheses{x, u^{\ve}(t, x)} \Big]  \phi(t,x) \bigg\} \rd x \, \rd t 
\end{aligned}
\end{equation*}
Therefore, by the definition of $G$ \eqref{eq:generalized_Hamiltonian},
\begin{equation*}
\begin{aligned}
& \quad \dfrac{\rd}{\rd \ve} J[u + \ve \phi] \\
& = \int_0^T \int_{\mX} \sqbra{\pve\rho^{\ve}(t,x) \cdot [0] - \rho^{\ve}(t,x) \nbu G\parentheses{t, x, u^{\ve}(t, x), -\nx V^{\ve}, -\nx^2 V^{\ve}} \phi(t,x)} \rd x \, \rd t\\
& = -\int_0^T \int_{\mX} \rho^{\ve}(t,x) \nbu G\parentheses{t, x, u^{\ve}(t, x), -\nx V^{\ve}, -\nx^2 V^{\ve}} \phi(t,x) \, \rd x \, \rd t.
\end{aligned}
\end{equation*}
Let $\ve = 0$, we get
\begin{equation}\label{eq:dJdve1}
\dfrac{\rd}{\rd \ve} J[u + \ve \phi] \bigg|_{\ve=0} = - \int_0^T \int_{\mX} \rho(t,x) \nbu G\parentheses{t, x, u(t, x), -\nx V_u, -\nx^2 V_u} \phi(t,x) \, \rd x \, \rd t.
\end{equation}
Therefore,
\begin{equation*}
\fd{J}{u}(t, x)  = - \rho(t,x) ~\nbu G\parentheses{t, x, u(t, x), -\nx V(t,x), -\nx^2 V(t,x) }.
\end{equation*}
i.e. \eqref{eq:actor_derivative} holds. The proof for \eqref{eq:dVdu} is almost the same. Firstly, changing the control function at $t < s$ will not affect $V^u(s,\cdot)$ by definition, so we just need to show \eqref{eq:dVdu} when $t \ge s$. We recall the definition of value function \eqref{eq:value}
\begin{equation*}
\begin{aligned}
V_u(s,y) &= \EE\sqbra{\int_{s}^{T} r(x_t, u_t) \,\rd t + h(x_T) \;\Big|\; x_s=y} \\
& = \int_s^T \int_{\mX} r(x,u(t,x)) p^u(t,x;s,y) \, \rd x \, \rd t + \int_{\mX} h(x) p^u(T,x;s,y) \,\rd x.
\end{aligned}
\end{equation*}
Here, $p^u(t,x;s,y)$ is the fundamental solution of the Fokker--Planck equation \eqref{eq:FokkerPlanck}, so $p^u(t,x;s,y)$, as a function of $(t,x)$, is the density of $x_t$ starting at $x_s=y$. Therefore, we only need to repeat the argument when proving \eqref{eq:dJdve1}. The only caveat we need to be careful is that $p^{\ve}(s,\cdot;s,y) = \delta_y$, so the classical derivative does not exist. This is not an essential difficulty because we can pick an arbitrary smooth probability density function $\psi(y)$ on $\mX$ and compute
\begin{equation}\label{eq:general_derivative}
\frac{\rd}{\rd \ve} \int_{\mX} V^{\ve}(s,y) \psi(y) \, \rd y \Big|_{\ve=0}.
\end{equation}
For example, when $s=0$ and $\psi \equiv 1$, \eqref{eq:general_derivative} becomes 
\begin{equation*}
\frac{\rd}{\rd \ve} \int_{\mX} V^{\ve}(0,y) \, \rd y \Big|_{\ve=0} = \frac{\rd}{\rd \ve} J[u^{\ve}] \Big|_{\ve=0}.
\end{equation*}
Therefore, we can repeat the argument to prove \eqref{eq:actor_derivative} and get
\begin{equation}\label{eq:general_derivative2}
\begin{aligned}
& \quad \frac{\rd}{\rd \ve} \int_{\mX} V^{\ve}(s,y) \psi(y) \, \rd y \Big|_{\ve=0} \\
& = - \int_s^T \int_{\mX} \rho^{u,s,\psi}(t,x) \nbu G\parentheses{t, x, u(t, x), -\nx V_u, -\nx^2 V_u} \phi(t,x) \, \rd x \, \rd t.
\end{aligned}
\end{equation}
where $\rho^{u,s,\psi}(t,x) := \int_{\mX} p^u(t,x;s,y) \psi(y) \, \rd y$ is the solution to the Fokker--Planck equation with initial condition $\rho^{u,s,\psi}(s,x) = \psi(x)$ and is also the density function of $x_t$, which starts with $x_s$, who follows a distribution of $\psi$. The only difference between proving \eqref{eq:dJdve1} and \eqref{eq:general_derivative2} is that we need to replace $\int_0^T$ by $\int_s^T$, replace $\rho^{\ve}(t,x)$ by $\rho^{\ve,s,\psi}(t,x) := \rho^{u^{\ve},s,\psi}(t,x)$, and replace $\rho^{\ve}(0,x)$ by $\rho^{\ve,s,\psi}(s,x)$. Therefore,
\begin{equation*}
\dfrac{\delta \parentheses{ \int_{\mX} V_u(s,y) \psi(y) \, \rd y}}{\delta u} = - \int_{\mX} p^u(t,x;s,y) \psi(y) \, \rd y \, \nbu G\parentheses{t, x, u(t, x), -\nx V_u, -\nx^2 V_u}.
\end{equation*}
Hence, \eqref{eq:dVdu} holds.
\end{proof}
\begin{rmk}
In the proof of this proposition, we have assumed sufficient regularity such that all the derivatives exist in the classical sense. We believe that the theorem still holds with weaker assumptions, which can be proved using the spike variation technique. See \cite{yong1999stochastic} section 3.4 Theorem 4.4 for example.
\end{rmk}

\begin{proof}[Proof for Proposition \ref{prop:rho}]
The Fokker--Planck equation has been well-studied. Let $p^u(t,x;s,y)$ denote the fundamental solution to \eqref{eq:FokkerPlanck}. Aronson found that the fundamental solution of a linear parabolic equation can be upper and lower bounded by fundamental solutions of heat equation (i.e. Gaussian functions) with different thermal diffusivity constant \cite{aronson1967bounds}. For example, let $\widetilde{p}^u(t,x;s,y)$ be the fundamental solution of the Fokker--Planck equation \eqref{eq:FokkerPlanck} in $\RR^n$ (where $b$ and $\sigma$ are extended periodically), then
\begin{equation}\label{eq:prop_rho_temp}
C^{-1} (t-s)^{-n/2} \exp\Bigl(-\dfrac{C \abs{x-y}^2}{t-s}\Bigr) \le \widetilde{p}^u(t,x;s,y) \le C (t-s)^{-n/2} \exp\Bigl(-\dfrac{C^{-1} \abs{x-y}^2}{t-s}\Bigr)
\end{equation}
for all $s<t\le T$ and $x,y \in \RR^n$, where $C$ only depends on $n$, $T$, and $K$. We are in the unit torus $\mX$ instead of $\RR^n$, so 
$$p^u(t,x;s,y) = \sum_{z \in \ZZ^n} \widetilde{p}^u(t,x + z;s,y),$$
where the $x,y$ on the left is in $\mX$, and the $x,y$ on the right can be viewed as their embedding into $\RR^n$.
Our solution to the Fokker--Planck equation, starting at uniform distribution $\rho(0,x) \equiv 1$, can be represented by
\begin{align*}
& \quad \rho^u(t,x) = \int_{\mX} p^u(t,x;0,y) \,\rd y = \int_{[0,1]^n} \sum_{z \in \ZZ^n} \widetilde{p}^u(t,x+z;0,y) \,\rd y \\
& = \int_{[0,1]^n} \sum_{z \in \ZZ^n} \widetilde{p}^u(t,x;0,y-z) \,\rd y = \int_{\RR^n} \widetilde{p}^u(t,x;0,y) \,\rd y 
\end{align*}

Substituting the lower and upper bound \eqref{eq:prop_rho_temp}, we obtain
$$\rho^u(t,x) \ge \int_{\RR^n}  C^{-1} t^{-n/2} \exp\parentheses{-\dfrac{C \abs{x-y}^2}{t}} \,\rd y =: \rho_0$$
and
$$\rho^u(t,x) \le \int_{\RR^n}  C t^{-n/2} \exp\parentheses{-\dfrac{C^{-1} \abs{x-y}^2}{t}} \,\rd y =: \rho_1.$$
Here, the two integrals above are invariant w.r.t. $t$ because of a simple change of variable. Therefore, we obtain a uniform lower bound $\rho_0$ and upper bound $\rho_1$ for $\rho^u(t,x)$, which depend only on $T$, $n$, and $K$.
\end{proof}

\section{Some auxiliary lemmas}\label{sec:lemmas}
We state and prove some lemmas in this section.
\begin{lem}\label{lem:gronwall}[Stochastic Gronwall inequality] 
Under Assumption \ref{assump:basic}, there exists a positive constant $C_1$ s.t. for any two control functions $u_1, u_2 \in \mU$, we have
\begin{equation}\label{eq:Gronwall1}
\sup_{t \in [0,T]} \EE  \abs{x^1_t - x^2_t}^2 \le C_1 \EE\abs{x^1_0-x^2_0}^2 + C_1 \EE \sqbra{ \int_0^T \abs{u_1(t,x^1_t) - u_2(t,x^1_t)}^2 \rd t},
\end{equation}
where $x^1_t$ and $x^2_t$ are the state process \eqref{eq:SDE_X} under controls $u_1$ and $u_2$ respectively. As a direct corollary, if $u_1=u_2$, then
\begin{equation}\label{eq:Gronwall2}
\sup_{t \in [0,T]} \EE  \abs{x^1_t - x^2_t}^2 \le C_1 \EE\abs{x^1_0-x^2_0}^2.
\end{equation}
Moreover, if $x^1_0=x^2_0 \sim \text{Unif}(\mX)$, then
\begin{equation}\label{eq:Gronwall3}
\sup_{t \in [0,T]} \EE \abs{x^1_t - x^2_t}^2 \le C_1 \norm{u_1-u_2}_{L^2}^2.
\end{equation}
\end{lem}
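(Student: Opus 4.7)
The plan is to apply It\^o's formula to $|x^1_t - x^2_t|^2$ and reduce the estimate to a Gronwall argument, exploiting the Lipschitz regularity of $b$ and $\sigma$ built into Assumption \ref{assump:basic}. Write
\begin{equation*}
\rd(x^1_t - x^2_t) = \sqbra{b(x^1_t, u_1(t,x^1_t)) - b(x^2_t, u_2(t,x^2_t))}\rd t + \sqbra{\sigma(x^1_t, u_1(t,x^1_t)) - \sigma(x^2_t, u_2(t,x^2_t))}\rd W_t.
\end{equation*}
It\^o's formula applied to $|x^1_t - x^2_t|^2$ produces a drift term involving the inner product $\inner{x^1_t - x^2_t}{b(x^1_t, u_1(t,x^1_t)) - b(x^2_t, u_2(t,x^2_t))}$, the trace of the squared diffusion difference, and a local martingale. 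Taking expectation kills the martingale piece (after a standard localization argument, which is justified since $b,\sigma$ are bounded under Assumption \ref{assump:basic} and $\mX$ is compact).

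The core of the proof is to bound each expectation pointwise in $t$. Using $|b(x^1_t, u_1(t,x^1_t)) - b(x^2_t, u_2(t,x^2_t))| \le L|x^1_t - x^2_t| + L|u_1(t,x^1_t) - u_2(t,x^2_t)|$ and the analogous estimate for $\sigma$ (with the constant $L$ from the bound on $\nx b$, $\nbu b$, $\nx \sigma$, $\nbu \sigma$), together with the elementary inequality $|u_1(t,x^1_t) - u_2(t,x^2_t)|^2 \le 2|u_1(t,x^1_t) - u_2(t,x^1_t)|^2 + 2|u_2(t,x^1_t) - u_2(t,x^2_t)|^2$ and the Lipschitz bound $|u_2(t,x^1_t) - u_2(t,x^2_t)| \le K|x^1_t - x^2_t|$ from $u_2 \in \mU$, I arrive at
\begin{equation*}
\EE\abs{x^1_t - x^2_t}^2 \le \EE\abs{x^1_0 - x^2_0}^2 + C \int_0^t \EE\abs{x^1_s - x^2_s}^2 \rd s + C \EE \int_0^t \abs{u_1(s,x^1_s) - u_2(s,x^1_s)}^2 \rd s.
\end{equation*}
Gronwall's lemma then gives \eqref{eq:Gronwall1} with $C_1 = C e^{CT}$. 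Here no step is really the main obstacle; the only subtlety is to apply Lipschitz in the ``correct'' order (measure the control difference at the common point $x^1_s$) so that the right-hand side matches the statement, which is why the extra $|u_2(t,x^1_t) - u_2(t,x^2_t)|$ piece must be absorbed into the state-difference term.

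Both corollaries are immediate. For \eqref{eq:Gronwall2}, with $u_1 = u_2$ the integrand $|u_1(s,x^1_s) - u_2(s,x^1_s)|$ vanishes identically. For \eqref{eq:Gronwall3}, when $x^1_0 \sim \text{Unif}(\mX)$ the density $\rho^{u_1}(t,\cdot)$ is well defined and by Proposition \ref{prop:rho} is bounded above by $\rho_1$, so
\begin{equation*}
\EE \int_0^T \abs{u_1(t,x^1_t) - u_2(t,x^1_t)}^2 \rd t = \int_0^T \int_{\mX} \abs{u_1(t,x) - u_2(t,x)}^2 \rho^{u_1}(t,x) \, \rd x \, \rd t \le \rho_1 \norm{u_1 - u_2}_{L^2}^2,
\end{equation*}
and adjusting $C_1$ to absorb $\rho_1$ yields the claim.
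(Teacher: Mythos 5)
Your proposal is correct and follows essentially the same route as the paper: Itô's formula for $|x^1_t - x^2_t|^2$, a triangle inequality to re-center the control difference at $x^1_t$ using the Lipschitz regularity of $u_2 \in \mU$, Gronwall, and then Proposition \ref{prop:rho} for the final corollary. The justification of dropping the martingale term via boundedness and localization is a small extra remark not spelled out in the paper, but otherwise the arguments coincide.
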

\begin{proof}
We denote $b^i_t=b(x^i_t, u_i(t,x^i_t))$, $\sigma^i_t=\sigma(x^i_t, u_i(t,x^i_t))$ for $i=1,2$, so $\rd x^i_t = b^i_t \,\rd t + \sigma^i_t \,\rd W_t$. By It\^o's lemma,
$$\rd \abs{x^1_t-x^2_t}^2 = \sqbra{\abs{\sigma^1_t-\sigma^2_t}^2 + 2\inner{x^1_t-x^2_t}{b^1_t-b^2_t}} \rd t + 2(x^1_t-x^2_t)\tp (\sigma^1_t-\sigma^2_t) \,\rd W_t.$$
Integrate and take expectation, we obtain
\begin{equation}\label{eq:square_diffx}
\EE \abs{x^1_T-x^2_T}^2 = \EE\abs{x^1_0-x^2_0}^2 + \EE \int_0^T \sqbra{\abs{\sigma^1_t-\sigma^2_t}^2 + 2\inner{x^1_t-x^2_t}{b^1_t-b^2_t}} \rd t.
\end{equation}
By the Lipschitz condition in Assumption \ref{assump:basic},
\begin{align*}
& \quad \abs{b^1_t - b^2_t} \le L \abs{x^1_t-x^2_t} + L \abs{u_1(t,x^1_t) - u_2(t,x^2_t)} \\
& \le (L+L^2) \abs{x^1_t-x^2_t} + L \abs{u_1(t,x^1_t) - u_2(t,x^1_t)}.
\end{align*}
So, 
\begin{equation}\label{eq:diffb_bound}
\abs{b^1_t - b^2_t}^2 \le 2(L+L^2)^2 \abs{x^1_t-x^2_t}^2 + 2L^2 \abs{u_1(t,x^1_t) - u_2(t,x^1_t)}^2.
\end{equation}
Similarly,
\begin{equation}\label{eq:diffsigma_bound}
\abs{\sigma^1_t - \sigma^2_t}^2 \le 2(L+L^2)^2 \abs{x^1_t-x^2_t}^2 + 2L^2 \abs{u_1(t,x^1_t) - u_2(t,x^1_t)}^2.
\end{equation}
Applying Cauchy's inequality, and substituting \eqref{eq:diffb_bound} and \eqref{eq:diffsigma_bound} into \eqref{eq:square_diffx}, we obtain
\begin{equation}\label{eq:square_diffx2}
\begin{aligned}
\EE \abs{x^1_T-x^2_T}^2 \le \EE\abs{x^1_0-x^2_0}^2 + \EE \int_0^T \sqbra{\abs{\sigma^1_t-\sigma^2_t}^2 + \abs{x^1_t-x^2_t}^2 + \abs{b^1_t-b^2_t}^2 }\rd t\\
\le \EE\abs{x^1_0-x^2_0}^2 + \EE \int_0^T \sqbra{17L^4 \abs{x^1_t-x^2_t}^2 + 4L^2 \abs{u_1(t,x^1_t) - u_2(t,x^1_t)}^2}\rd t
\end{aligned}
\end{equation}
Note that \eqref{eq:square_diffx2} still holds if we replace $T$ by some $T'<T$, so we can apply Gronwall's inequality and obtain
\begin{equation}\label{eq:square_diffx3}
\EE \abs{x^1_T-x^2_T}^2 \le e^{17L^4T} \EE\abs{x^1_0-x^2_0}^2 + 4L^2 e^{17L^4T} \EE \sqbra{ \int_0^T \abs{u_1(t,x^1_t) - u_2(t,x^1_t)}^2 \rd t}.
\end{equation}
Again, \eqref{eq:square_diffx3} still holds if we replace $T$ by some $T'<T$, so \eqref{eq:Gronwall1} holds. Moreover, if $x_0^1 = x_0^2 \sim \text{Unif}(\mX)$, then by Proposition \ref{prop:rho},
\begin{align*}
& \quad \EE \int_0^T \abs{u_1(t,x^1_t) - u_2(t,x^1_t)}^2 \rd t \\
& = \int_{\mX}\int_0^T \rho^{1}(t,x) \abs{u_1(t,x) - u_2(t,x)}^2 \rd t \rd x \le \rho_1 \norm{u_1-u_2}_{L^2}^2.
\end{align*}
Therefore, \eqref{eq:Gronwall3} holds.
\end{proof}

\begin{lem}\label{lem:regularity_Vu}
Under Assumption \ref{assump:basic}, there exists a positive constant $C_2$ s.t. for any two control functions $u_1, u_2 \in \mU$, we have
\begin{equation}\label{eq:regularity_Vu}
\normHtwo{V_{u_1} - V_{u_2}} \le C_2 \norm{u_1-u_2}_{L^2}.
\end{equation}
\end{lem}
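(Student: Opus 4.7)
The plan is to translate the lemma into an estimate for a linear parabolic equation. Writing $W := V_{u_1} - V_{u_2}$ and subtracting the two HJ equations \eqref{eq:HJ}, then using the explicit form
\[
G(t,x,u,p,P) = \tfrac12 \Tr(P \sigma(x,u)\sigma(x,u)\tp) + \inner{p}{b(x,u)} - r(x,u),
\]
I would freeze the second-order and drift coefficients at $u_2$ and collect the remaining differences into a single source term. With the shorthand $\sigma_i := \sigma(\cdot, u_i)$, $b_i := b(\cdot, u_i)$, $r_i := r(\cdot, u_i)$, this yields the backward linear parabolic equation
\[
\pt W + \tfrac12 \Tr\parentheses{\sigma_2 \sigma_2\tp \nx^2 W} + \inner{b_2}{\nx W} = F, \qquad W(T, \cdot) = 0,
\]
with source
\[
F := r_2 - r_1 - \tfrac12 \Tr\bigl(\nx^2 V_{u_1} (\sigma_1 \sigma_1\tp - \sigma_2 \sigma_2\tp)\bigr) - \inner{\nx V_{u_1}}{b_1 - b_2}.
\]

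Next I would control the source pointwise. The Lipschitz bounds in Assumption \ref{assump:basic}, together with the uniform boundedness of $u_1, u_2 \in \mU$, give
\[
|r_1 - r_2| + |b_1 - b_2| + |\sigma_1 \sigma_1\tp - \sigma_2 \sigma_2\tp| \le C\,|u_1 - u_2|,
\]
while the a priori Schauder bounds recorded beneath Assumption \ref{assump:u_smooth} provide $\|\nx V_{u_1}\|_{\infty} + \|\nx^2 V_{u_1}\|_{\infty} \le C$. Hence $|F(t,x)| \le C |u_1(t,x) - u_2(t,x)|$ pointwise, and so $\|F\|_{L^2([0,T] \times \mX)} \le C\|u_1 - u_2\|_{L^2}$. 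Since $\sigma_2 \sigma_2\tp$ is bounded and uniformly elliptic with ellipticity constant at least $\sigma_0$, and $b_2$ is bounded, the classical $W^{2,1}_2$ estimate for linear parabolic equations on the torus (e.g., \cite{ladyvzenskaja1988linear}, applied after the time reversal $t \mapsto T-t$ to convert the backward problem into a forward one with zero initial data) gives
\[
\|\pt W\|_{L^2} + \|\nx^2 W\|_{L^2} + \|\nx W\|_{L^2} + \|W\|_{L^2} \le C\,\|F\|_{L^2}.
\]
Combining the two displays yields $\normHtwo{W} \le C_2 \|u_1 - u_2\|_{L^2}$ with $C_2$ depending only on $n$, $K$, and $T$.

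The main obstacle is really just the first step: correctly rewriting the difference of two fully nonlinear HJ equations as a linear parabolic equation in $W$ with an $L^2$-controlled forcing. Once this decomposition is in hand, no iteration or bootstrap is needed, because the already assumed $C^{2,4}$ bound on $V_{u_1}$ supplies exactly the $L^\infty$ control on $\nx V_{u_1}$ and $\nx^2 V_{u_1}$ required to estimate the source, and standard parabolic regularity finishes the argument.
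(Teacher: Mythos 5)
Your proof is correct and takes a genuinely different route from the paper. The paper argues probabilistically: it applies It\^o's lemma and the adjoint BSDE $(p_t,q_t)=(-\nx V_u,\,-\nx^2 V_u\,\sigma)$ from \eqref{eq:adjoint1}, proves a stochastic Gronwall inequality (Lemma \ref{lem:gronwall}), and controls the second-order term by slicing $[0,T]$ into pieces of length $\delta_0 \le 1/40K^2$, integrating over the slicing parameter, and appealing to the uniform ellipticity of $\sigma$. You linearize deterministically: subtracting the two HJ equations \eqref{eq:HJ} and freezing the second-order and drift coefficients at $u_2$ turns $W = V_{u_1}-V_{u_2}$ into the unique solution of a \emph{linear}, non-divergence-form, backward parabolic equation with bounded smooth coefficients, zero terminal datum, and a source $F$ that is pointwise $O(|u_1-u_2|)$ thanks to the Lipschitz bounds of Assumption \ref{assump:basic} and the a priori $C^{2,4}$ bound on $V_{u_1}$; the classical $W^{2,1}_2$ estimate then gives \eqref{eq:regularity_Vu} in one stroke. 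Your route is considerably shorter, and the key structural observation---that the difference of two fully nonlinear HJ equations becomes linear once one of the value functions is a priori regular---is exactly what the assumed classical regularity makes available. The paper's BSDE route avoids invoking parabolic $L^2$ theory and is therefore somewhat more portable to settings with weaker regularity on $V_u$, but in the present classical framework it is the more laborious of the two. One point you should make explicit to be fully rigorous: the constant in the $W^{2,1}_2$ estimate for non-divergence-form operators depends on a modulus of continuity of the leading coefficients $a_{ij}(t,x)=D_{ij}(x,u_2(t,x))$, not only on their $L^\infty$ bound and the ellipticity constant $\sigma_0$; this modulus is uniform over $u_2\in\mU$ because $\norm{u_2}_{C^{2,4}}\le K$ and $D$ is smooth, and this is what guarantees that $C_2$ depends only on $n$, $K$, $T$ and not on the particular pair $(u_1,u_2)$.
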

\begin{proof}
We firstly give some notations. Following the notations in the previous lemma, $x^1_t$ and $x^2_t$ are the state process w.r.t.{} controls $u_1$ and $u_2$, starting at $x^1_0 = x^2_0 \sim \text{Unif}(\mX)$. For $i=1,2$, we have $b^i_t=b(x^i_t, u_i(t,x^i_t))$, $\sigma^i_t=\sigma(x^i_t, u_i(t,x^i_t))$, and $r^i_t=r(x^i_t, u_i(t,x^i_t))$. We also define the following gradient processes $\nx b^i_t=\nx b(x^i_t, u_i(t,x^i_t))$, $\nx \sigma^i_t =\nx \sigma(x^i_t, u_i(t,x^i_t))$, and $\nx r^i_t = \nx r(x^i_t, u_i(t,x^i_t))$. Here please note that $\nx$ only operate on the first argument in $b$, $\sigma$, and $r$. By Assumption \ref{assump:basic}, for $f = b, \sigma, r, \nx b, \nx \sigma, \nx r$, we have
\begin{equation*}
\begin{aligned}
& \quad \abs{f^1_t-f^2_t} = \abs{f(x^1_t, u_1(t,x^1_t))-f(x^2_t, u_2(t,x^2_t))}\\
& \le L \abs{x^1_t-x^2_t} + L \abs{u_1(t,x^1_t) - u_2(t,x^2_t)} \le (L+L^2) \abs{x^1_t-x^2_t} + L \abs{u_1(t,x^1_t) - u_2(t,x^1_t)}
\end{aligned}
\end{equation*}
and hence
\begin{equation*}
\abs{f^1_t-f^2_t}^2 \le 2(L+L^2)^2 \abs{x^1_t-x^2_t}^2 + 2L^2 \abs{u_1(t,x^1_t) - u_2(t,x^1_t)}^2.
\end{equation*}
If we make an integration and apply Lemma \ref{lem:gronwall}, we obtain
\begin{equation}\label{eq:all1-2}
\EE \int_0^T \abs{f^1_t-f^2_t}^2 \rd t \le C \EE \sqbra{ \int_0^T \abs{u_1(t,x^1_t) - u_2(t,x^1_t)}^2 \rd t} \le C \norm{u_1-u_2}_{L^2}^2.
\end{equation}
Next, we will show \eqref{eq:regularity_Vu} step by step.

\emph{Step 1.} We want to show
\begin{equation}\label{eq:regVu_0}
\norm{V_{u_1} - V_{u_2}}_{L^2} \le C \norm{u_1-u_2}_{L^2}.
\end{equation}
Applying It\^o's lemma on $V_{u_i}(t,x^i_t)$, we obtain
$$h(x^i_T) = V_{u_i}(0,x_0) + \int_0^T \parentheses{\partial_t V_{u_i}(t,x^i_t) + \mG_{u_i}V_{u_i}(t,x^i_t) }\rd t + \int_0^T \nx V_{u_i}(t,x^i_t)\tp \sigma^i_t \rd W_t,$$
where $\mG_{u_i}$ is the infinitesimal generator of the SDE \eqref{eq:SDE_X} under control $u_i$. Applying the HJ equation \eqref{eq:HJ} in the drift term and rearranging the terms, we get
\begin{equation}\label{eq:diff_V0}
V_{u_i}(0,x_0) = h(x^i_T) + \int_0^T r^i_t \rd t - \int_0^T \nx V_{u_i}(t,x^i_t)\tp \sigma^i_t \rd W_t.
\end{equation}
So,
\begin{equation}\label{eq:diff_v0EE}
V_{u_1}(0,x_0) - V_{u_2}(0,x_0) = \EE\sqbra{ h(x^1_T) -h(x^2_T) + \int_0^T (r^1_t - r^2_t) \rd t ~\Big|~ x_0}.
\end{equation}
Therefore,
\begin{equation*}
\begin{aligned}
& \quad \int_{\mX} \abs{V_{u_1}(0,x) - V_{u_2}(0,x)}^2 \rd x = \EE \sqbra{\parentheses{V_{u_1}(0,x_0) - V_{u_2}(0,x_0)}^2}\\
& = \EE \sqbra{\parentheses{\EE\sqbra{ h(x^1_T) -h(x^2_T) + \int_0^T (r^1_t - r^2_t) \rd t ~\Big|~ x_0}}^2} \\
& \le \EE \sqbra{\parentheses{\EE\sqbra{ L\abs{x^1_T - x^2_T} + \int_0^T \abs{r^1_t - r^2_t} \rd t ~\Big|~ x_0}}^2} \\
& \le \EE \sqbra{\parentheses{ L\abs{x^1_T - x^2_T} + \int_0^T \abs{r^1_t - r^2_t} \rd t }^2} \\
& \le \EE \sqbra{ 2L^2 \abs{x^1_T - x^2_T}^2 + 2T \int_0^T \abs{r^1_t - r^2_t}^2 \rd t} \le C \norm{u_1-u_2}_{L^2}^2,
\end{aligned}
\end{equation*}
where we have consecutively used: $x_0 \sim \text{Unif}(\mX)$; equation \eqref{eq:diff_v0EE}; Lipschitz condition of $h$ in Assumption \ref{assump:basic}; Jensen's inequality and tower property; Cauchy's inequality; Lemma \ref{lem:gronwall} and \eqref{eq:all1-2}. Therefore, we have shown
\begin{equation}\label{eq:diff_v0norm}
\norm{V_{u_1}(0,\cdot) - V_{u_2}(0,\cdot)}_{L^2}^2 \le C \norm{u_1-u_2}_{L^2}^2
\end{equation}
where this constant $C$ only depends on $K,n,T$. Also, \eqref{eq:diff_v0norm} holds with the same $C$ if the total time span $T$ decreases. Therefore, we can reformulate the control problem such that it start at $t \in (0,T)$ instead of $0$. Then the new state process starts at $x^i_t \sim \text{Unif}(\mX)$ and the new value function coincide with $V_{u_i}$ on $[t,T]$ by definition \eqref{eq:value}. We also remark that the constants $\rho_0, \rho_1$ in Proposition \ref{prop:rho} remain the same because $T$ decreases. Applying the argument for \eqref{eq:diff_v0norm} on the new control problem gives us
\begin{equation*}
\norm{V_{u_1}(t,\cdot) - V_{u_2}(t,\cdot)}_{L^2}^2 \le C \int_t^T \norm{u_1(s,\cdot)-u_2(s,\cdot)}_{L^2}^2 \rd s \le C \norm{u_1-u_2}_{L^2}^2.
\end{equation*}
Making an integration in $t$ gives us \eqref{eq:regVu_0}.

\emph{Step 2.} We want to show
\begin{equation}\label{eq:regVu_1}
\norm{\nx V_{u_1} - \nx V_{u_2}}_{L^2} \le C \norm{u_1 - u_2}_{L^2}.
\end{equation}
We recall that the first order adjoint equation is given by \eqref{eq:adjoint1}. We denote $p^i_t = -\nx V_{u_i}(t,x^i_t)$ and $q^i_t = -\nx^2 V_{u_i}(t,x^i_t) \sigma^i_t$ for $i=1,2$. They satisfy the equations
\begin{equation}\label{eq:adjoint2}
\left\{ \begin{aligned}
\rd p^i_t & = -\sqbra{(\nx b^i_t)\tp p^i_t + \nx \Tr\parentheses{(\sigma^i_t)\tp q^i_{t}} - \nx r^i_t} \rd t + q^i_t ~ \rd W_t \\
p^i_T & = -\nx h(x^i_T).
\end{aligned} \right.
\end{equation}
By assumption \ref{assump:basic}, we have $\abs{p^i_t} \le K$ and $\abs{q^i_t} \le K^2$.
By \eqref{eq:diff_V0},
$$\int_0^T \parentheses{(p^1_t)\tp \sigma^1_t - (p^2_t)\tp \sigma^2_t} \rd W_t = h(x^1_T)-h(x^2_T) + \int_0^T (r^1_t-r^2_t) \rd t - \parentheses{V_{u_1}(0,x_0) - V_{u_2}(0,x_0)}.$$
Taking a square expectation and using a Cauchy inequality, we obtain
\begin{equation}\label{eq:diff_psigma_square}
\begin{aligned}
& \quad \EE \int_0^T \abs{(p^1_t)\tp \sigma^1_t - (p^2_t)\tp \sigma^2_t}^2 \rd t = \EE \parentheses{\int_0^T \parentheses{(p^1_t)\tp \sigma^1_t - (p^2_t)\tp \sigma^2_t} \rd W_t}^2\\
& \le 3\EE \sqbra{\abs{h(x^1_T)-h(x^2_T)}^2 + T \int_0^T \abs{r^1_t-r^2_t}^2 \rd t + \abs{V_{u_1}(0,x_0) - V_{u_2}(0,x_0)}^2}\\
& \le 3L^2 \EE \abs{x^1_T - x^2_T}^2 + 3T ~ \EE \int_0^T \abs{r^1_t-r^2_t}^2 \rd t + 3 \norm{V_{u_1}(0,\cdot) - V_{u_2}(0,\cdot)}_{L^2}^2\\
& \le C \norm{u_1-u_2}_{L^2}^2,
\end{aligned}
\end{equation}
where the last inequality is because of the Gronwall inequality, estimate \eqref{eq:all1-2}, and the arguments in \emph{Step 1}. Also note that
$$(p^1_t)\tp \sigma^1_t - (p^2_t)\tp \sigma^2_t = (p^1_t)\tp \sigma^1_t - (p^2_t)\tp \sigma^1_t + (p^2_t)\tp \sigma^1_t - (p^2_t)\tp \sigma^2_t,$$
so
\begin{equation}\label{eq:diff_psigma1_square}
\abs{(p^1_t)\tp \sigma^1_t - (p^2_t)\tp \sigma^1_t}^2 \le 2\abs{(p^1_t)\tp \sigma^1_t - (p^2_t)\tp \sigma^2_t}^2 + 2K^2 \abs{\sigma^1_t - \sigma^2_t}^2.
\end{equation}
Therefore, taking an integration and expectation, we obtain
\begin{equation}\label{eq:diff_psquare}
\EE\int_0^T \abs{p^1_t-p^2_t}^2 \rd t \le \dfrac{1}{2 \sigma_0} \EE\int_0^T \abs{(p^1_t)\tp \sigma^1_t - (p^2_t)\tp \sigma^1_t}^2 \rd t \le C \norm{u_1-u_2}_{L^2}^2,
\end{equation}
where the first inequality is due to the uniform ellipticity assumption and the second is because of \eqref{eq:diff_psigma1_square}, \eqref{eq:diff_psigma_square}, and \eqref{eq:all1-2}.
Next, since
$$p^2_t - p^1_t = \nx V_{u_1}(t,x^1_t) - \nx V_{u_2}(t,x^1_t) + \nx V_{u_2}(t,x^1_t) - \nx V_{u_2}(t,x^2_t),$$
we have
\begin{equation}\label{eq:diff_nxV}
\begin{aligned}
& \quad \abs{\nx V_{u_1}(t,x^1_t) - \nx V_{u_2}(t,x^1_t)}^2 \\
&\le 2 \abs{p^1_t - p^2_t}^2 + 2 \abs{\nx V_{u_2}(t,x^1_t) - \nx V_{u_2}(t,x^2_t)}^2\\
&\le 2 \abs{p^1_t - p^2_t}^2 + 2 L^2 \abs{x^1_t-x^2_t}^2.
\end{aligned}
\end{equation}
Therefore,
\begin{equation*}
\begin{aligned}
& \quad \norm{\nx V_{u_1} - \nx V_{u_2}}_{L^2}^2 \le \dfrac{1}{\rho_0} \EE \int_0^T \abs{\nx V_{u_1}(t,x^1_t) - \nx V_{u_2}(t,x^1_t)}^2 \rd t \\
& \le C \EE \int_0^T \parentheses{\abs{p^1_t - p^2_t}^2 + \abs{x^1_t-x^2_t}^2} \rd t \le C \norm{u_1-u_2}_{L^2}^2,
\end{aligned}
\end{equation*}
where we have consecutively used: Proposition \ref{prop:rho}; equation \eqref{eq:diff_nxV}; equation \eqref{eq:diff_psquare} and Lemma \ref{lem:gronwall}. Therefore \eqref{eq:regVu_1} holds.

\emph{Step 3.} We want to show
\begin{equation}\label{eq:diff_qsquare}
\EE \int_0^T \abs{q^1_t - q^2_t}^2 \rd t \le C \norm{u_1-u_2}_{L^2}^2.
\end{equation}
The analysis for the second order derivative is the most difficult. We need to cut the interval $[0,T]$ into small pieces and estimate them separately. Let $N$ be an integer s.t. $\delta_0 := T/N \le 1/40K^2$. We denote $t_k = k \delta_0$ as time stamps. We will do the estimate for each interval $[t_{k-1},t_{k}]$ for $k=1,2,\ldots,N$. Computing the difference of the two adjoint equations \eqref{eq:adjoint2} gives us
\begin{equation}\label{eq:adjoint_diff}
\begin{aligned}
(q^1_t - q^2_t) \rd W_t & = \rd (p^1_t-p^2_t) - \parentheses{\nx r^1_t - \nx r^2_t} \rd t +  \sqbra{(\nx b^1_t)\tp p^1_t - (\nx b^2_t)\tp p^2_t} \rd t \\
& \quad + \sqbra{\nx \Tr\parentheses{(\sigma^1_t)\tp q^1_{t}} - \nx \Tr\parentheses{(\sigma^2_t)\tp q^2_{t}}}  \rd t.
\end{aligned}
\end{equation}

\emph{Step 3.1.} We consider $k=N$ first. Let $\delta \in [\delta_0, 2\delta_0]$. If we integrate \eqref{eq:adjoint_diff} on $[T-\delta,T]$  we obtain
\begin{equation*}
\begin{aligned}
& \int_{T-\delta}^T \parentheses{q^1_t - q^2_t} \rd W_t = -\parentheses{\nx h(x^1_T) - \nx h(x^2_T)} + \parentheses{p^1_{T-\delta} - p^2_{T-\delta}} - \int_{T-\delta}^T \parentheses{\nx r^1_t - \nx r^2_t} \rd t \\
& \quad + \int_{T-\delta}^T \sqbra{(\nx b^1_t)\tp p^1_t - (\nx b^2_t)\tp p^2_t} \rd t 
+ \int_{T-\delta}^T \sqbra{\nx \Tr\parentheses{(\sigma^1_t)\tp q^1_{t}} - \nx \Tr\parentheses{(\sigma^2_t)\tp q^2_{t}}}  \rd t.
\end{aligned}
\end{equation*}
We take a square expectation, apply Cauchy's inequalities, and get
\begin{equation}\label{eq:5terms_diffq}
\begin{aligned}
& \EE \int_{T-\delta}^T \abs{q^1_t - q^2_t}^2 \rd t \le 5 \EE \left[  \abs{\nx h(x^1_T) - \nx h(x^2_T)}^2 + \abs{p^1_{T-\delta} - p^2_{T-\delta}}^2  \right. \\
& \quad + \delta\int_{T-\delta}^T \abs{\nx r^1_t - \nx r^2_t}^2 \rd t + \delta \int_{T-\delta}^T \abs{(\nx b^1_t)\tp p^1_t - (\nx b^2_t)\tp p^2_t}^2 \rd t \\
& \quad \left. + \delta \int_{T-\delta}^T \abs{\nx \Tr\parentheses{(\sigma^1_t)\tp q^1_{t}} - \nx \Tr\parentheses{(\sigma^2_t)\tp q^2_{t}}}^2 \rd t \right] \\
& =: (\rom{1})+ (\rom{2})+ (\rom{3})+ (\rom{4})+ (\rom{5}).
\end{aligned}
\end{equation}
Next, we bound these terms one by one. For $(\rom{1})$, we have
\begin{equation}\label{eq:term1_diffq}
(\rom{1}) \le 5L^2 \EE \abs{x^1_T-x^2_T}^2 \le 5L^2C_1 \norm{u_1 - u_2}_{L^2}^2,
\end{equation}
where we used the Lipschitz condition in Assumption \ref{assump:basic} and Lemma \ref{lem:gronwall}. We skip $(\rom{2})$. For $(\rom{3})$, we have
\begin{equation}\label{eq:term3_diffq}
(\rom{3}) \le C \norm{u_1 - u_2}_{L^2}^2
\end{equation}
because of \eqref{eq:all1-2}. For $(\rom{4})$, we have
\begin{equation}\label{eq:term4_diffq}
\begin{aligned}
& \quad (\rom{4}) = 5\delta \,\EE \int_{T-\delta}^T \abs{(\nx b^1_t)\tp p^1_t - (\nx b^2_t)\tp p^2_t}^2 \rd t \\
& \le 10 \delta \,\EE \int_{T-\delta}^T \parentheses{\abs{(\nx b^1_t)\tp p^1_t - (\nx b^1_t)\tp p^2_t}^2 + \abs{(\nx b^1_t)\tp p^2_t - (\nx b^2_t)\tp p^2_t}^2}\rd t \\
& \le 10 \delta \,\EE \int_{T-\delta}^T K^2 \parentheses{ \abs{p^1_t - p^2_t}^2 + \abs{\nx b^1_t-\nx b^2_t}^2}\rd t \le C \norm{u_1-u_2}_{L^2}^2,
\end{aligned}
\end{equation}
where the second inequality is because of the boundedness of $\nx b$ and $\nx V_{u_i}$ and the third is because of \eqref{eq:diff_psquare} and \eqref{eq:all1-2}. For $(\rom{5})$, we have
\begin{equation}\label{eq:term5_diffq}
\begin{aligned}
& \quad (\rom{5}) = 5\delta \,\EE \int_{T-\delta}^T \abs{\nx \Tr\parentheses{(\sigma^1_t)\tp q^1_{t}} - \nx \Tr\parentheses{(\sigma^2_t)\tp q^2_{t}}}^2 \rd t\\
& \le 10 \delta \,\EE \int_{T-\delta}^T \left( \abs{\nx \Tr\parentheses{(\sigma^1_t)\tp q^1_{t}} - \nx \Tr\parentheses{(\sigma^1_t)\tp q^2_{t}}}^2 \right. \\
& ~~~~~~~~~~~~ \left. + \abs{\nx \Tr\parentheses{(\sigma^1_t)\tp q^2_{t}} - \nx \Tr\parentheses{(\sigma^2_t)\tp q^2_{t}}}^2 \right) \rd t \\
& \le 10 \delta \,\EE \int_{T-\delta}^T \parentheses{K^2 \abs{ q^1_{t} - q^2_{t}}^2 + K^4 \abs{\sigma^1_t - \sigma^2_t}} \rd t \\
& \le \dfrac{1}{2} \EE \int_{T-\delta}^T \abs{q^1_t - q^2_t}^2 \rd t + C\norm{u_1-u_2}_{L^2}^2,
\end{aligned}
\end{equation}
where the second inequality is because boundedness of $\nx \sigma$ and $\nx^2 V_{u_i}$ and the third is because $\delta \le 2\delta_0 \le 1/20K^2$ and \eqref{eq:all1-2}. Substituting \eqref{eq:term1_diffq}, \eqref{eq:term3_diffq}, \eqref{eq:term4_diffq}, and \eqref{eq:term5_diffq} into \eqref{eq:5terms_diffq}, we obtain
\begin{equation}\label{eq:diff_qtemp}
\dfrac{1}{2} \EE \int_{T-\delta}^T \abs{q^1_t - q^2_t}^2 \rd t \le 5\delta \EE \abs{p^1_{T-\delta} - p^2_{T-\delta}}^2 + C\norm{u_1-u_2}_{L^2}^2.
\end{equation}
Integrating \eqref{eq:diff_qtemp} w.r.t. $\delta$ on $[\delta_0, 2\delta_0]$, we obtain
\begin{equation*}
\begin{aligned}
& \quad \dfrac{1}{2} \delta_0 \EE \int_{T-\delta_0}^T \abs{q^1_t - q^2_t}^2 \rd t \le \int_{\delta_0}^{2\delta_0} \eqref{eq:diff_qtemp} \text{LHS} \,\rd \delta \le \int_{\delta_0}^{2\delta_0} \eqref{eq:diff_qtemp} \text{RHS} \,\rd \delta \\
& \le C \EE \int_{T-2\delta_0}^{T-\delta_0} \abs{p^1_t - p^2_t}^2 \rd t + C\norm{u_1-u_2}_{L^2}^2 \le C\norm{u_1-u_2}_{L^2}^2,
\end{aligned}
\end{equation*}
where the last inequality is due to \eqref{eq:diff_psquare}. Therefore, we have
\begin{equation}\label{eq:diff_q_k=N}
\EE \int_{T-\delta_0}^T \abs{q^1_t - q^2_t}^2 \rd t \le C\norm{u_1-u_2}_{L^2}^2.
\end{equation}

\emph{Step 3.2.} We consider $k=2,3,\ldots,N-1$ next. Let $\delta \in [0,\delta_0]$. We integrate \eqref{eq:adjoint_diff} on $[t_{k-1}-\delta, t_{k+1} - \delta]$ and obtain
\begin{equation*}
\begin{aligned}
& \int_{t_{k-1}-\delta}^{t_{k+1}-\delta} \parentheses{q^1_t - q^2_t} \rd W_t = \parentheses{p^1_{t_{k+1}-\delta} - p^2_{t_{k+1}-\delta}} - \parentheses{p^1_{t_{k-1}-\delta} - p^2_{t_{k-1}-\delta}} \\
& \quad - \int_{t_{k-1}-\delta}^{t_{k+1}-\delta} \parentheses{\nx r^1_t - \nx r^2_t} \rd t + \int_{t_{k-1}-\delta}^{t_{k+1}-\delta} \sqbra{(\nx b^1_t)\tp p^1_t - (\nx b^2_t)\tp p^2_t} \rd t \\
& \quad + \int_{t_{k-1}-\delta}^{t_{k+1}-\delta} \sqbra{\nx \Tr\parentheses{(\sigma^1_t)\tp q^1_{t}} - \nx \Tr\parentheses{(\sigma^2_t)\tp q^2_{t}}}  \rd t.
\end{aligned}
\end{equation*}
We take a square expectation, apply Cauchy's inequalities, and get
\begin{equation}\label{eq:5terms_diffq2}
\begin{aligned}
& \EE \int_{t_{k-1}-\delta}^{t_{k+1}-\delta} \abs{q^1_t - q^2_t}^2 \rd t \le 5 \EE \left[  \abs{p^1_{t_{k+1}-\delta} - p^2_{t_{k+1}-\delta}}^2 + \abs{p^1_{t_{k-1}-\delta} - p^2_{t_{k-1}-\delta}}^2  \right.\\
& + 2\delta_0 \int_{t_{k-1}-\delta}^{t_{k+1}-\delta} \abs{\nx r^1_t - \nx r^2_t}^2 \rd t + 2\delta_0 \int_{t_{k-1}-\delta}^{t_{k+1}-\delta} \abs{(\nx b^1_t)\tp p^1_t - (\nx b^2_t)\tp p^2_t}^2 \rd t\\
& \quad \left. + 2\delta_0 \int_{t_{k-1}-\delta}^{t_{k+1}-\delta} \abs{\nx \Tr\parentheses{(\sigma^1_t)\tp q^1_{t}} - \nx \Tr\parentheses{(\sigma^2_t)\tp q^2_{t}}}^2 \rd t \right]\\
& =: (\rom{1})+ (\rom{2})+ (\rom{3})+ (\rom{4})+ (\rom{5})
\end{aligned}
\end{equation}
with a little abuse of notation for the five terms in \eqref{eq:5terms_diffq}. We bound these five terms next. The techniques are exactly the same as in \emph{Step3.1}. We keep $(\rom{1})$ and $(\rom{2})$ unchanged. $(\rom{3})$ also satisfies \eqref{eq:term3_diffq} with the same reason. $(\rom{4})$ also satisfies \eqref{eq:term4_diffq}, where we only need to modify the interval for integration in the intermediate steps. For $(\rom{5})$, using the same argument in \eqref{eq:term4_diffq}, we obtain
$$(\rom{5}) \le \dfrac{1}{2} \EE \int_{t_{k-1}-\delta}^{t_{k+1}-\delta} \abs{q^1_t - q^2_t}^2 \rd t + C\norm{u_1-u_2}_{L^2}^2.$$
Combining the estimates into \eqref{eq:5terms_diffq2}, we obtain
\begin{equation}\label{eq:diff_qtemp2}
\begin{aligned}
& \quad \dfrac{1}{2} \EE \int_{t_{k-1}-\delta}^{t_{k+1}-\delta} \abs{q^1_t - q^2_t}^2 \rd t \\
& \le 5\delta \EE \sqbra{ \abs{p^1_{t_{k+1}-\delta} - p^2_{t_{k+1}-\delta}}^2 + \abs{p^1_{t_{k-1}-\delta} - p^2_{t_{k-1}-\delta}}^2 } + C\norm{u_1-u_2}_{L^2}^2.
\end{aligned}
\end{equation}
Integrating \eqref{eq:diff_qtemp2} w.r.t. $\delta$ on $[0, \delta_0]$, we obtain
\begin{equation*}
\begin{aligned}
& \quad \dfrac{1}{2} \delta_0 \EE \int_{t_{k-1}}^{t_k} \abs{q^1_t - q^2_t}^2 \rd t \le \int_{0}^{\delta_0} \eqref{eq:diff_qtemp2} \text{LHS} \,\rd \delta \le \int_{0}^{\delta_0} \eqref{eq:diff_qtemp2} \text{RHS} \,\rd \delta \\
& \le C \EE \parentheses{\int_{t_{k-2}}^{t_{k-1}} + \int_{t_{k}}^{t_{k+1}}} \abs{p^1_t - p^2_t}^2 \rd t + C\norm{u_1-u_2}_{L^2}^2 \le C\norm{u_1-u_2}_{L^2}^2,
\end{aligned}
\end{equation*}
where the last inequality is due to \eqref{eq:diff_psquare}. Therefore, for $k=2,3,\ldots,N-1$, we have
\begin{equation}\label{eq:diff_q_k=2}
\EE \int_{t_{k-1}}^{t_k} \abs{q^1_t - q^2_t}^2 \rd t \le C\norm{u_1-u_2}_{L^2}^2.
\end{equation}

\emph{Step 3.3.} We consider $k=1$ next. In this case we cannot integrate \eqref{eq:adjoint_diff} on $[t_{k-1}-\delta, t_{k+1} - \delta]$ because $t$ should be non-negative. But we can repeat the argument in \emph{Step 3.2}, with only a slight modification of our model. We extend the value function $V(t, x)$ to $t \in [-\delta_0, 0)$ by considering a modification of the control problem starting at $-\delta_0$ instead of time $0$.

We give detailed description of this extension to confirm that it works. Let us use a ``hat'' notation to denote the quantities for the new control problem. Firstly, the control functions $u_i(t,x)$ need to be extended to $[-\delta_0,T] \times \mX$ such that $\widehat{u}_i(t,x) = u_i(t,x)$ on $[0,T] \times \mX$. By definition \eqref{eq:value}, the new value functions $\widehat{V}_{u_i}: [-\delta_0, T] \times \mX \to \RR$ coincide with $V_{u_i}$ on $[0,T] \times \mX$. We also require the extension of $u_i$ to be smooth such that the bounds for control functions in Assumption \ref{assump:u_smooth} still hold and 
$$\norm{\widehat{u}_1 - \widehat{u}_2}_{L^2}^2 \le 2 \norm{u_1-u_2}_{L^2}^2.$$
The bounds for the value function (obtained from Schauder estimate) should still hold. The new state process start at $\widehat{x}^i_{-\delta_0} \sim \text{Unif}(\mX)$. The bounds $\rho_0, \rho_1$ in Proposition \ref{prop:rho} may need to change because the total time span is increased from $T$ to $T+\delta_0$, but they are still absolute constants if we follow the proof for Proposition \ref{prop:rho}. The Gronwall inequality should also hold, but with a larger constant $C_1$. Inequality \eqref{eq:all1-2} should still hold, with interval to be integrated replaced by $[-\delta_0 , T]$.

With these clarifications, we can repeat the arguments in \emph{Step 3.2} and obtain
\begin{equation}\label{eq:diff_q_k=1}
\EE \int_{0}^{\delta_0} \abs{\widehat{q}^1_t - \widehat{q}^2_t}^2 \rd t \le C \norm{\widehat{u}_1 - \widehat{u}_2}_{L^2}^2 \le 2C \norm{u_1-u_2}_{L^2}^2.
\end{equation}
Therefore, we did not perfectly recover \eqref{eq:diff_qsquare}, but the results \eqref{eq:diff_q_k=N}, \eqref{eq:diff_q_k=2}, and \eqref{eq:diff_q_k=1} are enough for us to proceed the next step. We will finish proving \eqref{eq:diff_qsquare} later.

\emph{Step 4.} We want to show
\begin{equation}\label{eq:regVu_2}
\norm{\nx^2 V_{u_1} - \nx^2 V_{u_2}}_{L^2} \le C \norm{u_1 - u_2}_{L^2}.
\end{equation}
Since
\begin{equation}\label{eq:diff_q_3terms}
\begin{aligned}
q^1_t - q^2_t & = -\nx^2 V_{u_1}(t,x^1_t) \sigma^1_t + \nx^2 V_{u_2}(t,x^2_t) \sigma^2_t = -\nx^2 V_{u_1}(t,x^1_t) \sigma^1_t + \nx^2 V_{u_2}(t,x^1_t) \sigma^1_t\\
& -\nx^2 V_{u_2}(t,x^1_t) \sigma^1_t + \nx^2 V_{u_2}(t,x^2_t) \sigma^1_t -\nx^2 V_{u_2}(t,x^2_t) \sigma^1_t + \nx^2 V_{u_2}(t,x^2_t) \sigma^2_t,
\end{aligned}
\end{equation}
we have
\begin{equation}\label{eq:diff_Vxxtemp}
\begin{aligned}
& \quad \abs{\nx^2 V_{u_1}(t,x^1_t) \sigma^1_t - \nx^2 V_{u_2}(t,x^1_t) \sigma^1_t}^2 \le 3 \abs{-\nx^2 V_{u_2}(t,x^1_t) \sigma^1_t - \nx^2 V_{u_2}(t,x^2_t) \sigma^1_t}^2 \\
& \hspace{1.8in} 3\abs{\nx^2 V_{u_2}(t,x^2_t) \sigma^1_t - \nx^2 V_{u_2}(t,x^2_t) \sigma^2_t}^2 + 3\abs{q^1_t - q^2_t}^2 \\
& \le 3K^2 L^2 \abs{x^1_t-x^2_t}^2 + 3K^2 \abs{\sigma^1_t - \sigma^2_t}^2 + 3\abs{q^1_t - q^2_t}^2
\end{aligned}
\end{equation}
Therefore,
\begin{equation}\label{eq:diff_Vxx1}
\begin{aligned}
& \quad \int_{\delta_0}^T \norm{\nx^2 V_{u_1}(t,\cdot) - \nx^2 V_{u_2}(t,\cdot)}_{L^2}^2 \rd t \le \dfrac{1}{\rho_0} \EE \int_{\delta_0}^T \abs{\nx^2 V_{u_1}(t,x^1_t) - \nx^2 V_{u_2}(t,x^1_t)}^2 \rd t \\
& \le \dfrac{1}{2\sigma_0 \rho_0} \EE \int_{\delta_0}^T \abs{\nx^2 V_{u_1}(t,x^1_t) \sigma^1_t - \nx^2 V_{u_2}(t,x^1_t) \sigma^1_t}^2 \rd t \\
& \le C \EE \int_{\delta_0}^T \parentheses{\abs{x^1_t-x^2_t}^2 + \abs{\sigma^1_t - \sigma^2_t}^2 + \abs{q^1_t - q^2_t}^2} \rd t \le C \norm{u_1-u_2}_{L^2}^2.
\end{aligned}
\end{equation}
where we have consecutively used: Proposition \ref{prop:rho}; uniform ellipticity of $\sigma$; equation \eqref{eq:diff_Vxxtemp}; Lemma \ref{lem:gronwall}, equations \eqref{eq:all1-2}, \eqref{eq:diff_q_k=2} and \eqref{eq:diff_q_k=N}. Applying the same argument to the new control problem that start at $t=-\delta_0$, we obtain
\begin{equation}\label{eq:diff_Vxx2}
\int_0^{\delta_0} \norm{\nx^2 V_{u_1}(t,\cdot) - \nx^2 V_{u_2}(t,\cdot)}_{L^2}^2 \rd t \le C \norm{\widehat{u}_1 - \widehat{u}_2}_{L^2}^2 \le 2C \norm{u_1-u_2}_{L^2}^2.
\end{equation}
Combining \eqref{eq:diff_Vxx1} and \eqref{eq:diff_Vxx2}, we obtain \eqref{eq:regVu_2}. As a follow up, with \eqref{eq:regVu_2} holds, we can use \eqref{eq:diff_q_3terms} to bound $\abs{q^1_t - q^2_t}$, and obtain \eqref{eq:diff_qsquare}. Therefore, the result for \emph{Step 3} is perfectly proved.

Finally, combining \eqref{eq:regVu_0}, \eqref{eq:regVu_1}, and \eqref{eq:regVu_2}, we get \eqref{eq:regularity_Vu}, so the lemma is proved. We remark that we can also write down the second order adjoint equation (see \cite{peng1990general} for example) and prove that
\begin{equation*}
\norm{\nx^3 V_{u_1} - \nx^3 V_{u_2}}_{L^2} \le C \norm{u_1 - u_2}_{L^2},
\end{equation*}
using the same method in \emph{Step 3-4}.
\end{proof}

\begin{lem}\label{lem:J_quadratic}
Under Assumption \ref{assump:basic}, there exists a positive constant $C_3$ s.t.
\begin{equation}\label{eq:J_quadratic}
J[u]- J[u^*] \le C_3 \norm{u-u^*}_{L^2}^2
\end{equation}
for any $u \in \mU$.
\end{lem}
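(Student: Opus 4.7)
The plan is to rewrite $J[u]-J[u^*]$ as an expectation of $G(t,x,u^*,\cdot)-G(t,x,u,\cdot)$ (evaluated at the $V^*$-derivatives in the $p,P$ slots), and then Taylor-expand in $u$ around the HJB maximizer $u^*$. No concavity of $G$ is needed for this one-sided bound.

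First I would apply It\^o's formula to $V^*(t,x_t)$ along the trajectory $x_t$ of \eqref{eq:SDE_X} driven by the control $u$ (not $u^*$) with $x_0 \sim \text{Unif}(\mX)$, and take expectation to kill the martingale term:
\begin{equation*}
\EE[h(x_T)] - \EE[V^*(0,x_0)] = \EE\int_0^T \sqbra{\pt V^*(t,x_t) + \mG_u V^*(t,x_t)}\,\rd t,
\end{equation*}
where $\mG_u$ is the generator of \eqref{eq:SDE_X} under $u$. From the definition \eqref{eq:generalized_Hamiltonian}, $\mG_u V^*(t,x) = -G(t,x,u(t,x),-\nx V^*,-\nx^2 V^*) - r(x,u(t,x))$, and the HJB equation \eqref{eq:HJB} gives $\pt V^*(t,x) = G(t,x,u^*(t,x),-\nx V^*,-\nx^2 V^*)$. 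Combining these with $J[u^*] = \EE[V^*(0,x_0)]$ and the definition of $J[u]$, rearrangement yields
\begin{equation*}
J[u]-J[u^*] = \EE\int_0^T \sqbra{G\parentheses{t,x_t,u^*(t,x_t),-\nx V^*,-\nx^2 V^*} - G\parentheses{t,x_t,u(t,x_t),-\nx V^*,-\nx^2 V^*}}\rd t.
\end{equation*}

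Next, since $u^*(t,x) = \argmax_u G(t,x,u,-\nx V^*,-\nx^2 V^*)$ pointwise, the first-order condition $\nbu G\parentheses{t,x,u^*(t,x),-\nx V^*,-\nx^2 V^*}=0$ holds. A Taylor expansion of $G$ in $u$ around $u^*(t,x)$ then gives
\begin{equation*}
G(t,x,u^*,\cdot)-G(t,x,u,\cdot) = -\tfrac12 (u-u^*)\tp \nabla_u^2 G\big|_{u^{\xi}}(u-u^*)
\end{equation*}
for some $u^{\xi}$ between $u(t,x)$ and $u^*(t,x)$. The Hessian $\nabla_u^2 G$ is uniformly bounded: Assumption \ref{assump:basic} controls the $u$-derivatives of $r,b,\sigma$, and the remarks following Assumption \ref{assump:u_smooth} give uniform bounds on $\nx V^*$ and $\nx^2 V^*$ (which enter $G$ linearly in the $p,P$ slots). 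Hence there is a constant $C$ depending only on $n,K,T$ with $\abs{G(t,x,u^*,\cdot)-G(t,x,u,\cdot)} \le C \abs{u(t,x)-u^*(t,x)}^2$ pointwise.

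Finally I would convert the expectation to an $L^2$ norm using the upper bound $\rho^u(t,x)\le \rho_1$ from Proposition \ref{prop:rho}:
\begin{equation*}
J[u]-J[u^*] \le C\,\EE \int_0^T \abs{u(t,x_t)-u^*(t,x_t)}^2\,\rd t = C\int_0^T\!\!\int_{\mX} \rho^u(t,x)\abs{u-u^*}^2\,\rd x\,\rd t \le C\rho_1 \norm{u-u^*}_{L^2}^2,
\end{equation*}
which is the desired estimate with $C_3 := C\rho_1$. There is no real obstacle; the only subtle point is that the Taylor expansion must be performed at the HJB maximizer $u^*$ (so that the first-order term vanishes), which is why $G$ is evaluated at $(-\nx V^*,-\nx^2 V^*)$ throughout rather than at the $V_u$-derivatives associated to the current $u$.
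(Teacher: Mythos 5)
Your proof is correct, and it takes a genuinely different route from the paper. The paper's proof differentiates $J$ along the segment $u^{\ve}=u^*+\ve\phi$ using the gradient formula of Proposition~\ref{prop:cost_derivative}, then bounds $\nbu G(t,x,u^{\ve},-\nx V^{\ve},-\nx^2 V^{\ve})$ by splitting off a term measuring $\abs{\nx V^{\ve}-\nx V^*}+\abs{\nx^2 V^{\ve}-\nx^2 V^*}$, which it controls via the stability estimate of Lemma~\ref{lem:regularity_Vu}. Your argument instead is a verification/performance-difference identity: applying It\^o's formula to $V^*$ along the trajectory generated by $u$ and invoking the HJB equation produces the exact representation
\begin{equation*}
J[u]-J[u^*]=\EE\int_0^T\sqbra{G\parentheses{t,x_t,u^*(t,x_t),-\nx V^*,-\nx^2 V^*}-G\parentheses{t,x_t,u(t,x_t),-\nx V^*,-\nx^2 V^*}}\rd t,
\end{equation*}
after which a second-order Taylor expansion of $G$ in $u$ centered at the HJB maximizer $u^*$ (where $\nbu G$ vanishes) and the upper density bound $\rho_1$ finish the job. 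This is shorter, self-contained (it requires neither Proposition~\ref{prop:cost_derivative} nor the somewhat heavy BSDE-based Lemma~\ref{lem:regularity_Vu}), and transparently shows that only boundedness of $\nabla_u^2 G$ is needed, not concavity. Everything you use is justified by the stated assumptions: $V^*\in C^{1,2}$ for It\^o's formula, bounded coefficients $b,\sigma$, and the HJB maximum condition for the vanishing first-order term. The one point worth making explicit in a final write-up is the intermediate cancellation $\mG_u V^*=-G(t,x,u,-\nx V^*,-\nx^2 V^*)-r(x,u)$, which you state correctly.
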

\begin{proof}
Denote $\ve_0 = \norm{u-u^*}_{L^2}$ and let $u = u^*+\ve_0\phi$, then $\norm{\phi}_{L^2}=1$. We denote $u^{\ve} = u^* + \ve \phi$. Denote the corresponding value function $V_{u^{\ve}}$ by $V^{\ve}$. Denote the corresponding density function by $\rho^{\ve}$, with initial condition $\rho^{\ve}(0,\cdot) \equiv 1$.
By Proposition \ref{prop:cost_derivative},
\begin{equation}\label{eq:dJdve}
\begin{aligned}
& \quad \dfrac{\rd}{\rd \ve}J[u^{\ve}] = \inner{\fd{J}{u}[u^{\ve}]}{\phi}_{L^2} \\
& = -\int_0^T \int_{\mX} \rho^{\ve}(t,x) \inner{\nbu G(t,x,u^{\ve}(t,x),-\nx V^{\ve},-\nx^2 V^{\ve})}{\phi(t,x)} \rd x \, \rd t.
\end{aligned}
\end{equation}

In order to show \eqref{eq:J_quadratic}, it is sufficient to show that $\dfrac{\rd}{\rd \ve} J[u^{\ve}] \le C\ve$ for some uniform constant $C$ (that does not depend on $\phi$), because
$$J[u^*+\ve_0\phi] - J[u^*] = \int_0^{\ve_0} \dfrac{\rd}{\rd \ve}J[u^{\ve}] \, \rd \ve.$$
We estimate $\nbu G$ in \eqref{eq:dJdve} first. 
\begin{equation}\label{eq:nuGve}
\begin{aligned}
& \quad \abs{\nbu G(t,x,u^{\ve}(t,x),-\nx V^{\ve},-\nx^2 V^{\ve})} \\
& = \abs{\nbu G(t,x,u^{\ve}(t,x),-\nx V^{\ve},-\nx^2 V^{\ve}) - \nbu G(t,x,u^{*}(t,x),-\nx V^*,-\nx^2 V^*)}\\
& \le \abs{\nbu G(t,x,u^{\ve}(t,x),-\nx V^{\ve},-\nx^2 V^{\ve}) - \nbu G(t,x,u^{*}(t,x),-\nx V^{\ve},-\nx^2 V^{\ve})}\\
& \quad + \abs{\nbu G(t,x,u^{*}(t,x),-\nx V^{\ve},-\nx^2 V^{\ve}) - \nbu G(t,x,u^{*}(t,x),-\nx V^*,-\nx^2 V^*)}\\
& =: (\rom{1}) + (\rom{2}),
\end{aligned}
\end{equation}
where we used the maximum condition \eqref{eq:max1} in the first equality. Recall that we have denoted $D=\frac12\sigma\sigma\tp$. Let us also denote $u^{\ve}(t,x)$ and $u^*(t,x)$ by $u^{\ve}$ and $u^*$ for simplicity. For $(\rom{1})$, we have
\begin{equation}\label{eq:lem3rom1}
\begin{aligned}
(\rom{1}) &\le \abs{\nbu r(x,u^{\ve}) - \nbu r(x,u^*)} +  \abs{\parentheses{\nbu b(x,u^{\ve}) - \nbu b(x,u^*)}\tp \nx V^{\ve}}\\
& \quad + \abs{\nbu \Tr\sqbra{ \parentheses{D(x,u^{\ve}) - D(x,u^*)} \nx^2 V^{\ve}}} \\
& \le L \ve \abs{\phi(t,x)} + L \ve \abs{\phi(t,x)}K + L \ve \abs{\phi(t,x)}K \le C \ve \abs{\phi(t,x)},
\end{aligned}
\end{equation}
where we have used the Lipschitz conditions in Assumption \ref{assump:basic} and boundedness of the value function's derivatives. For $(\rom{2})$, we have
\begin{equation}\label{eq:lem3rom2}
\begin{aligned}
& \quad (\rom{2}) \le \abs{ \nbu b(x,u^*) \tp \parentheses{\nx V^{\ve} - \nx V^*} } + \abs{\nbu \Tr\sqbra{D(x,u^*) \parentheses{ \nx^2 V^{\ve} - \nx^2 V^*}}}\\
& \le K \parentheses{\abs{\nx V^{\ve} - \nx V^*} + \abs{\nx^2 V^{\ve} - \nx^2 V^*}}.
\end{aligned}
\end{equation}
Combining \eqref{eq:lem3rom1} and \eqref{eq:lem3rom2} into \eqref{eq:nuGve}, we obtain
\begin{equation}\label{eq:bound_nuGve}
\begin{aligned}
& \quad \abs{\nbu G(t,x,u^{\ve}(t,x),-\nx V^{\ve},-\nx^2 V^{\ve})} \\
& \le C \parentheses{\ve\abs{\phi(t,x)} + \abs{\nx V^{\ve} - \nx V^*} + \abs{\nx^2 V^{\ve} - \nx^2 V^*}}.
\end{aligned}
\end{equation}
Therefore, \eqref{eq:dJdve} has estimate
\begin{equation*}
\begin{aligned}
& \quad \abs{\dfrac{\rd}{\rd \ve}J[u^{\ve}]} \\
& \le \rho_1 \int_0^T\int_{\mX} \parentheses{\dfrac{1}{\ve} \abs{\nbu G(t,x,u^{\ve}(t,x),-\nx V^{\ve},-\nx^2 V^{\ve})}^2 + \ve \abs{\phi(t,x)}^2} \rd x \,\rd t \\
& \le C \parentheses{\ve \norm{\phi}_{L^2}^2 + \dfrac{1}{\ve} \norm{\nx V^{\ve} - \nx V^*}_{L^2}^2 + \dfrac{1}{\ve} \norm{\nx^2 V^{\ve} - \nx^2 V^*}_{L^2}^2 + \ve \norm{\phi}_{L^2}^2} \le C\ve,
\end{aligned}
\end{equation*}
where we have consecutively used: Proposition \ref{prop:rho} and Cauchy's inequality; inequality \eqref{eq:bound_nuGve}; Lemma \ref{lem:regularity_Vu}. Therefore, \eqref{eq:J_quadratic} holds.
\end{proof}

\begin{lem}\label{lem:quadratic_Vu}
Under Assumption \ref{assump:basic}, there exists a positive constant $C_4$ s.t. for any control function $u \in \mU$, we have
\begin{equation}\label{eq:quadratic_Vu}
\normHtwo{V_{u} - V^*} \le C_4 \norm{u-u^*}_{L^2}^{1+\alpha},
\end{equation}
with $\alpha = \frac{1}{n+3}$.
\end{lem}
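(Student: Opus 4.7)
The plan is to improve the linear estimate of Lemma \ref{lem:regularity_Vu} by exploiting the first-order optimality of $u^*$, which makes the source term in the parabolic equation satisfied by $W := V_u - V^*$ quadratic in $u-u^*$ rather than linear. Specifically, subtracting the HJ equations for $V_u$ and $V^*$ and splitting the Hamiltonian difference yields
\[
\pt W + \Tr\parentheses{D(x,u)\nx^2 W} + \inner{b(x,u)}{\nx W} = -S(t,x), \qquad W(T,\cdot)=0,
\]
where $S(t,x) := G(t,x,u^*,-\nx V^*,-\nx^2 V^*) - G(t,x,u,-\nx V^*,-\nx^2 V^*)$. Since $u^*$ is the pointwise maximum of $G(t,x,\cdot,-\nx V^*,-\nx^2 V^*)$ and $G$ is uniformly strongly concave in $u$, a Taylor expansion at $u^*$ gives $0 \le S(t,x) \le C\abs{u(t,x) - u^*(t,x)}^2$. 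In particular $W \ge 0$ by Feynman--Kac, and $\norm{S}_{L^1([0,T]\times\mX)} \le C\norm{u-u^*}_{L^2}^2$.

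A pointwise-in-time $L^1$ bound on $W$ then follows from a subproblem version of Lemma \ref{lem:J_quadratic}: integrating $W(t,\cdot)$ over $\mX$ gives the excess cost of the subproblem on $[t,T]$ starting from the uniform distribution, and the proof of Lemma \ref{lem:J_quadratic} applied on $[t,T]$ yields $\norm{W(t,\cdot)}_{L^1(\mX)} \le C\norm{u-u^*}_{L^2}^2$ uniformly in $t$. Since $V_u, V^* \in C^{2,4}$, the third derivative $\nx^3 W$ is bounded, so $\nx^2 W(t,\cdot)$ is uniformly spatially Lipschitz; testing $\nx^2 W(t,x_0)$ against a smooth spatial bump $\phi_r(x) = r^{-n}\eta((x-x_0)/r)$ with $\int\phi_r = 1$ and integrating by parts twice yields $\abs{\nx^2 W(t,x_0)} \le C\norm{u-u^*}_{L^2}^2\, r^{-n-2} + Cr$, whose minimum over $r$ (attained at $r \sim \norm{u-u^*}_{L^2}^{2/(n+3)}$) gives
\[
\norm{\nx^2 W}_{L^\infty([0,T]\times\mX)} \le C\norm{u-u^*}_{L^2}^{2/(n+3)}.
\]
This is where the dimensional factor $n+3$ arises: two derivatives on the bump ($r^{-n-2}$) balanced against the first-order Taylor remainder ($r$).

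To extract the $L^2$-bound with the correct super-linear exponent, I would apply Calder\'on--Zygmund/weak-$L^1$ parabolic regularity to the PDE above: under Assumption \ref{assump:basic}, the coefficients $D(x,u(t,x))$, $b(x,u(t,x))$ are smooth and $D$ is uniformly elliptic, so the classical weak-type estimate gives $\mu\curlybra{(t,x) : \abs{\nx^2 W(t,x)} > \lambda} \le C\norm{S}_{L^1}/\lambda \le C\norm{u-u^*}_{L^2}^2/\lambda$ for all $\lambda > 0$. Writing $\norm{\nx^2 W}_{L^2}^2 = \int_0^M 2\lambda\,\mu\curlybra{\abs{\nx^2 W}>\lambda}\,\rd\lambda$ with $M := \norm{\nx^2 W}_{L^\infty}$, the weak-$L^1$ bound gives $\norm{\nx^2 W}_{L^2}^2 \le CM\norm{u-u^*}_{L^2}^2 \le C\norm{u-u^*}_{L^2}^{2+2/(n+3)}$, hence $\norm{\nx^2 W}_{L^2} \le C\norm{u-u^*}_{L^2}^{1+1/(n+3)}$. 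Analogous Morrey localizations give $\norm{W}_{L^\infty} \le C\norm{u-u^*}_{L^2}^{2/(n+1)}$ and $\norm{\nx W}_{L^\infty} \le C\norm{u-u^*}_{L^2}^{2/(n+2)}$, and combining these with the $L^1$ bound on $W$ (for $\norm{W}_{L^2}$) respectively with $\norm{\nx W}_{L^2}^2 \le \norm{W}_{L^2}\norm{\nx^2 W}_{L^2}$ by integration by parts (for $\norm{\nx W}_{L^2}$) gives exponents strictly larger than $1+1/(n+3)$, so the $\nx^2 W$-term is the bottleneck and the claimed estimate follows. The main technical obstacle is justifying the weak-$L^1$ parabolic regularity with the control-dependent coefficients; this is a classical Calder\'on--Zygmund result for linear second-order parabolic operators with smooth, uniformly elliptic coefficients, and the smoothness assumptions on $b$, $\sigma$, and $u$ in Assumption \ref{assump:basic} and Assumption \ref{assump:u_smooth} are exactly what is needed.
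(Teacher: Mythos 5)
Your route is genuinely different from the paper's, and it appears to be correct; both arguments land on the same exponent $\alpha = \tfrac{1}{n+3}$. The paper proceeds by second-order finite differences of $\pve V^{\ve}$, splits $[0,T]$ at $t=\ve^{2\alpha}$, handles the short initial interval by It\^o/Gronwall estimates on coupled trajectories (Steps 3.2--3.7), and handles the tail $(\ve^{2\alpha},T]$ by the Aronson bound $\abs{\nx^k q^{\ve}(t,x;s,y)}\lesssim (s-t)^{-(n+k)/2}$ with $k=2$. You instead notice that $W := V_u - V^*$ solves a \emph{linear} parabolic problem $\pt W + \Tr(D\nx^2 W)+\inner{b}{\nx W}=-S$ whose source $S$, because of the first-order condition $\nbu G(t,x,u^*,-\nx V^*,-\nx^2 V^*)=0$ and boundedness of $\nbu^2 G$, is nonnegative and pointwise $O(\abs{u-u^*}^2)$, so $\norm{S}_{L^1}\lesssim\norm{u-u^*}_{L^2}^2$; you then combine (i) the $L^1$ bound on $W(t,\cdot)$ (which is exactly the paper's Step~1 / Lemma~\ref{lem:J_quadratic}) with the $C^{2,4}$ regularity to get $\norm{\nx^2 W}_{L^\infty}\lesssim\norm{u-u^*}_{L^2}^{2/(n+3)}$ by mollifier interpolation, and (ii) a parabolic Calder\'on--Zygmund weak-$(1,1)$ estimate for $S\mapsto\nx^2 W$, and deduce $\norm{\nx^2 W}_{L^2}^2\lesssim\norm{\nx^2 W}_{L^{1,\infty}}\norm{\nx^2 W}_{L^\infty}$. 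The $\nx^2 W$ term is indeed the bottleneck, so the exponent $1+\tfrac{1}{n+3}$ follows. Two observations. First, your argument is shorter and conceptually cleaner, and it sidesteps both the finite-difference machinery and the use of Lemma~\ref{lem:regularity_Vu}, at the cost of importing the weak-$(1,1)$ parabolic singular-integral estimate. That estimate is the one step in your proposal that is not turnkey: it is believable for smooth, uniformly elliptic coefficients, and the required kernel bounds on $\nx^2$ of the fundamental solution follow from the same Aronson-type estimates \eqref{eq:lemma4.2} the paper already invokes, but writing out the weak-type endpoint via a parametrix/CZ decomposition on the parabolic homogeneous space is itself a real piece of analysis, arguably comparable in length to the paper's Steps~3.2--3.8; so the complexity has been relocated rather than removed. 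Second, and this is worth flagging explicitly, the weak-$(1,1)$ constant must be uniform over $u\in\mU$ (the coefficients $D(x,u(t,x))$, $b(x,u(t,x))$ vary with $u$); uniform ellipticity and the uniform $C^{2,4}$ bound on $\mU$ do give this, but the claim should be stated and checked. Beyond that I see no gap: $W\ge0$ follows from $S\ge0$ and the Feynman--Kac representation, the bump-function interpolation for $\norm{\nx^2 W}_{L^\infty}$ is elementary given the $L^1$ decay of $W(t,\cdot)$ and the uniform $\nx^3 W$ bound, and the lower-order terms $\norm{W}_{L^2}$, $\norm{\nx W}_{L^2}$ are dominated exactly as you say.
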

\begin{rmk}
We believe that \eqref{eq:quadratic_Vu} holds with $\alpha=1$, but encountered some technical difficulty to prove it. We give the intuition here. Following the notation in the previous lemma, we denote $u^{\ve} = u^* + \ve \phi$. Since $V^{\ve}(t,x)$ reaches its minimum at $\ve=0$ for any $(t,x)$, we have
$$\pve V^{\ve}(t,x) \big|_{\ve=0} =0.$$
With sufficient regularity, we have
$$\parentheses{\pve \nx V^{\ve}}|_{\ve=0} = \parentheses{\nx \pve V^{\ve}}|_{\ve=0} = \nx \parentheses{\pve V^{\ve}}|_{\ve=0} = 0,$$
$$\parentheses{\pve \nx^2 V^{\ve}}|_{\ve=0} = \parentheses{\nx^2 \pve V^{\ve}}|_{\ve=0} = \nx^2 \parentheses{\pve V^{\ve}}|_{\ve=0} = 0.$$
Making a local Taylor expansion w.r.t. $\ve$, we know that $\nx V^{\ve} - \nx V^*$ and $\nx^2 V^{\ve} - \nx^2 V^*$ are of order $\mO(\ve^2)$, which implies \eqref{eq:quadratic_Vu} holds with $\alpha=1$.
\end{rmk}
\begin{proof}
We will inherit some notations from the previous lemma. Denote $\ve_0 = \norm{u-u^*}_{L^2}$ and let $u = u^*+\ve_0\phi$, then $\norm{\phi}_{L^2}=1$. We denote $u^{\ve} = u^* + \ve \phi$. Denote the corresponding value function $V_{u^{\ve}}$ by $V^{\ve}$. Denote the corresponding density function by $\rho^{\ve}$, with initial condition $\rho^{\ve}(0,\cdot) \equiv 1$. The key difficulty for the proof is that $\phi(t,x)$ may not lie in $\mU$ like $u^*(t,x)$ or $u^{\ve}(t,x)$, which has $K$ as a bound for itself and its derivatives. $\phi = (u^{\ve} - u^*)/ \ve$ do have some regularity, but the constant for the bounds have a factor of $2/\ve$. We will prove the lemma in three steps, which is similar to Lemma \ref{lem:regularity_Vu}.

\emph{Step 1.} We want to show
\begin{equation}\label{eq:lem4step1}
\norm{V_u - V^*}_{L^2} \le C \norm{u - u^*}_{L^2}^{1+\alpha}.
\end{equation}
Note that $V_u \ge V^*$, so
\begin{equation*}
\begin{aligned}
& \quad \int_{\mX} \abs{V_u(0,x)-V^*(0,x)} \rd x = \int_{\mX} \parentheses{V_u(0,x)-V^*(0,x)} \rd x \\
& = J[u] - J[u^*] \le C_3 \norm{u-u^*}_{L^2}^2,
\end{aligned}
\end{equation*}
where we used Lemma \ref{lem:J_quadratic} in the last inequality. i.e., $\norm{V^{\ve}(0,\cdot) - V^*(0,\cdot)}_{L^1} \le C_3\norm{u-u^*}_{L^2}^2$. A similar argument with $t \in (0,T)$ as the starting time gives us $\norm{V^{\ve}(t,\cdot) - V^*(t,\cdot)}_{L^1} \le C\norm{u-u^*}_{L^2}^2$. Therefore, we have
$$\norm{V^{\ve} - V^*}_{L^1} \le C\norm{u-u^*}_{L^2}^2,$$
which implies \eqref{eq:lem4step1} because $V^{\ve}$, $V^*$, $u$, $u^*$ are bounded.

\emph{Step 2.} We want to show
\begin{equation}\label{eq:lem4step2}
\norm{\nx V_u - \nx V^*}_{L^2} \le C \norm{u - u^*}_{L^2}^{1+\alpha}.
\end{equation}
It is sufficient to show the partial derivative in each dimension at $t=0$ satisfies the estimate in $L^1$ norm:
\begin{equation}\label{eq:lem4step2temp}
\norm{\partial_i V_u(0,\cdot) - \partial_i V^*(0,\cdot)}_{L^1} \le C \norm{u - u^*}_{L^2}^{1+\alpha},
\end{equation}
because we can repeat the argument in other dimensions and for other $t \in (0,T)$, and the derivatives of the value functions are bounded.

\emph{Step 2.1.} We reformulate the problem using finite difference in this step. Let $x_1 \in \mX$ be a variable and denote $x_2 = x_1 + \delta e_i$ a perturbation. We assume $\delta >0$ without loss of generality. We have
\begin{equation}\label{eq:lem4_s2d1}
\begin{aligned}
& \quad \norm{\partial_i V_u(0,\cdot) - \partial_i V^*(0,\cdot)}_{L^1} = \int_{\mX} \abs{\partial_i V_u(0,x_1) - \partial_i V^*(0,x_1)} \rd x_1 \\
& = \int_{\mX} \abs{ \int_0^{\ve_0} \pve \partial_i V^{\ve}(0,x_1) \rd\ve } \rd x_1 = \int_{\mX} \abs{ \int_0^{\ve_0} \partial_i \pve  V^{\ve}(0,x_1) \rd\ve} \rd x_1 \\
&= \int_{\mX} \abs{ \int_0^{\ve_0} \lim_{\delta \to 0} \dfrac{1}{\delta} \parentheses{\pve  V^{\ve}(0,x_2) - \pve  V^{\ve}(0,x_1)} \rd\ve } \rd x_1 \\
& \le \liminf_{\delta \to 0} \dfrac{1}{\delta}  \int_0^{\ve_0} \int_{\mX} \abs{ \pve  V^{\ve}(0,x_2) - \pve  V^{\ve}(0,x_1)}\rd x_1 \, \rd\ve,
\end{aligned}
\end{equation}
where the last inequality is because of Fatou's lemma. Now, we denote $\xyet$ and $\xeet$ the state processes under control $u^{\ve}$ that start at $x^{1,\ve}_0=x_1$ and $x^{2,\ve}_0=x_2$. Here $\xyet$ and $\xeet$ share the same realization of Brownian motion. By Proposition \ref{prop:rho},
\begin{align*}
-\pve V^{\ve}(0,x_1) &= \EE \sqbra{\int_0^T \inner{\nbu G(t, \xyet, u^{\ve}(t,\xyet), -\nx V^{\ve}, -\nx^2 V^{\ve})}{\phi(t,x^{1,\ve}_t)} \rd t }\\
&=:  \EE \sqbra{\int_0^T \inner{\nbu G^{1,\ve}_t}{\phi^{1,\ve}_t} \rd t }.
\end{align*}
Similarly, $-\pve V^{\ve}(0,x_2) = \EE \sqbra{\int_0^T \inner{\nbu G^{2,\ve}_t}{\phi^{2,\ve}_t} \rd t }$.
So
\begin{equation}\label{eq:lem4_s2d1a}
\begin{aligned}
& \quad \int_{\mX} \abs{  \pve  V^{\ve}(0,x_2) - \pve  V^{\ve}(0,x_1)}\rd x_1 \\
& = \int_{\mX} \abs{ \EE \sqbra{\int_0^T \inner{\nbu G^{1,\ve}_t}{\phi^{1,\ve}_t} - \inner{\nbu G^{2,\ve}_t}{\phi^{2,\ve}_t} \rd t } }\rd x_1 
\end{aligned}
\end{equation}
By \eqref{eq:lem4_s2d1} and \eqref{eq:lem4_s2d1a}, in order to show \eqref{eq:lem4step2temp}, it is sufficient to show that
\begin{equation}\label{eq:lem4step2temp2}
\int_{\mX} \abs{ \EE \sqbra{\int_0^T \inner{\nbu G^{1,\ve}_t}{\phi^{1,\ve}_t} - \inner{\nbu G^{2,\ve}_t}{\phi^{2,\ve}_t} \rd t}}\rd x_1 \le C \delta \ve^{\alpha}
\end{equation}
for some uniform constant $C$. We can assume $\delta \le \ve$ because $\nbu G^{j,\ve}_t|_{\ve=0}=0$ ($j=1,2$), hence \eqref{eq:lem4step2temp2} is obvious when $\ve=0$.

\emph{Step 2.2.} We split into two sub-tasks to show \eqref{eq:lem4step2temp2} in this step. Let us denote $\rho^{1,\ve}(t,x)$ and $\rho^{2,\ve}(t,x)$ the density functions of $\xyet$ and $\xeet$, then $\rho^{j,\ve}(0,\cdot)=\delta_{x_j} \, (j=1,2)$ and a sufficient condition for \eqref{eq:lem4step2temp2} is
\begin{equation}\label{eq:lem4step2temp3}
\begin{aligned}
& \int_{\mX} \int_0^T \int_{\mX} \Big| \inner{\nbu G(t,x,u^{\ve}, -\nx V^{\ve}, -\nx^2 V^{\ve})}{\phi(t,x)} \\
& \hspace{0.8in} \parentheses{\rho^{1,\ve}(t,x) - \rho^{2,\ve}(t,x)} \Big| \, \rd x\, \rd t\,\rd x_1 \le C \delta \ve^{\alpha}.
\end{aligned}
\end{equation}
The idea to prove \emph{Step 2} is to decompose the time interval $[0,T]$ into two sub-intervals $[0,\ve^{2\alpha}]$ and $(\ve^{2\alpha},T]$ and prove \eqref{eq:lem4step2temp2} and \eqref{eq:lem4step2temp3} with $\int_0^T$ replaced by the corresponding intervals respectively. For the first part, we take advantage that $\ve^{2\alpha}$ is small, while for the second part, we we use the fact that $\rho^{1,\ve}(t,x)$ and $\rho^{2,\ve}(t,x)$ are nicely mixed.

\emph{Step 2.3.} We estimate the integration in the interval $[0,\ve^{2\alpha}]$ in this step. We want to show
\begin{equation}\label{eq:lem4step2temp4}
\int_{\mX} \EE \sqbra{\int_0^{\ve^{2\alpha}} \abs{  \inner{\nbu G^{1,\ve}_t}{\phi^{1,\ve}_t} - \inner{\nbu G^{2,\ve}_t}{\phi^{2,\ve}_t}  } \rd t}\rd x_1 \le C \delta \ve^{\alpha}.
\end{equation}
Using the Lipschitz property and boundedness of $\nbu r$, $\nbu b$, $\nbu D$, $\nx V^{\ve}$, and $\nx^2 V^{\ve}$, we can show
$$\abs{\nbu G^{1,\ve}_t - \nbu G^{2,\ve}_t} \le C \abs{\xyet-\xeet}.$$
Also, we have $\abs{\pyet-\peet} \le \dfrac{2L}{\ve} \abs{\xyet-\xeet}$. Therefore,
\begin{equation*}
\begin{aligned}
& \quad \abs{ \inner{\nbu G^{1,\ve}_t}{\phi^{1,\ve}_t} - \inner{\nbu G^{2,\ve}_t}{\phi^{2,\ve}_t} }\\
& = \abs{ \inner{\nbu G^{1,\ve}_t - \nbu G^{2,\ve}_t}{\phi^{1,\ve}_t} + \inner{\nbu G^{2,\ve}_t}{\phi^{1,\ve}_t-\phi^{2,\ve}_t} }\\
& \le \abs{\nbu G^{1,\ve}_t - \nbu G^{2,\ve}_t} \, \abs{\phi^{1,\ve}_t} + \abs{\nbu G^{2,\ve}_t} \, \abs{\phi^{1,\ve}_t-\phi^{2,\ve}_t}\\
& \le C \left(\abs{\pyet} + \abs{\peet} + \dfrac{1}{\ve} \abs{\nx V^{\ve}(t,\xeet) - \nx V^*(t,\xeet)} + \right.\\
& \hspace{0.5in} \left. \dfrac{1}{\ve} \abs{\nx^2 V^{\ve}(t,\xeet) - \nx^2 V^*(t,\xeet)} \right) \abs{\xyet-\xeet}
\end{aligned}
\end{equation*}
where we have used \eqref{eq:bound_nuGve} to estimate $\abs{\nbu G^{2,\ve}_t}$ in the last inequality. Substituting the estimate above into \eqref{eq:lem4step2temp4} left, we obtain
\begin{equation}\label{eq:lem4_step23}
\begin{aligned}
& \quad \int_{\mX} \EE \sqbra{\int_0^{\ve^{2\alpha}} \abs{  \inner{\nbu G^{1,\ve}_t}{\phi^{1,\ve}_t} - \inner{\nbu G^{2,\ve}_t}{\phi^{2,\ve}_t}  } \rd t}\rd x_1 \\
& \le C \int_{\mX} \EE \left[ \int_0^{\ve^{2\alpha}} \left(  \delta \ve^{\alpha} \abs{\pyet}^2 + \delta \ve^{\alpha} \abs{\peet}^2 + \delta\ve^{\alpha-2} \abs{\nx V^{\ve}(t,\xeet) - \nx V^*(t,\xeet)}^2 \right. \right. \\
& \hspace{0.4in} \left. \left. \delta\ve^{\alpha-2} \abs{\nx^2 V^{\ve}(t,\xeet) - \nx^2 V^*(t,\xeet)}^2 + \dfrac{1}{\delta \ve^{\alpha}} \abs{\xyet-\xeet}^2 \right) \rd t \right] \rd x_1 \\
& \le C \rho_1 \parentheses{ 2\delta \ve^{\alpha} \norm{\phi}_{L^2}^2 + \delta\ve^{\alpha-2} \normHtwo{V^{\ve}-V^*}^2 } + C\int_{\mX} \dfrac{1}{\delta \ve^{\alpha}} \int_0^{\ve^{2\alpha}} \EE \abs{\xyet-\xeet}^2 \rd t \, \rd x_1 \\
& \le C \parentheses{ \delta \ve^{\alpha} + \delta\ve^{\alpha-2} \ve^2  } + C \int_{\mX} \dfrac{1}{\delta \ve^{\alpha}} \int_0^{\ve^{2\alpha}} C_1 \abs{x_1-x_2}^2 \rd t \, \rd x_1 \\
& \le C \delta \ve^{\alpha} + \dfrac{C}{\delta \ve^{\alpha}} \ve^{2\alpha} \delta^2 \le C \delta \ve^{\alpha}.
\end{aligned}
\end{equation}
Here, the first inequality is just Cauchy's inequality. For the third inequality, we used Lemma \ref{lem:regularity_Vu} and the Gronwall inequality \eqref{eq:Gronwall2}. In the fourth inequality, we used $\abs{x_1-x_2}=\delta$. We give an explanation of the second inequality in \eqref{eq:lem4_step23} next. After confirming this second inequality, we get \eqref{eq:lem4step2temp4}.

We will use a similar argument many times later in this proof. Although $x^{1,\ve}_0=x_1$ and $x^{2,\ve}_0=x_2$ are fixed points, we are integrating $x_1$ over $\mX$ (with $x_2-x_1 = \delta e_i$ fixed). So, we can define two new processes $\overline{x}^{1,\ve}_t$ and $\overline{x}^{2,\ve}_t$ that have the same dynamic as $x^{1,\ve}_t$ and $x^{2,\ve}_t$, but start at uniform distribution in $\mX$, with $\overline{x}^{2,\ve}_t - \overline{x}^{1,\ve}_t \equiv \delta e_i$. The densities for $\overline{x}^{2,\ve}_t$ and $\overline{x}^{1,\ve}_t$ (denoted by $\overline{\rho}^{1,\ve}(t,x)$ and $\overline{\rho}^{2,\ve}(t,x)$) satisfies the estimate in Proposition \ref{prop:rho}. Therefore,
\begin{equation}\label{eq:view_uniform}
\begin{aligned}
& \quad \int_{\mX} \EE \sqbra{\int_0^{\ve^{2\alpha}} \abs{\phi^{1,\ve}_t}^2 \rd t} \rd x_1 \equiv \int_{\mX} \EE \sqbra{\int_0^{\ve^{2\alpha}} \abs{\phi(t,\xyet)}^2 \rd t ~\Big|~ x^{1,\ve}_0 = x_1} \rd x_1\\
&= \EE_{\overline{x}^{1,\ve}_0 \sim \text{Unif}(\mX)} \EE \sqbra{\int_0^{\ve^{2\alpha}} \abs{\phi(t, \overline{x}^{1,\ve}_t)}^2 \rd t ~\Big|~ \overline{x}^{1,\ve}_0} = \EE \int_0^{\ve^{2\alpha}} \abs{\phi(t, \overline{x}^{1,\ve}_t)}^2 \rd t \\
& = \int_{\mX} \int_0^{\ve^{2\alpha}} \abs{\phi(t, x)}^2 \overline{\rho}^{1,\ve}(t,x) \, \rd t \, \rd x \le \rho_1 \int_{\mX} \int_0^{\ve^{2\alpha}} \abs{\phi(t, x)}^2 \rd t \, \rd x \le \rho_1 \norm{\phi}_{L^2}^2.
\end{aligned}
\end{equation}
$\phi^{2,\ve}_t$ satisfies the same inequality. The analysis for the $\nx V$ and $\nx^2 V$ terms are exactly the same. Therefore, we can apply Proposition \ref{prop:rho}, and the second inequality in \eqref{eq:lem4_step23} holds. Hence, we confirm that \eqref{eq:lem4step2temp4} holds.

\emph{Step 2.4.} We estimate the integration in the interval $[\ve^{2\alpha},T]$ in this step. We want to show 
\begin{equation}\label{eq:lem4step2temp5}
\int_{\ve^{2\alpha}}^T \int_{\mX} \abs{  \inner{\nbu G(t,x,u^{\ve}, -\nx V^{\ve}, -\nx^2 V^{\ve})}{\phi(t,x)} \parentheses{\rho^{1,\ve}(t,x) - \rho^{2,\ve}(t,x)} }\rd x\, \rd t \le C \delta \ve^{\alpha}.
\end{equation}
We recall that $\rho$ is the solution of the Fokker--Planck equation $\partial_t \rho = \mG_{\ve}^{\dagger} \rho$, where $\mG_{\ve}$ is the infinitesimal generator of the state process with control $u^{\ve}$ and $\mG_{\ve}^{\dagger}$ is its adjoint. Let us use $p^{\ve}(t,x;s,y)$ ($t \ge s$) to denote the fundamental solution of this PDE. Then, $\rho^{j,\ve}(t,x) = p^{\ve}(t,x;0,x_j)$ for $j=1,2$. The fundamental solution of linear parabolic PDE is well-studied, and a comprehensive description can be found in \cite{friedman2008partial}. A key observation of the fundamental solution $p^{\ve}$ is that $q^{\ve}(t,x;s,y) := p^{\ve}(s,y;t,x)$ is the fundamental solution of the backward Kolmogorov equation $\partial_t \psi + \mG_{\ve} \psi = 0$ \cite{ito1953fundamental}. Therefore, the regularity of $p^{\ve}(t,x;s,y)$ in $y$ (here $t \ge s$) is equivalent to the regularity of $q^{\ve}(t,x;s,y)$ in $x$ (here $s \ge t$).  Aronson proved (in \cite{aronson1959fundamental} Lemma 4.2) that
\begin{equation}\label{eq:lemma4.2}
\abs{\nx^k q^{\ve}(t,x;s,y)} \le C^{(k)} (s-t)^{-(n+k)/2}.
\end{equation}
Applying a standard mean value theorem and this lemma \eqref{eq:lemma4.2} with $k=1$ to $q^{\ve}$, we obtain
\begin{equation}\label{eq:diff_rho}
\begin{aligned}
& \quad \abs{\rho^{1,\ve}(t,x) - \rho^{2,\ve}(t,x)} = \abs{ q^{\ve}(0,x_1;t,x) - q^{\ve}(0,x_2;t,x) } \\
& = \abs{\inner{\nx q^{\ve}(0,(1-c)x_1 + cx_2;t,x)}{x_1 - x_2} } \le C t^{-(1+n)/2} \abs{x_1-x_2} = C t^{-(1+n)/2} \delta,
\end{aligned}
\end{equation}
where we clarify that $\nx$ is operated on the second (not fourth) argument on $q^{\ve}(t,x;s,y)$. Therefore,
\begin{equation*}
\begin{aligned}
& \quad \int_{\ve^{2\alpha}}^T \int_{\mX} \abs{  \inner{\nbu G(t,x,u^{\ve}, -\nx V^{\ve}, -\nx^2 V^{\ve})}{\phi(t,x)} \parentheses{\rho^{1,\ve}(t,x) - \rho^{2,\ve}(t,x)} }\rd x\, \rd t \\
& \le C \int_{\ve^{2\alpha}}^T \int_{\mX} \parentheses{\ve\abs{\phi(t,x)} + \abs{\nx V^{\ve} - \nx V^*} + \abs{\nx^2 V^{\ve} - \nx^2 V^*}} \abs{\phi(t,x)} t^{-(1+n)/2} \delta \, \rd x\, \rd t \\
& \le C \delta \ve^{-\alpha-n\alpha} \int_{\ve^{2\alpha}}^T \int_{\mX} \parentheses{ 
\ve \abs{\phi(t,x)}^2 + \dfrac{1}{\ve} \abs{\nx V^{\ve} - \nx V^*}^2 + \dfrac{1}{\ve} \abs{\nx^2 V^{\ve} - \nx^2 V^*}^2 }\rd x\, \rd t \\
& \le C \delta \ve^{-\alpha-n\alpha} \parentheses{\ve \norm{\phi}_{L^2}^2 + \dfrac{1}{\ve} \normHtwo{V^\ve-V^*}^2} \le C \delta \ve^{1-\alpha-n\alpha} \le C \delta \ve^{\alpha}.
\end{aligned}
\end{equation*}
We used \eqref{eq:bound_nuGve} and \eqref{eq:diff_rho} in the first inequality, and used Lemma \ref{lem:regularity_Vu} in the thourth inequality. So, \eqref{eq:lem4step2temp5} holds.

To conclude, we combine \eqref{eq:lem4step2temp4} and \eqref{eq:lem4step2temp5} and recover \eqref{eq:lem4step2temp2}. Therefore, \eqref{eq:lem4step2temp}, hence \eqref{eq:lem4step2} holds.

\emph{Step 3.}  We want to show
\begin{equation}\label{eq:lem4step3}
\norm{\nx^2 V_u - \nx^2 V^*}_{L^2} \le C \norm{u - u^*}_{L^2}^{1+\alpha}.
\end{equation}
The idea to show \eqref{eq:lem4step3} is similar as in \emph{Step 2}. It is sufficient to show
\begin{equation}\label{eq:lem4step3temp0}
\norm{\nx^2 V_u(0,\cdot) - \nx^2 V^*(0,\cdot)}_{L^1} \le C \norm{u - u^*}_{L^2}^{1+\alpha}
\end{equation}
because the same argument applies to other $t\in(0,T)$. We will use the idea of finite difference and cut $[0,T]$ into two intervals with separate estimate.

\emph{Step 3.1.} We reformulate the problem using finite difference in this step. Let $x_1 \in \mX$ be a variable.
\begin{equation*}
\begin{aligned}
& \quad \norm{\nx^2 V_u(0,\cdot) - \nx^2 V^*(0,\cdot)}_{L^1} = \int_{\mX} \abs{\nx^2 V_u(0,x_1) - \nx^2 V^*(0,x_1)} \rd x_1 \\
& = \int_{\mX} \abs{ \int_0^{\ve_0} \pve \nx^2 V^{\ve}(0,x_1) \rd\ve } \rd x_1 = \int_{\mX} \abs{ \int_0^{\ve_0} \nx^2 \pve  V^{\ve}(0,x_1) \rd\ve} \rd x_1 \\
& \le \int_0^{\ve_0} \int_{\mX} \abs{ \nx^2 \pve  V^{\ve}(0,x_1)} \rd x_1 \, \rd\ve.
\end{aligned}
\end{equation*}
So, it is sufficient to show
\begin{equation}\label{eq:lem4step3temp1}
\int_{\mX} \abs{ \nx^2 \pve  V^{\ve}(0,x_1)} \rd x_1 \le C \ve^{\alpha}
\end{equation}
for all $\ve \in (0,\ve_0)$ in order to recover \eqref{eq:lem4step3temp0}.
To compute the Hessian, let $z$ be a perturbation vector with $\abs{z}=1$. Denote $x_0 = x_1 - \delta z$ and $x_2 = x_1 + \delta z$. Without loss of generality, we just consider $\delta>0$. Then,
$$\lim_{\delta\to 0} \dfrac{1}{\delta^2} \parentheses{\pve V^{\ve}(0,x_0) + \pve V^{\ve}(0,x_2) - 2\pve V^{\ve}(0,x_1)} = z\tp \nx^2 \pve V^{\ve}(0,x_1)z.$$
We recall that $\norm{\cdot}_2$ denotes the matrix spectrum norm. Since
$$\abs{ \nx^2 \pve  V^{\ve}(0,x_1)} \le \sqrt{n} \norm{ \nx^2 \pve  V^{\ve}(0,x_1)}_2,$$
we only need to show that there exists $C$ that does not depend on $z$, such that
\begin{equation}\label{eq:lem4temp0}
\int_{\mX} \abs{ z\tp \nx^2 \pve  V^{\ve}(0,x_1) z} \rd x_1 \le C \ve^{\alpha}
\end{equation}
for all $\abs{z}=1$, in order to get \eqref{eq:lem4step3temp1}. The left hand side of \eqref{eq:lem4temp0} satisfies
\begin{equation}\label{eq:lem4temp1}
\begin{aligned}
& \quad \int_{\mX} \abs{ z\tp \nx^2 \pve  V^{\ve}(0,x_1)z} \, \rd x_1 \\
& = \int_{\mX} \lim_{\delta\to 0} \dfrac{1}{\delta^2} \abs{\pve V^{\ve}(0,x_0) + \pve V^{\ve}(0,x_2) - 2\pve V^{\ve}(0,x_1)} \, \rd x_1 \\
& \le \liminf_{\delta\to 0} \dfrac{1}{\delta^2} \int_{\mX}  \abs{\pve V^{\ve}(0,x_0) + \pve V^{\ve}(0,x_2) - 2\pve V^{\ve}(0,x_1)} \, \rd x_1,
\end{aligned}
\end{equation}
where we used Fatou's lemma in the last inequality. And
\begin{equation}\label{eq:lem4temp2}
\begin{aligned}
& \quad \int_{\mX}  \abs{\pve V^{\ve}(0,x_0) + \pve V^{\ve}(0,x_2) - 2\pve V^{\ve}(0,x_1)} \, \rd x_1\\
& = \int_{\mX} \abs{ \EE \sqbra{\int_0^T   \inner{\nbu G^{0,\ve}_t}{\phi^{0,\ve}_t} + \inner{\nbu G^{2,\ve}_t}{\phi^{2,\ve}_t} -2 \inner{\nbu G^{1,\ve}_t}{\phi^{1,\ve}_t}  \rd t}} \, \rd x_1 \\
& \le \int_{\mX} \EE \sqbra{\int_0^T \abs{  \inner{\nbu G^{0,\ve}_t}{\phi^{0,\ve}_t} + \inner{\nbu G^{2,\ve}_t}{\phi^{2,\ve}_t} -2 \inner{\nbu G^{1,\ve}_t}{\phi^{1,\ve}_t} } \rd t} \, \rd x_1 \\
& \le \int_{\mX} \int_0^T \int_{\mX} \abs{  \inner{\nbu G(t,x,u^{\ve},-\nx V^{\ve}, -\nx^2 V^{\ve})}{\phi(t,x)}} \abs{\rho^{0,\ve} + \rho^{2,\ve} - 2\rho^{1,\ve}} \rd x \, \rd t \, \rd x_1,
\end{aligned}
\end{equation}
where we used \eqref{eq:dVdu} in Proposition \ref{prop:cost_derivative} for the first equality. Combining \eqref{eq:lem4temp1} and \eqref{eq:lem4temp2}, it is sufficient to show
\begin{equation}\label{eq:lem4step3temp2}
\int_{\mX} \EE \sqbra{\int_0^{\ve^{2\alpha}} \abs{  \inner{\nbu G^{0,\ve}_t}{\phi^{0,\ve}_t} + \inner{\nbu G^{2,\ve}_t}{\phi^{2,\ve}_t} -2 \inner{\nbu G^{1,\ve}_t}{\phi^{1,\ve}_t} } \rd t} \, \rd x_1 \le C \delta^2 \ve^{\alpha}
\end{equation}
and
\begin{equation}\label{eq:lem4step3temp3}
\int_{\ve^{2\alpha}}^T \int_{\mX} \abs{  \inner{\nbu G(t,x,u^{\ve},-\nx V^{\ve}, -\nx^2 V^{\ve})}{\phi(t,x)}} \abs{\rho^{0,\ve} + \rho^{2,\ve} - 2\rho^{1,\ve}} \rd x \, \rd t \le C \delta^2 \ve^{\alpha}
\end{equation}
in order to recover \eqref{eq:lem4temp0} and hence \eqref{eq:lem4step3temp1}. These two inequalities are tasks for later steps. Again, we only need to verify them when $\delta \le \ve$. 

\emph{Step 3.2.} We derive some generalizations of mean value theorem and Gronwall inequalities in this step.
Let $f(s): \RR \to \RR$ be a smooth function. Then
\begin{equation*}
f(1)+f(-1)-2f(0) = \int_0^1 (f'(s)-f'(-s)) \,\rd s = \int_0^1 \int_{-s}^s f''(\tau) \,\rd \tau \,\rd s.
\end{equation*}
Using the mean value theorem, we know that there exists $c \in [-1,1]$ s.t. $f(1)+f(-1)-2f(0) = f''(c)$. More generally, let $x_j \in \mX ~ (j=0,1,2)$ with $2x_1=x_0+x_2$ and let $f:s \mapsto g(x_1 + s(x_2-x_1))$ for some smooth function $g$. Then
\begin{equation}\label{eq:mean_value}
\begin{aligned}
& \quad g(x_2)+g(x_0)-2g(x_1) = f(1)+f(-1)-2f(0) = f''(c) \\
& = (x_2-x_1)\tp \nx^2 g((1-c)x_1 + cx_2) (x_2-x_1)
\end{aligned}
\end{equation}
for some $c\in [-1,1]$. We will use this result in later steps. We give some Gronwall inequalities next. 

Let $x^j_t ~ (j=0,1,2)$ be the state processes that start at  $x^j_0=x_j$ with $x_0 = x_1-\delta z$ and $x_2 = x_1 + \delta z$ and $\abs{z}=1$. Here $x^j_t$ share the same control function $u$, which could be $u^*$ or $u^{\ve}$. As two corollaries of \eqref{eq:Gronwall2}, We want to show
\begin{equation}\label{eq:Gronwall4}
\sup_{t \in [0,T]} \EE  \abs{x^1_t - x^2_t}^4 \le C \EE\abs{x^1_0-x^2_0}^4 = C\delta^4
\end{equation}
and
\begin{equation}\label{eq:Gronwall5}
\sup_{t \in [0,T]} \EE  \abs{x^0_t + x^2_t - 2x^1_t}^2 \le C\delta^4.
\end{equation}
By It\^o's formula,
\begin{align*}
& \rd \abs{x^1_t-x^2_t}^4 = \left[ 4 (x^1_t-x^2_t)\tp \parentheses{\sigma^1_t-\sigma^2_t}  \parentheses{\sigma^1_t-\sigma^2_t}\tp (x^1_t-x^2_t) + 2\abs{x^1_t-x^2_t}^2 \right. \\
& \left. \parentheses{\abs{\sigma^1_t-\sigma^2_t}^2 + 2\inner{x^1_t-x^2_t}{b^1_t-b^2_t}} \right] \rd t  + 4 \abs{x^1_t-x^2_t}^2 (x^1_t-x^2_t)\tp (\sigma^1_t-\sigma^2_t) \rd W_t,
\end{align*}
where we have inherit the notation $b^j_t = b(x^j_t,u(t,x^j_t))$ and $\sigma^j_t = \sigma(x^j_t,u(t,x^j_t))$ in the proof of Lemma \ref{lem:gronwall}. Integrating, taking expectation, and using \eqref{eq:diffb_bound} and \eqref{eq:diffsigma_bound}, we obtain
\begin{equation*}
\EE \abs{x^1_T - x^2_T}^4 \le \EE \abs{x^1_0 - x^2_0}^4 + C \EE \int_0^T \EE \abs{x^1_t - x^2_t}^4 = \delta^4 + C \EE \int_0^T \EE \abs{x^1_t - x^2_t}^4,
\end{equation*}
where we used the Lipschitz condition for $b$ and $\sigma$ in Assumption \ref{assump:basic}. This inequality also holds for $T' < T$. Therefore, applying a Gronwall's inequality gives us
$\EE \abs{x^1_T - x^2_T}^4 \le C \delta^4$. Therefore, \eqref{eq:Gronwall4} holds.

We show \eqref{eq:Gronwall5} next. By It\^o's formula,
\begin{equation}\label{eq:d_diff2x}
\begin{aligned}
\rd \abs{x^0_t + x^2_t - 2x^1_t}^2 = & \sqbra{\abs{\sigma^0_t + \sigma^2_t - 2\sigma^1_t}^2 + 2\inner{x^0_t + x^2_t - 2x^1_t}{b^0_t - b^2_t - 2b^1_t}} \rd t\\
& + 2(x^0_t + x^2_t - 2x^1_t)\tp (\sigma^0_t + \sigma^2_t - 2\sigma^1_t) \rd W_t.
\end{aligned}
\end{equation}
We pick the $i-$th entry of the vector valued function $b$ and denote the map $(t,x) \mapsto b_i(x,u(t,x))$ by $B_i(t,x)$ for $i=1,2,\ldots,n$. By Assumption \ref{assump:basic}, $B_i(t,x)$ is Lipschitz in $x$ and has bounded Hessian in $x$.
Apply the mean value theorem \eqref{eq:mean_value}, we get 
\begin{equation*}
\begin{aligned}
& \quad \abs{B_i(t, x^0_t) + B_i(t, x^2_t) - 2B_i(t,x^1_t)} \\
& \le \abs{B_i(t,2x^1_t-x^2_t) + B_i(t, x^2_t) - 2B_i(t,x^1_t)} + \abs{B_i(t, 2x^1_t-x^2_t) - B_i(t,x^0_t)} \\
& \le (x^2_t - x^1_t)\tp \nx^2 B_i(t, x^1_t + c(x^2_t-x^1_t)) \, (x^2_t - x^1_t) + C \abs{x^0_t + x^2_t - 2x^1_t} \\
& \le C \parentheses{\abs{x^2_t - x^1_t}^2 + \abs{x^0_t + x^2_t - 2x^1_t}}.
\end{aligned}
\end{equation*}
Therefore,
\begin{equation}\label{eq:diff2b_bound}
\abs{b^0_t - b^2_t - 2b^1_t} \le C \parentheses{\abs{x^2_t - x^1_t}^2 + \abs{x^0_t + x^2_t - 2x^1_t}}.
\end{equation}
Similarly, we have
\begin{equation}\label{eq:diff2sigma_bound}
\abs{\sigma^0_t - \sigma^2_t - 2\sigma^1_t} \le C \parentheses{\abs{x^2_t - x^1_t}^2 + \abs{x^0_t + x^2_t - 2x^1_t}}.
\end{equation}
Integrating \eqref{eq:d_diff2x}, taking expectation,  we obtain
\begin{equation}\label{eq:Gronwall5_temp}
\begin{aligned}
& \quad \EE\abs{x^0_T + x^2_T - 2x^1_T}^2 = \EE \abs{x^0_0 + x^2_0 - 2x^1_0}^2 \\
& \quad + \EE \int_0^T \parentheses{\abs{\sigma^0_t + \sigma^2_t - 2\sigma^1_t}^2 + 2\inner{x^0_t + x^2_t - 2x^1_t}{b^0_t - b^2_t - 2b^1_t}} \rd t \\
& \le 0 + \EE \int_0^T \parentheses{\abs{\sigma^0_t + \sigma^2_t - 2\sigma^1_t}^2 + \abs{x^0_t + x^2_t - 2x^1_t}^2 + \abs{b^0_t - b^2_t - 2b^1_t}^2} \rd t \\
& \le C \EE \int_0^T \parentheses{\abs{x^2_t - x^1_t}^4 + \abs{x^0_t + x^2_t - 2x^1_t}^2} \rd t \le C\delta^4 + C \EE \int_0^T  \abs{x^0_t + x^2_t - 2x^1_t}^2 \rd t,
\end{aligned}
\end{equation}
where we used \eqref{eq:diff2b_bound} and \eqref{eq:diff2sigma_bound} in the second inequality, and used \eqref{eq:Gronwall4} in the third. Applying Gronwall's inequality on \eqref{eq:Gronwall5_temp}, we get
$$\EE\abs{x^0_T + x^2_T - 2x^1_T}^2 \le C \delta^4$$
and hence recover \eqref{eq:Gronwall5}.

\emph{Step 3.3.} We reformulate \eqref{eq:lem4step3temp2} and estimate the first two terms in this step. Let us rewrite \eqref{eq:lem4step3temp2} first.
\begin{equation}\label{eq:step3_4terms}
\begin{aligned}
& \quad \int_{\mX} \EE \sqbra{\int_0^{\ve^{2\alpha}} \abs{  \inner{\nbu G^{0,\ve}_t}{\phi^{0,\ve}_t} + \inner{\nbu G^{2,\ve}_t}{\phi^{2,\ve}_t} -2 \inner{\nbu G^{1,\ve}_t}{\phi^{1,\ve}_t} } \rd t} \, \rd x_1 \\
& = \int_{\mX} \EE \left[\int_0^{\ve^{2\alpha}} \left| \inner{\nbu G^{0,\ve}_t + \nbu G^{2,\ve}_t -2\nbu G^{1,\ve}_t}{\phi^{1,\ve}_t} + \inner{\nbu G^{1,\ve}_t}{\phi^{0,\ve}_t + \phi^{2,\ve}_t -2\phi^{1,\ve}_t} \right.\right.\\
& \hspace{0.5in} \left.\left. \inner{\nbu G^{2,\ve}_t - \nbu G^{1,\ve}_t}{\phi^{2,\ve}_t - \phi^{1,\ve}_t} + \inner{\nbu G^{0,\ve}_t - \nbu G^{1,\ve}_t}{\phi^{0,\ve}_t - \phi^{1,\ve}_t} \right| \rd t\right] \, \rd x_1 \\
& \le: (\rom{1}) + (\rom{2}) + (\rom{3}) + (\rom{4}),
\end{aligned}
\end{equation}
where we use triangle inequality in the last step and bound \eqref{eq:step3_4terms} by four separate integrals, which are denoted by $(\rom{1}) - (\rom{4})$. Because of Assumption \ref{assump:basic} and the regularity of $V^{\ve}$, the map $(t,x) \mapsto \nbu G^{\ve}(t,x,u^{\ve}(t,x),-\nx V^{\ve},-\nx^2 V^{\ve})$ is lipschitp in $x$ and has bounded Hessian in $x$. So, we can repeat the argument for \eqref{eq:diff2b_bound} and obtain
\begin{equation}\label{eq:diff2G_bound}
\abs{\nbu G^{0,\ve}_t + \nbu G^{2,\ve}_t - 2\nbu G^{1,\ve}_t} \le C \parentheses{\abs{\xeet - \xyet}^2 + \abs{\xlet + \xeet - 2\xyet}},
\end{equation}
where we have used boundedness of $\nx^4 V^{\ve}$. Therefore, we can estimate $(\rom{1})$ through
\begin{equation}\label{eq:lem4_term1}
\begin{aligned}
& (\rom{1}) = \int_{\mX} \EE \sqbra{\int_0^{\ve^{2\alpha}} \abs{\inner{\nbu G^{0,\ve}_t + \nbu G^{2,\ve}_t -2\nbu G^{1,\ve}_t}{\phi^{1,\ve}_t} } \rd t}  \rd x_1 \\
& \le \int_{\mX} \EE \sqbra{\int_0^{\ve^{2\alpha}} \parentheses{\dfrac{1}{\delta^2 \ve^{\alpha}} \abs{\nbu G^{0,\ve}_t + \nbu G^{2,\ve}_t -2\nbu G^{1,\ve}_t}^2 + \delta^2 \ve^{\alpha} \abs{\phi^{1,\ve}_t}^2 } \rd t} \rd x_1 \\
& \le \int_{\mX} \int_0^{\ve^{2\alpha}} \EE \sqbra{\dfrac{1}{\delta^2 \ve^{\alpha}} \abs{\xeet - \xyet}^2 + \dfrac{1}{\delta^2 \ve^{\alpha}} \abs{\xlet + \xeet - 2\xyet}^2 } \rd t \, \rd x_1 + \delta^2 \ve^{\alpha} \rho_1 \norm{\phi}_{L^2}^2 \\
& \le \int_{\mX} \int_0^{\ve^{2\alpha}} \sqbra{\dfrac{1}{\delta^2 \ve^{\alpha}} C \delta^4 + \dfrac{1}{\delta^2 \ve^{\alpha}}C\delta^4 } \rd t \, \rd x_1 + \delta^2 \ve^{\alpha} \rho_1 \le C \delta^2 \ve^{\alpha}.
\end{aligned}
\end{equation}
In the second inequality above, we used \eqref{eq:diff2G_bound}. Also, for the $\phi$ term, the argument is the same as in \emph{Step 2.3}, see \eqref{eq:view_uniform}. In the third inequality above,  we used \eqref{eq:Gronwall4} and \eqref{eq:Gronwall5}. 

Let us consider $(\rom{2})$ next. Similar to \eqref{eq:diff2G_bound}, we can estimate the $\phi$ term in $(\rom{2})$, but note that the constant should scaled by $2/\ve$. (This is explained at the beginning of the proof). So, we have
\begin{equation}\label{eq:diff2phi_bound}
\abs{\phi^{0,\ve}_t + \phi^{2,\ve}_t - 2\phi^{1,\ve}_t} \le \dfrac{C}{\ve} \parentheses{\abs{\xeet - \xyet}^2 + \abs{\xlet + \xeet - 2\xyet}}.
\end{equation}
Therefore
\begin{equation}\label{eq:lem4_term2}
\begin{aligned}
& (\rom{2}) = \int_{\mX} \EE \sqbra{\int_0^{\ve^{2\alpha}} \abs{\inner{\nbu G^{1,\ve}_t}{\phi^{0,\ve}_t + \phi^{2,\ve}_t -2\phi^{1,\ve}_t} } \rd t}  \rd x_1 \\
& \le \int_{\mX} \int_0^{\ve^{2\alpha}} \EE \sqbra{ \parentheses{ \dfrac{\delta^2}{\ve^{2-\alpha}} \abs{\nbu G^{1,\ve}_t}^2 + \dfrac{\ve^{2-\alpha}}{\delta^2}\abs{\phi^{0,\ve}_t + \phi^{2,\ve}_t -2\phi^{1,\ve}_t}^2 } } \rd t \, \rd x_1 \\
& \le C \int_{\mX} \int_0^{\ve^{2\alpha}} \EE \left[ \dfrac{\delta^2}{\ve^{2-\alpha}} \left( \abs{\nx V^{\ve}(t,\xyet) - \nx V^*(t,\xyet)}^2  + \Big|\nx^2 V^{\ve}(t,\xyet) - \right. \right.\\
& \left. \left. \nx^2 V^*(t,\xyet) \Big|^2 + \ve^2 \abs{\phi^{1,\ve}_t}^2 \right) + \dfrac{1}{\delta^2 \ve^{\alpha}} \parentheses{\abs{\xeet - \xyet}^4 + \abs{\xlet + \xeet - 2\xyet}^2} \right] \rd t \, \rd x_1 \\
& \le C \dfrac{\delta^2}{\ve^{2-\alpha}} \parentheses{\rho_1 \normHtwo{V^{\ve}-V^*}^2 + \ve^2 \rho_1 \norm{\phi}_{L^2}^2} + C \int_{\mX} \int_0^{\ve^{2\alpha}} \dfrac{1}{\delta^2 \ve^{\alpha}} C \delta^4 \rd t \, \rd x_1 \le C \delta^2 \ve^{\alpha},
\end{aligned}
\end{equation}
where we have consecutively used: Cauchy's inequality; the estimate of $\nbu G$ in \eqref{eq:bound_nuGve} and $\phi$ terms in \eqref{eq:diff2phi_bound}; the argument at the end of \emph{Step 2.3} for $\phi$ in \eqref{eq:view_uniform} and the Gronwall inequalities \eqref{eq:Gronwall4}, \eqref{eq:Gronwall5}; Lemma \ref{lem:regularity_Vu}. 

\emph{Step 3.4.} We reformulate $(\rom{3})$ in \eqref{eq:step3_4terms} in this step. $(\rom{4})$ can be analyzed in the same way. We want establish the following estimate in the following few steps.
\begin{equation}\label{eq:step3_term3}
(\rom{3}) = \int_{\mX} \EE \sqbra{\int_0^{\ve^{2\alpha}} \abs{\inner{\nbu G^{2,\ve}_t - \nbu G^{1,\ve}_t}{\phi^{2,\ve}_t - \phi^{1,\ve}_t} } \rd t}  \rd x_1 \le C \delta^2 \ve^{\alpha}.
\end{equation}
The idea is similar to the derivation of \eqref{eq:bound_nuGve}. Denote $u^{j,*}_t = u^*(t,x^{j,\ve}_t)$ for $j=0,1,2$. Denote $f^{j,\ve}_t = f(x^{j,\ve}_t, u^{\ve}(t, x^{j,\ve}_t))$ and $f^{j,*}_t = f(x^{j,\ve}_t, u^{*}(t, x^{j,\ve}_t))$ for $f=r, b, \sigma, D$ and $j=0,1,2$. Denote $\nbu G^{j,*}_t = \nbu G(t, x^{j,\ve}_t, u^{j,*}_t, -\nx V^*, -\nx^2 V^*)$ for $j=0,1,2$. Note that $\nbu G^{j,*}_t=0$ due to maximum condition \eqref{eq:max1}. Denote $V^{j,\ve}_t = V^{\ve}(t,x^{j,\ve}_t)$ and $V^{j,*}_t = V^{*}(t,x^{j,\ve}_t)$. By definition of $G$ \eqref{eq:generalized_Hamiltonian},
\begin{equation*}
\begin{aligned}
& \quad \nbu G^{2,\ve}_t - \nbu G^{1,\ve}_t = \parentheses{\nbu G^{2,\ve}_t - \nbu G^{2,*}_t} - \parentheses{\nbu G^{1,\ve}_t - \nbu G^{1,*}_t} \\
& = \parentheses{\nbu r^{1,\ve}_t - \nbu r^{1,*}_t} - \parentheses{\nbu r^{2,\ve}_t - \nbu r^{2,*}_t} \\
& \quad + \parentheses{\nbu b^{1,\ve\top}_t \nx V^{1,\ve}_t - \nbu b^{1,*\top}_t \nx V^{1,*}_t} - \parentheses{\nbu b^{2,\ve\top}_t \nx V^{2,\ve}_t - \nbu b^{2,*\top}_t \nx V^{2,*}_t} \\
& \quad + \nbu \Tr\parentheses{D^{1,\ve}_t \nx^2 V^{1,\ve}_t - D^{1,*}_t \nx^2 V^{1,*}_t} - \nbu \Tr\parentheses{D^{2,\ve}_t \nx^2 V^{2,\ve}_t - D^{2,*}_t \nx^2 V^{2,*}_t}
\end{aligned}
\end{equation*}
Note that the $\nbu$ only operate on $D$ in the last two terms. Therefore,
\begin{equation}\label{eq:step3_term3decomp}
\int_{\mX} \EE \sqbra{\int_0^{\ve^{2\alpha}} \abs{\inner{\nbu G^{2,\ve}_t - \nbu G^{1,\ve}_t}{\phi^{2,\ve}_t - \phi^{1,\ve}_t} } \rd t}  \rd x_1 \le (\rom{5}) + (\rom{6}) + (\rom{7})
\end{equation}
where
\begin{equation*}
(\rom{5}) := \int_{\mX} \EE \sqbra{\int_0^{\ve^{2\alpha}} \abs{\inner{\parentheses{\nbu r^{1,\ve}_t - \nbu r^{1,*}_t} - \parentheses{\nbu r^{2,\ve}_t - \nbu r^{2,*}_t}}{\phi^{2,\ve}_t - \phi^{1,\ve}_t} } \rd t}  \rd x_1,
\end{equation*}
\begin{equation*}
\begin{aligned}
(\rom{6}) &:=  \int_{\mX} \EE \left[ \int_0^{\ve^{2\alpha}} \left| \left<\parentheses{\nbu b^{1,\ve\top}_t \nx V^{1,\ve}_t - \nbu b^{1,*\top}_t \nx V^{1,*}_t} \right. \right. \right. \\
& \hspace{1in} \left. \left. \left. - \parentheses{\nbu b^{2,\ve\top}_t \nx V^{2,\ve}_t - \nbu b^{2,*\top}_t \nx V^{2,*}_t} , \phi^{2,\ve}_t - \phi^{1,\ve}_t \right> \right| \rd t\right]  \rd x_1,
\end{aligned}
\end{equation*}
and
\begin{equation*}
\begin{aligned}
(\rom{7}) &:= \int_{\mX} \EE \left[ \int_0^{\ve^{2\alpha}} \left| \left<\nbu \Tr\parentheses{D^{1,\ve}_t \nx^2 V^{1,\ve}_t - D^{1,*}_t \nx^2 V^{1,*}_t} \right. \right. \right. \\
& \hspace{1in} \left. \left. \left. - \nbu \Tr\parentheses{D^{2,\ve}_t \nx^2 V^{2,\ve}_t - D^{2,*}_t \nx^2 V^{2,*}_t} , \phi^{2,\ve}_t - \phi^{1,\ve}_t \right> \right| \rd t\right]  \rd x_1.
\end{aligned}
\end{equation*}
We want to show that each term above is less than $C \delta^2 \ve^{\alpha}$ in order to recover \eqref{eq:step3_term3}.

\emph{Step 3.5.} In this step, we want to show
\begin{equation}\label{eq:lem4_phi2}
\ve \int_{\mX} \EE \sqbra{\int_0^{\ve^{2\alpha}} \abs{ \phi^{2,\ve}_t - \phi^{1,\ve}_t }^2 \rd t} \rd x_1 \le C \delta^2 \ve^{\alpha}.
\end{equation}
We first give a generalization of the argument for \eqref{eq:view_uniform}. Like before, we define three new processes $\overline{x}^{j,\ve}_t$ for $j=0,1,2$ that start at $\overline{x}^{j,\ve}_0 \sim \text{Unif}(\mX)$ with $\overline{x}^{2,\ve}_0 - \overline{x}^{1,\ve}_0 \equiv \overline{x}^{1,\ve}_0 - \overline{x}^{2,\ve}_0 \equiv \delta z$. $\overline{x}^{j,\ve}_t$ follow the same dynamic as $x^{j,\ve}_t$. Again, denote $\overline{\rho}^{j,\ve}(t,x)$ the density for $\overline{x}^{j,\ve}_t$. Then, $\overline{\rho}^{0,\ve}(t,x) = \overline{\rho}^{1,\ve}(t,x) = \overline{\rho}^{2,\ve}(t,x)$ because they share the same distribution. So,
\begin{equation}\label{eq:lem4_step3_view1}
\begin{aligned}
& \quad \int_{\mX} \EE \sqbra{\int_0^{\ve^{2\alpha}}  \inner{\phi^{1,\ve}_t}{\phi^{1,\ve}_t} \rd t} \rd x_1 \\
& \equiv \int_{\mX} \EE \sqbra{\int_0^{\ve^{2\alpha}}  \inner{\phi(t,\xyet)}{\phi(t,\xyet)} \rd t ~\Big|~ x^{1,\ve}_0=x_1} \rd x_1 \\
& = \EE_{\overline{x}^{1,\ve}_0 \sim \text{Unif}(\mX)} \EE \sqbra{\int_0^{\ve^{2\alpha}}  \inner{\phi(t,\overline{x}^{1,\ve}_t)}{\phi(t,\overline{x}^{1,\ve}_t)} \rd t ~\Big|~ \overline{x}^{1,\ve}_0} \\
& = \EE \sqbra{\int_0^{\ve^{2\alpha}}  \inner{\phi(t,\overline{x}^{1,\ve}_t)}{\phi(t,\overline{x}^{1,\ve}_t)} \rd t} = \int_{\mX} \int_0^{\ve^{2\alpha}} \inner{\phi(t,x)}{\phi(t,x)} \overline{\rho}^{1,\ve}(t,x) \rd t \, \rd x \\
& = \int_{\mX} \int_0^{\ve^{2\alpha}} \inner{\phi(t,x)}{\phi(t,x)} \overline{\rho}^{2,\ve}(t,x) \rd t \, \rd x = \int_{\mX} \EE \sqbra{\int_0^{\ve^{2\alpha}}  \inner{\phi^{2,\ve}_t}{\phi^{2,\ve}_t} \rd t} \rd x_1.
\end{aligned}
\end{equation}
Similarly, since $(\overline{x}^{0,\ve}_t, \overline{x}^{1,\ve}_t)$ and $(\overline{x}^{1,\ve}_t, \overline{x}^{2,\ve}_t)$ share the same joint distribution, we can show
\begin{equation}\label{eq:lem4_step3_view2}
\int_{\mX} \EE \sqbra{\int_0^{\ve^{2\alpha}}  \inner{\phi^{1,\ve}_t}{\phi^{0,\ve}_t} \rd t} \rd x_1 = \int_{\mX} \EE \sqbra{\int_0^{\ve^{2\alpha}}  \inner{\phi^{2,\ve}_t}{\phi^{1,\ve}_t} \rd t} \rd x_1.
\end{equation}
Therefore,
\begin{equation*}
\begin{aligned}
& \quad \ve \int_{\mX} \EE \sqbra{\int_0^{\ve^{2\alpha}} \abs{ \phi^{2,\ve}_t - \phi^{1,\ve}_t }^2 \rd t} \rd x_1 \\
& = \ve \int_{\mX} \EE \sqbra{\int_0^{\ve^{2\alpha}} \parentheses{\inner{\phi^{2,\ve}_t}{\phi^{2,\ve}_t} + \inner{\phi^{1,\ve}_t}{\phi^{1,\ve}_t} - 2\inner{\phi^{2,\ve}_t}{\phi^{1,\ve}_t}} \rd t} \rd x_1 \\
& = \ve \int_{\mX} \EE \sqbra{\int_0^{\ve^{2\alpha}} \parentheses{2\inner{\phi^{1,\ve}_t}{\phi^{1,\ve}_t} - \inner{\phi^{2,\ve}_t}{\phi^{1,\ve}_t} - \inner{\phi^{1,\ve}_t}{\phi^{0,\ve}_t}} \rd t} \rd x_1 \\
& = \int_{\mX} \EE \sqbra{\int_0^{\ve^{2\alpha}} \inner{\phi^{1,\ve}_t}{ \ve \parentheses{2\phi^{1,\ve}_t - \phi^{2,\ve}_t  - \phi^{0,\ve}_t}} \rd t} \rd x_1 \\
& \le C \int_{\mX} \EE \sqbra{\int_0^{\ve^{2\alpha}} \sqbra{\delta^2 \ve^{\alpha} \abs{\phi^{1,\ve}_t}^2 + \dfrac{1}{\delta^2 \ve^{\alpha}} \parentheses{\abs{2\xyet-\xeet-\xlet}^2 + \abs{\xyet-\xeet}^4}} \rd t} \rd x_1 \\
& \le C \parentheses{\delta^2 \ve^{\alpha} \rho_1 \norm{\phi}_{L^2}^2 + \delta^{-2} \ve^{2\alpha-\alpha} \delta^4} \le C \delta^2 \ve^{\alpha}.
\end{aligned}
\end{equation*}
We used \eqref{eq:lem4_step3_view1} and \eqref{eq:lem4_step3_view2} in the second equality above. In the first inequality, we used estimate for $\phi$ in \eqref{eq:diff2phi_bound} and Cauchy's inequality. The second inequality is because of \eqref{eq:view_uniform} and the Gronwall inequalities \eqref{eq:Gronwall4} \eqref{eq:Gronwall5}. Therefore, \eqref{eq:lem4_phi2} holds.

\emph{Step 3.6.} We estimate $(\rom{5})$ in this step. Let us pick one dimension $i \le m$ and use mean value theorem. We get
\begin{align*}
& \quad \partial_{u_i} r^{1,\ve}_t - \partial_{u_i} r^{2,\ve}_t \equiv \partial_{u_i} r(\xyet, u^{1,\ve}_t) - \partial_{u_i} r(\xeet, u^{2,\ve}_t)\\
& = \inner{\nabla_{x,u} \partial_{u_i} r\parentheses{ (1-c)\xyet +c\xeet, (1-c)u^{1,\ve}_t + c u^{2,\ve}_t }}{(\xyet-\xeet, u^{1,\ve}_t - u^{2,\ve}_t)}
\end{align*}
for some $c \in [0,1]$. Therefore,
\begin{equation*}
\begin{aligned}
& \quad \abs{ \parentheses{\partial_{u_i} r^{1,\ve}_t - \partial_{u_i} r^{2,\ve}_t} - \parentheses{\partial_{u_i} r^{1,*}_t - \partial_{u_i} r^{2,*}_t} } \\
& = \left| \inner{\nabla_{x,u} \partial_{u_i} r\parentheses{ (1-c)\xyet +c\xeet, (1-c)u^{1,\ve}_t + c u^{2,\ve}_t }}{(\xyet-\xeet, u^{1,\ve}_t - u^{2,\ve}_t)} \right. \\
& \quad - \left. \inner{\nabla_{x,u} \partial_{u_i} r\parentheses{ (1-c')x^{1,\ve}_t +c'x^{2,\ve}_t, (1-c')u^{1,*}_t + c' u^{2,*}_t }}{(x^{1,\ve}_t-x^{2,\ve}_t, u^{1,*}_t - u^{2,*}_t)} \right| \\
& \le L \parentheses{ \abs{\xyet-\xeet} + \abs{u^{1,\ve}_t - u^{2,\ve}_t} + \abs{u^{1,\ve}_t - u^{1,*}_t} + \abs{u^{2,\ve}_t - u^{2,*}_t} } \cdot \\
& \quad \parentheses{\abs{\xyet-\xeet} + \abs{u^{1,\ve}_t - u^{2,\ve}_t}} + K \abs{\parentheses{u^{1,\ve}_t - u^{2,\ve}_t} -  \parentheses{u^{1,*}_t - u^{2,*}_t}} \\
& \le L \sqbra{(L+1)\abs{\xyet-\xeet} + \ve \parentheses{\abs{\phi^{1,\ve}_t} + \abs{\phi^{2,\ve}_t}}} (L+1) \abs{\xyet-\xeet} + K\ve \abs{\phi^{1,\ve}_t - \phi^{2,\ve}_t} \\
& \le C \parentheses{\abs{\xyet-\xeet}^2 + \ve \abs{\xyet-\xeet} \parentheses{\abs{\phi^{1,\ve}_t} + \abs{\phi^{2,\ve}_t}} + \ve \abs{\phi^{1,\ve}_t - \phi^{2,\ve}_t} }.
\end{aligned}
\end{equation*}
In the first equality, we apply the mean value theorem twice, with $c,c' \in [0,1]$. In the first inequality, we used: 
$$\abs{\inner{a_1}{b_1} - \inner{a_2}{b_2}} \le \abs{a_1-a_2} \, \abs{b_1} + \abs{a_2} \, \abs{b_1-b_2},$$
 and the Lipschitz and boundedness property (in Assumption \ref{assump:basic}) of the derivatives for $r$. In the second inequality, we use $u^{\ve} = u^* + \ve \phi$ and the Lipschitz property of $u^{\ve}$. Applying the same argument in all the dimensions, we get
\begin{equation*}
\begin{aligned}
& \quad \abs{\parentheses{\nbu r^{1,\ve}_t - \nbu r^{1,*}_t} - \parentheses{\nbu r^{2,\ve}_t - \nbu r^{2,*}_t}} \\
& \le C \parentheses{\abs{\xyet-\xeet}^2 + \ve \abs{\xyet-\xeet} \parentheses{\abs{\phi^{1,\ve}_t} + \abs{\phi^{2,\ve}_t}} + \ve \abs{\phi^{1,\ve}_t - \phi^{2,\ve}_t} }.
\end{aligned}
\end{equation*}
Therefore
\begin{equation*}
\begin{aligned}
& (\rom{5}) = \int_{\mX} \EE \sqbra{\int_0^{\ve^{2\alpha}} \abs{\inner{\parentheses{\nbu r^{1,\ve}_t - \nbu r^{1,*}_t} - \parentheses{\nbu r^{2,\ve}_t - \nbu r^{2,*}_t}}{\phi^{2,\ve}_t - \phi^{1,\ve}_t} } \rd t}  \rd x_1 \\
& \le C \int_{\mX} \EE \left[ \int_0^{\ve^{2\alpha}} \left( \abs{\xyet-\xeet}^2 + \ve \abs{\xyet-\xeet} \parentheses{\abs{\phi^{1,\ve}_t} + \abs{\phi^{2,\ve}_t}} \right. \right.\\
& \hspace{1in} \left. \left. + \ve \abs{\phi^{1,\ve}_t - \phi^{2,\ve}_t} \right) \abs{ \phi^{2,\ve}_t - \phi^{1,\ve}_t } \rd t \right]  \rd x_1.
\end{aligned}
\end{equation*}
Continuing the analysis, we get
\begin{equation}\label{eq:step3_term5_bound}
\begin{aligned}
& (\rom{5}) \le C \int_{\mX} \EE \left[ \int_0^{\ve^{2\alpha}} \left( \dfrac{1}{\ve} \abs{\xyet-\xeet}^3 + \delta^2 \ve^{\alpha} \parentheses{\abs{\phi^{1,\ve}_t}^2 + \abs{\phi^{2,\ve}_t}^2} \right. \right. \\
& \hspace{1in} \left. \left. + \dfrac{1}{\delta^2 \ve^{\alpha}}\abs{\xyet-\xeet}^4 + \ve  \abs{ \phi^{2,\ve}_t - \phi^{1,\ve}_t}^2
 \right)  \rd t \right]  \rd x_1\\
& \le C \parentheses{ C\ve^{2\alpha-1} \delta^3 + 2\delta^2 \ve^{\alpha} \rho_1 \norm{\phi}_{L^2}^2 + C \ve^{2\alpha-\alpha} \delta^{-2+4} + \delta^2 \ve^{\alpha}} \le C \delta^2 \ve^{\alpha}.
\end{aligned}
\end{equation}
In the first inequality, we used the Lipschitz condition
$$\abs{\pyet-\peet} \le \dfrac{2L}{\ve} \abs{\xyet - \xeet}$$
and Cauchy's inequality. In the second inequality, we used Gronwall inequality of order 3, \eqref{eq:view_uniform}, Gronwall inequality of order 4 \eqref{eq:Gronwall4}, and \eqref{eq:lem4_phi2} in \emph{Step 3.5}. We did not prove Gronwall inequality of order 3, but it can be obtained directly from the order 2 \eqref{eq:Gronwall2} and order 4 \eqref{eq:Gronwall4} inequalities using Cauchy's inequality. Therefore, we finished estimation of $(\rom{5})$.

\emph{Step 3.7.} We estimate $(\rom{6})$ and $(\rom{7})$ in this step. Recall the definition
\begin{equation*}
\begin{aligned}
(\rom{6}) &=  \int_{\mX} \EE \left[ \int_0^{\ve^{2\alpha}} \left| \left<\parentheses{\nbu b^{1,\ve\top}_t \nx V^{1,\ve}_t - \nbu b^{1,*\top}_t \nx V^{1,*}_t} \right. \right. \right. \\
& \hspace{1in} \left. \left. \left. - \parentheses{\nbu b^{2,\ve\top}_t \nx V^{2,\ve}_t - \nbu b^{2,*\top}_t \nx V^{2,*}_t} , \phi^{2,\ve}_t - \phi^{1,\ve}_t \right> \right| \rd t\right]  \rd x_1.
\end{aligned}
\end{equation*}
We further decompose $(\rom{6})$ into two parts. A simple triangle inequality gives us
$$(\rom{6}) \le (\rom{8}) + (\rom{9}),$$
where
\begin{equation}\label{eq:step3_term8}
\begin{aligned}
(\rom{8}) &:=  \int_{\mX} \EE \left[ \int_0^{\ve^{2\alpha}} \left| \left<\parentheses{\nbu b^{1,\ve\top}_t \nx V^{1,\ve}_t - \nbu b^{1,*\top}_t \nx V^{1,\ve}_t} \right. \right. \right. \\
& \hspace{1in} \left. \left. \left. - \parentheses{\nbu b^{2,\ve\top}_t \nx V^{2,\ve}_t - \nbu b^{2,*\top}_t \nx V^{2,\ve}_t} , \phi^{2,\ve}_t - \phi^{1,\ve}_t \right> \right| \rd t\right]  \rd x_1.
\end{aligned}
\end{equation}
and
\begin{equation}\label{eq:step3_term9}
\begin{aligned}
(\rom{9}) &:=  \int_{\mX} \EE \left[ \int_0^{\ve^{2\alpha}} \left| \left<\parentheses{\nbu b^{1,*\top}_t \nx V^{1,\ve}_t - \nbu b^{1,*\top}_t \nx V^{1,*}_t} \right. \right. \right. \\
& \hspace{1in} \left. \left. \left. - \parentheses{\nbu b^{2,*\top}_t \nx V^{2,\ve}_t - \nbu b^{2,*\top}_t \nx V^{2,*}_t} , \phi^{2,\ve}_t - \phi^{1,\ve}_t \right> \right| \rd t\right]  \rd x_1.
\end{aligned}
\end{equation}
The analysis for $(\rom{8})$ is exactly the same as the analysis for $(\rom{5})$ in \emph{Step 3.6}, except that the function $\nbu r$ is replaced by $\nbu b\tp \nx V^{\ve}$. By Assumption \ref{assump:basic}, and the regularity for the value functions, $\nbu b\tp \nx V^{\ve}$ and $\nbu r$ share the same properties that are necessary to prove \eqref{eq:step3_term5_bound}. Therefore, we can show
\begin{equation}\label{eq:step3_term8_bound}
(\rom{8}) \le C \delta^2 \ve^{\alpha}.
\end{equation}

We consider $(\rom{9})$ next. We want to show
\begin{equation}\label{eq:step3_term9_bound}
\begin{aligned}
& (\rom{9}) =  \int_{\mX} \EE \left[ \int_0^{\ve^{2\alpha}} \left| \left< \nbu b^{1,*\top}_t \parentheses{\nx V^{1,\ve}_t - \nx V^{1,*}_t} \right. \right. \right. \\
& \hspace{1in} \left. \left. \left. - \nbu b^{2,*\top}_t \parentheses{\nx V^{2,\ve}_t - \nx V^{2,*}_t} , \phi^{2,\ve}_t - \phi^{1,\ve}_t \right> \right| \rd t\right]  \rd x_1 \le C \delta^2 \ve^{\alpha}.
\end{aligned}
\end{equation}
Again, we consider one single dimension. We pick $i\le m$, $l \le n$ and denote $b^{j,*,l}_t$ the $l$-th entry of $b^{j,*}_t$ for $j=0,1,2$. We have
\begin{equation}\label{lem4_bV4terms}
\begin{aligned}
& \quad \abs{\partial_{u_i} b^{1,*,l}_t \parentheses{\partial_{x_l} V^{1,\ve}_t - \partial_{x_l} V^{1,*}_t} - \partial_{u_i} b^{2,*,l}_t \parentheses{\partial_{x_l} V^{2,\ve}_t - \partial_{x_l} V^{2,*}_t} } \\
& \le \abs{ \parentheses{\partial_{u_i} b^{1,*,l}_t - \partial_{u_i} b^{2,*,l}_t} \parentheses{\partial_{x_l} V^{1,\ve}_t - \partial_{x_l} V^{1,*}_t}} \\
& \quad + \abs{\partial_{u_i} b^{2,*,l}_t \sqbra{\parentheses{\partial_{x_l} V^{1,\ve}_t - \partial_{x_l} V^{1,*}_t} - \parentheses{\partial_{x_l} V^{2,\ve}_t - \partial_{x_l} V^{2,*}_t}}}.
\end{aligned}
\end{equation}
We estimate the two terms in \eqref{lem4_bV4terms} next. For the first, we have
\begin{equation}\label{lem4_bV4terms1}
\begin{aligned}
\abs{ \parentheses{\partial_{u_i} b^{1,*,l}_t - \partial_{u_i} b^{2,*,l}_t} \parentheses{\partial_{x_l} V^{1,\ve}_t - \partial_{x_l} V^{1,*}_t}} \le L \abs{\xyet - \xeet} \abs{\nx V^{1,\ve}_t - \nx V^{1,*}_t}
\end{aligned}
\end{equation}
because of Assumption \ref{assump:basic}. For the second, we use the technique in \emph{Step 3.6}. We have
\begin{equation}\label{lem4_bV4terms2}
\begin{aligned}
& \quad \abs{ \parentheses{\partial_{x_l} V^{1,\ve}_t - \partial_{x_l} V^{2,\ve}_t} - \parentheses{\partial_{x_l} V^{1,*}_t - \partial_{x_l} V^{2,*}_t}} \\
& = \left| \inner{\nx \partial_{x_l} V^{\ve}\parentheses{t, (1-c)\xyet + c\xeet}}{\xyet - \xeet} \right.\\
& \quad - \left. \inner{\nx \partial_{x_l} V^*\parentheses{t, (1-c')\xyet + c'\xeet}}{\xyet - \xeet} \right| \\
& \le \abs{\nx \partial_{x_l} V^{\ve}\parentheses{t, (1-c)\xyet + c\xeet} - \nx \partial_{x_l} V^*\parentheses{t, (1-c')\xyet + c'\xeet}} \abs{\xyet - \xeet} \\
& \le \parentheses{\abs{ \nx \partial_{x_l} V^{1,\ve}_t - \nx \partial_{x_l} V^{1,*}_t } + L(c+c') \abs{\xyet - \xeet} } \abs{\xyet - \xeet},
\end{aligned}
\end{equation}
where we have consecutively used (two) mean value theorems, Cauchy's inequality, and the Lipschitz property for the derivatives of $V^{\ve}$ and $V^*$. Combining \eqref{lem4_bV4terms1}, \eqref{lem4_bV4terms2}, and $\abs{\nbu b} \le K$ into \eqref{lem4_bV4terms}, and repeat the same argument in all the dimensions, we obtain
\begin{equation}\label{eq:lem4_bV4terms_bound}
\begin{aligned}
& \quad \abs{\nbu b^{1,*\top}_t \parentheses{\nx V^{1,\ve}_t - \nx V^{1,*}_t} - \nbu b^{2,*\top}_t \parentheses{\nx V^{2,\ve}_t - \nx V^{2,*}_t} } \\
& \le C \abs{\xyet - \xeet} \parentheses{ \abs{\nx V^{1,\ve}_t - \nx V^{1,*}_t} + \abs{\nx^2 V^{1,\ve}_t - \nx^2 V^{1,*}_t} + \abs{\xyet - \xeet}}.
\end{aligned}
\end{equation}
Therefore,
\begin{equation*}
\begin{aligned}
& (\rom{9}) \le \int_{\mX} \EE \left[ \int_0^{\ve^{2\alpha}} \abs{\nbu b^{1,*\top}_t \parentheses{\nx V^{1,\ve}_t - \nx V^{1,*}_t} - \nbu b^{2,*\top}_t \parentheses{\nx V^{2,\ve}_t - \nx V^{2,*}_t} } \right. \\
& \hspace{1.3in} \left. \abs{\phi^{2,\ve}_t - \phi^{1,\ve}_t } \rd t\right]  \rd x_1 \\
& \le C \int_{\mX} \EE \left[ \int_0^{\ve^{2\alpha}}  \abs{\xyet - \xeet} \left( \abs{\nx V^{1,\ve}_t - \nx V^{1,*}_t} + \abs{\nx^2 V^{1,\ve}_t - \nx^2 V^{1,*}_t} \right. \right. \\
& \hspace{1in} \left. \left. + \abs{\xyet - \xeet}\right) \dfrac{2L}{\ve} \abs{\xyet - \xeet} \rd t\right]  \rd x_1, \\
\end{aligned}
\end{equation*}
where we used \eqref{eq:lem4_bV4terms_bound} and the Lipschitz condition for $\phi$
$$\abs{\pyet-\peet} \le \dfrac{2L}{\ve} \abs{\xyet - \xeet}.$$ 
Continuing the analysis, we get
\begin{equation*}
\begin{aligned}
& (\rom{9}) \le C \int_{\mX} \EE \left[ \int_0^{\ve^{2\alpha}}  \abs{\xyet - \xeet}^2 \dfrac{1}{\ve} \left( \abs{\nx V^{1,\ve}_t - \nx V^{1,*}_t} + \abs{\nx^2 V^{1,\ve}_t - \nx^2 V^{1,*}_t} \right) \right. \\
& \hspace{1in}  \left. + \dfrac{1}{\ve} \abs{\xyet - \xeet}^3 \rd t\right]  \rd x_1, \\
& \le C \int_{\mX} \int_0^{\ve^{2\alpha}} \EE \left[ \dfrac{1}{\delta^2 \ve^{\alpha}} \abs{\xyet - \xeet}^4 + \dfrac{\delta^2\ve^{\alpha}}{\ve^2} \abs{\nx V^{1,\ve}_t - \nx V^{1,*}_t}^2   \right. \\
& \hspace{1in}  \left. + \dfrac{\delta^2\ve^{\alpha}}{\ve^2} \abs{\nx^2 V^{1,\ve}_t - \nx^2 V^{1,*}_t}^2 + \dfrac{1}{\ve} \abs{\xyet - \xeet}^3 \right] \rd t  \rd x_1,
\end{aligned}
\end{equation*}
where we used Cauchy's inequality in the second inequality. Next, using Gronwall inequalities and the analysis for the $\nx V$ and $\nx^2 V$ terms in \emph{Step 2.3}, we have
\begin{equation*}
\begin{aligned}
& (\rom{9}) \le  C \int_{\mX} \int_0^{\ve^{2\alpha}} \parentheses{ \dfrac{\delta^4}{\delta^2 \ve^{\alpha}} + \dfrac{\delta^3}{\ve} }  \rd t \, \rd x_1 + C \dfrac{\delta^2\ve^{\alpha}}{\ve^2} \normHtwo{V^{\ve} - V^*}^2 \le C \delta^2 \ve^{\alpha}.
\end{aligned}
\end{equation*}
Note that we used $\delta \le \ve$ and Lemma \ref{lem:regularity_Vu} in the last inequality. Therefore, \eqref{eq:step3_term9_bound} holds. Combining \eqref{eq:step3_term8_bound} and \eqref{eq:step3_term9_bound}, we obtain
\begin{equation}\label{eq:step3_term6_bound}
\begin{aligned}
(\rom{6}) &=  \int_{\mX} \EE \left[ \int_0^{\ve^{2\alpha}} \left| \left<\parentheses{\nbu b^{1,\ve\top}_t \nx V^{1,\ve}_t - \nbu b^{1,*\top}_t \nx V^{1,*}_t} \right. \right. \right. \\
& \hspace{0.3in} \left. \left. \left. - \parentheses{\nbu b^{2,\ve\top}_t \nx V^{2,\ve}_t - \nbu b^{2,*\top}_t \nx V^{2,*}_t} , \phi^{2,\ve}_t - \phi^{1,\ve}_t \right> \right| \rd t\right]  \rd x_1 \le C \delta^2 \ve^{\alpha}.
\end{aligned}
\end{equation}
The analysis for $(\rom{7})$ is the same as the analysis for $(\rom{6})$. We decompose $(\rom{7})$ into two parts like \eqref{eq:step3_term8} and \eqref{eq:step3_term9}. The first one can be analyzed using the same technique in \emph{Step 3.6}. The second one be analyzed using the same technique in this step: considering each dimension separately and combine them. Therefore,
\begin{equation}\label{eq:step3_term7_bound}
\begin{aligned}
(\rom{7}) &= \int_{\mX} \EE \left[ \int_0^{\ve^{2\alpha}} \left| \left<\nbu \Tr\parentheses{D^{1,\ve}_t \nx^2 V^{1,\ve}_t - D^{1,\ve}_t \nx^2 V^{1,\ve}_t} \right. \right. \right. \\
& \hspace{0.5in} \left. \left. \left. - \nbu \Tr\parentheses{D^{2,\ve}_t \nx^2 V^{2,\ve}_t - D^{2,\ve}_t \nx^2 V^{2,\ve}_t} , \phi^{2,\ve}_t - \phi^{1,\ve}_t \right> \right| \rd t\right]  \rd x_1 \le C \delta^2 \ve^{\alpha}.
\end{aligned}
\end{equation}
Combining \eqref{eq:step3_term5_bound}, \eqref{eq:step3_term6_bound}, and \eqref{eq:step3_term7_bound} into \eqref{eq:step3_term3decomp}, we obtain \eqref{eq:step3_term3}. i.e., we get the bound for $(\rom{3})$. $(\rom{4})$ can be analyzed in exactly the same way, so we also have
\begin{equation}\label{eq:step3_term4}
(\rom{4}) = \int_{\mX} \EE \sqbra{\int_0^{\ve^{2\alpha}} \abs{\inner{\nbu G^{0,\ve}_t - \nbu G^{1,\ve}_t}{\phi^{0,\ve}_t - \phi^{1,\ve}_t} } \rd t}  \rd x_1 \le C \delta^2 \ve^{\alpha}.
\end{equation}
Combining \eqref{eq:lem4_term1}, \eqref{eq:lem4_term2}, \eqref{eq:step3_term3}, and \eqref{eq:step3_term4} into \eqref{eq:step3_4terms}, we obtain \eqref{eq:lem4step3temp2}.

\emph{Step 3.8.} We want to show \eqref{eq:lem4step3temp3}
\begin{equation*}
\int_{\ve^{2\alpha}}^T \int_{\mX} \abs{  \inner{\nbu G(t,x,u^{\ve},-\nx V^{\ve}, -\nx^2 V^{\ve})}{\phi(t,x)}} \abs{\rho^{0,\ve} + \rho^{2,\ve} - 2\rho^{1,\ve}} \rd x \, \rd t \le C \delta^2 \ve^{\alpha}
\end{equation*}
in this step. The spirit and notation are much the same as \emph{Step 2.4}. $\rho^{j,\ve}(t,x) = p^{\ve}(t,x;0,x_j)$ for $j=0,1,2$, where $p^{\ve}(t,x;s,y)$ is the fundamental solution of the Fokker--Planck equation $\partial_t \rho = \mG_{\ve}^{\dagger} \rho$. $q^{\ve}(t,x;s,y) := p^{\ve}(s,y;t,x)$ is the fundamental solution of the backward Kolmogorov equation $\partial_t \psi + \mG_{\ve} \psi = 0$. Applying the generalized mean value theorem \eqref{eq:mean_value} and lemma \eqref{eq:lemma4.2} with $k=2$ to $q^{\ve}$, we obtain 
\begin{equation}\label{eq:fund_d2}
\begin{aligned}
& \quad \abs{\rho^{0,\ve}(t,x) - \rho^{2,\ve}(t,x) - 2\rho^{1,\ve}(t,x)} \\
& = \abs{ q^{\ve}(0,x_0;t,x) + q^{\ve}(0,x_2;t,x) -2 q^{\ve}(0,x_1;t,x)}  \\
& =  \abs{(x_2-x_1)\tp \nx^2 q^{\ve}(0,(1-c)x_1 + cx_2;t,x) \, (x_2-x_1)}\\
& \le C t^{-(2+n)/2} \abs{x_1-x_2}^2 = C t^{-(2+n)/2} \delta^2.
\end{aligned}
\end{equation}
Therefore, 
\begin{equation*}
\begin{aligned}
& \quad \int_{\ve^{2\alpha}}^T \int_{\mX} \abs{  \inner{\nbu G(t,x,u^{\ve},-\nx V^{\ve}, -\nx^2 V^{\ve})}{\phi(t,x)}} \abs{\rho^{0,\ve} + \rho^{2,\ve} - 2\rho^{1,\ve}} \rd x \, \rd t \\
& \le C \int_{\ve^{2\alpha}}^T \int_{\mX}  \parentheses{\ve\abs{\phi(t,x)} + \abs{\nx V^{\ve} - \nx V^*} + \abs{\nx^2 V^{\ve} - \nx^2 V^*}}  \abs{\phi(t,x)} t^{-(2+n)/2} \delta^2 \,  \rd x \, \rd t \\
& \le C \delta^2 \ve^{-(2+n)\alpha} \int_{\ve^{2\alpha}}^T \int_{\mX} \parentheses{ \ve \abs{\phi(t,x)}^2 + \dfrac{1}{\ve} \abs{\nx V^{\ve} - \nx V^*}^2 + \dfrac{1}{\ve} \abs{\nx^2 V^{\ve} - \nx^2 V^*}^2  } \rd x \, \rd t \\
& \le C \delta^2 \parentheses{ \ve^{1-(2+n)\alpha} \norm{\phi}_{L^2}^2 + \ve^{-1-(2+n)\alpha} \normHtwo{V^{\ve} - V^*}^2 } \le C \delta^2 \ve^{1-(2+n)\alpha} = C \delta^2 \ve^{\alpha}.
\end{aligned}
\end{equation*}
In the first inequality, we used \eqref{eq:bound_nuGve} and \eqref{eq:fund_d2}. In the second inequality, we extract a constant $\delta^2 \ve^{-(2+n)\alpha} \ge \delta^2 t^{-(2+n)/2}$ and use Cauchy's inequality. In the last inequality, we used Lemma \ref{lem:regularity_Vu}. Therefore, \eqref{eq:lem4step3temp3} holds. To conclude, combining \eqref{eq:lem4step3temp2} and \eqref{eq:lem4step3temp3}, we recover \eqref{eq:lem4step3temp1}, which implies \eqref{eq:lem4step3temp0}. Hence, \eqref{eq:lem4step3} holds, and we finish \emph{Step 3}.

Finally, combining the three steps \eqref{eq:lem4step1}, \eqref{eq:lem4step2}, and \eqref{eq:lem4step3}, we get \eqref{eq:quadratic_Vu} and finish proving Lemma \ref{lem:quadratic_Vu}.
\end{proof}

We recall the definition of $G$ for the reader convenience:
\begin{equation*}
G(t,x,u,p,P) = \Tr\parentheses{P \, D(x,u)} + \inner{p}{b(x,u)} - r(x,u).
\end{equation*}
For a control function $u(t,x)$, we define a corresponding new function through
\begin{equation}\label{eq:udiamond}
u^{\diamond}(t,x) := \argmax_{u \in \RR^m} G(t,x,u,-\nx V_u(t,x), -\nx^2 V_u(t,x)).
\end{equation}
This $u^{\diamond}$ is well-defined because $G$ is strongly concave in $u$.
By the uniqueness of the solution to the HJB equation, $u = u^{\diamond}$ if and only if $u$ is the optimal control.
Also, using the $\mu_G$-strong concavity, we have
\begin{equation}\label{eq:strong_concave}
\begin{aligned}
& \quad \abs{\nbu G(t,x,u(t,x),-\nx V_u, -\nx^2 V_u)} \\
& = \abs{\nbu G(t,x,u(t,x),-\nx V_u, -\nx^2 V_u) - \nbu G(t,x,u^{\diamond}(t,x),-\nx V_u, -\nx^2 V_u)} \\
& \ge \mu_G \abs{u(t,x) - u^{\diamond}(t,x)}
\end{aligned}
\end{equation}
We give the following lemma regarding this implicit function
\begin{lem}[Lipschitz condition of the implicit function induced by $G$]\label{lem:implicit}
Let Assumption \ref{assump:basic} hold. Then there exists a constant $C_5>0$ such that for any two control functions $u_1, u_2 \in \mU$, and any $(t,x) \in [0,T] \times \mX$,
\begin{equation}\label{eq:implicit}
\begin{aligned}
& \quad \abs{u_1^{\diamond}(t,x) - u_2^{\diamond}(t,x)} \\
& \le C_5 \parentheses{\abs{\nx V_{u_1}(t,x) - \nx V_{u_2}(t,x)} + \abs{\nx^2 V_{u_1}(t,x) - \nx^2 V_{u_2}(t,x)}}.
\end{aligned}
\end{equation}
\end{lem}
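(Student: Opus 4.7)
The plan is to exploit two structural properties of the generalized Hamiltonian: its uniform strong concavity in $u$ with constant $\mu_G$, and its affine dependence on the momentum $p$ and Hessian variable $P$ in the definition \eqref{eq:generalized_Hamiltonian}. The estimate will follow by subtracting the two defining first-order conditions, pairing the resulting identity with $u_1^{\diamond}-u_2^{\diamond}$, and controlling each piece separately.

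Set $p_i := -\nx V_{u_i}(t,x)$ and $P_i := -\nx^2 V_{u_i}(t,x)$ for $i=1,2$, so by the definition \eqref{eq:udiamond} of $u_i^{\diamond}$ one has $\nbu G(t,x,u_i^{\diamond},p_i,P_i) = 0$. Adding and subtracting $\nbu G(t,x,u_2^{\diamond},p_1,P_1)$ produces the identity
\begin{equation*}
\nbu G(t,x,u_1^{\diamond},p_1,P_1) - \nbu G(t,x,u_2^{\diamond},p_1,P_1) = \nbu G(t,x,u_2^{\diamond},p_2,P_2) - \nbu G(t,x,u_2^{\diamond},p_1,P_1).
\end{equation*}
Pair both sides with $u_1^{\diamond}-u_2^{\diamond}$. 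On the left, uniform $\mu_G$-strong concavity of $G(t,x,\cdot,p_1,P_1)$ gives the monotonicity bound $\inner{u_1^{\diamond}-u_2^{\diamond}}{\text{LHS}} \le -\mu_G \abs{u_1^{\diamond}-u_2^{\diamond}}^2$, in the spirit of \eqref{eq:strong_concave}. Applying Cauchy--Schwarz on the right-hand side and dividing by $\abs{u_1^{\diamond}-u_2^{\diamond}}$ then yields
\begin{equation*}
\mu_G \abs{u_1^{\diamond}-u_2^{\diamond}} \le \abs{\nbu G(t,x,u_2^{\diamond},p_2,P_2) - \nbu G(t,x,u_2^{\diamond},p_1,P_1)}.
\end{equation*}

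Since $G(t,x,u,p,P) = \Tr\parentheses{P D(x,u)} + \inner{p}{b(x,u)} - r(x,u)$ is affine in $(p,P)$, the right-hand side above equals $\nbu b(x,u_2^{\diamond})\tp (p_2-p_1) + \nbu \Tr\parentheses{(P_2-P_1)\, D(x,u_2^{\diamond})}$. By Assumption \ref{assump:basic} we have $\abs{\nbu b}, \abs{\nbu D} \le K$, so a componentwise Cauchy--Schwarz bound yields a constant $C$ depending only on $K$ and the dimensions such that this quantity is at most $C\parentheses{\abs{p_1-p_2}+\abs{P_1-P_2}}$. Dividing by $\mu_G$ and substituting back $p_i = -\nx V_{u_i}$, $P_i = -\nx^2 V_{u_i}$ gives \eqref{eq:implicit} with $C_5 = C/\mu_G$. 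The argument is essentially mechanical; the only subtlety is confirming that $u_1^{\diamond}(t,x)$ and $u_2^{\diamond}(t,x)$ lie in the bounded range for which strong concavity and the derivative bounds on $b$ and $D$ are assumed to hold, which follows from the Schauder a priori bounds on $\nx V_u$ and $\nx^2 V_u$ that confine the optimizer to a bounded neighborhood of $u^*$.
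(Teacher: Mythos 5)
Your proof is correct, and it takes a genuinely different route from the paper's. The paper differentiates the stationarity condition $\nbu G(t,x,u^{\diamond},p,P)=0$ with respect to $(p,P)$, solves for the Jacobian $\partial u^\diamond/\partial(p,P) = -(\nbu^2 G)^{-1}(\nbu b,\,\nbu D)$, and bounds its operator norm by $2K/\mu_G$ using strong concavity to control $(\nbu^2 G)^{-1}$. You instead avoid the implicit function theorem altogether: you exploit the strong monotonicity of $\nbu G(t,x,\cdot,p,P)$ directly, pair the difference of the two stationarity conditions with $u_1^\diamond-u_2^\diamond$, and use the affine dependence of $G$ on $(p,P)$ to bound the resulting right-hand side. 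Your monotone-operator argument gives the global Lipschitz bound in one step without needing to integrate a pointwise Jacobian estimate and sidesteps any differentiability issue for the implicit map; the paper's calculation is more explicit and makes the constant visible as $2K/\mu_G$. Both hinge on the same two ingredients, namely the uniform $\mu_G$-strong concavity of $G$ in $u$ and the bounds $\abs{\nbu b},\abs{\nbu D}\le K$ over the range enforced by Schauder estimates, so the conclusions and constants match.
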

\begin{proof}
By strong concavity of $G$ in $u$, $u^{\diamond}(t,x)$ in \eqref{eq:udiamond} is given by the equation
$$\nbu G(t,x,u^{\diamond}(t,x),-\nx V_u(t,x), -\nx^2 V_u(t,x)) = 0.$$
Therefore, for fixed $(t,x)$, we can view $u^{\diamond} = u^{\diamond}(t,x)$ as an implicit function of $p=-\nx V_u(t,x)$ and $P=-\nx^2 V_u(t,x)$.
So, \eqref{eq:implicit} is nothing but the Lipschitz condition of this implicit function. Therefore, it is sufficient to show the boundedness of the Jacobian of this implicit function.
Compute the Jacobian of $\nbu G(t,x,u^{\diamond},p,P) = 0$ w.r.t. $(p,P) \in \RR^{n+n^2}$, we obtain
\begin{equation*}
0 = \nbu^2 G(t,x,u^{\diamond},p,P) \cdot \pd{u^{\diamond}}{(p,P)} + (\nbu b(x,u),\, \nbu D(x,u)).
\end{equation*}
So
\begin{equation}\label{eq:Jacobian}
\pd{u^{\diamond}}{(p,P)} = - \parentheses{\nbu^2 G(t,x,u^{\diamond},p,P)}^{-1} (\nbu b(x,u),\, \nbu D(x,u)).
\end{equation}
By assumption \ref{assump:basic}, $\abs{\nbu b(x,u)} \le K$ and $\abs{\nbu D(x,u)} \le K$.
Since $G$ is $\mu_G$-strongly concave in $u$, $-(\nbu^2 G)^{-1}$ is positive definite with spectrum norm less than $1/\mu_G$. Therefore, \eqref{eq:Jacobian} implies
$$\abs{\pd{u^{\diamond}}{(p,P)}} \le \dfrac{2K}{\mu_G}.$$
So \eqref{eq:implicit} holds.
\end{proof}

\section{Proof for the theorems}\label{sec:thms}
Now, we are ready to prove Theorem \ref{thm:actor_converge} and \ref{thm:actor_rate}.
\begin{proof}[Proof of Theorem \ref{thm:actor_converge} and \ref{thm:actor_rate}]

\emph{Case 1.} We firstly consider an easy case where there exists positive constants $\mu_0$ and $\tau_0$ such that
\begin{equation}\label{eq:easy_case}
\norm{u^{\tau} - u^{\tau \diamond}}_{L^2} \ge \mu_0 \norm{u^{\tau} - u^*}_{L^2}
\end{equation}
for all $\tau \ge \tau_0$. Under such condition, we have
\begin{equation}\label{eq:Polyak}
\begin{aligned}
& \quad \dfrac{\rd}{\rd \tau} J[u^{\tau}] = \inner{\fd{J}{u}[u^{\tau}]}{\dfrac{\rd}{\rd \tau} u^{\tau}}_{L^2} \\
& = - \norm{\rho^{u^{\tau}}(t,x) \nbu G(t,x,u^{\tau}(t,x),-\nx V_{u^{\tau}},-\nx^2 V_{u^{\tau}})}_{L^2}^2 \\
& \le - \rho_0^2 \norm{\nbu G(t,x,u^{\tau}(t,x),-\nx V_{u^{\tau}},-\nx^2 V_{u^{\tau}})}_{L^2}^2 \le - \rho_0^2 \, \mu_G^2 \norm{u^{\tau} - u^{\tau \diamond}}_{L^2}^2 \\
& \le - \rho_0^2 \, \mu_G^2 \, \mu_0^2 \norm{u^{\tau} - u^*}_{L^2}^2 \le - \rho_0^2 \, \mu_G^2 \, \mu_0^2 \frac{1}{C_3} \parentheses{J[u^{\tau}]- J[u^*]},
\end{aligned}
\end{equation}
where we have consecutively used: chain rule; proposition \ref{prop:cost_derivative} and the control dynamic \eqref{eq:PG_ideal}; proposition \ref{prop:rho}; inequality \eqref{eq:strong_concave}; assumption \eqref{eq:easy_case}; and Lemma \ref{lem:J_quadratic} respectively. Equation \eqref{eq:Polyak} implies 
$$J[u^{\tau}] - J[u^*] \le e^{-c (\tau-\tau_0)} \parentheses{J[u^{\tau_0}] - J[u^*]}$$
holds with $c = \rho_0^2 \, \mu_G^2 \, \mu_0^2 \frac{1}{C_3}$. So \eqref{eq:actor_rate} holds. Therefore, the two theorems hold under this easy case.

\emph{Case 2.} Next, we focus on the harder case when \eqref{eq:easy_case} does not hold. Then we can find a sequence $\{\tau_k\}$, increasing to infinity, such that
\begin{equation*}
\norm{u^{\tau_k} - u^{\tau_k\diamond}}_{L^2} \le \frac1k \norm{u^{\tau_k} - u^*}_{L^2}.
\end{equation*}
For notational simplicity, we denote $u^{\tau_k}$ by $u_k$ and the corresponding value function $V_{u^{\tau_k}}$ by $V_k$. So we have
\begin{equation}\label{eq:hard_case}
\norm{u_k - u_k^{\diamond}}_{L^2} \le \frac1k \norm{u_k - u^*}_{L^2}.
\end{equation}
By Proposition \ref{prop:value_decreasing}, the value function $V_{u^{\tau}}(t,x)$ is decreasing in $\tau$, so it has a pointwise limit $V_{\infty}(t,x)$. Since $V_{u^{\tau}}(t,x) \ge V^*(t,x)$, we have $V_{\infty}(t,x) \ge V^*(t,x)$. We claim that
\begin{equation}\label{eq:claim}
V_{\infty}(t,x) \equiv V^*(t,x).
\end{equation}
The proof of this claim is quite long and technical, so we leave it to the next lemma \ref{lem:long} and focus on the rest of the proof first. With the claim holds, we know that $V_{u^{\tau}}(0,\cdot)$ converges to $V^*(0,\cdot)$ uniformly using the Lipschitz condition and Arzel\'a–Ascoli theorem. Therefore, using the relationship
$$J[u^{\tau}] = \int_{\mX} \rho^{u^{\tau}}(0,x) V_{u^{\tau}}(0,x) \,\rd x = \int_{\mX} V_{u^{\tau}}(0,x) \,\rd x ~~ \text{and} ~~ J[u^*] = \int_{\mX} V^*(0,x)\, \rd x, $$
we can show \eqref{eq:actor_convergence} and thus confirm Theorem \ref{thm:actor_converge}. 

Next, we show the convergence rate in Theorem \ref{thm:actor_rate}. By Assumption \ref{assump:actor_rate}, $$\lim_{\tau \to \infty} \norm{u^{\tau}-u^*}_{L^2} = 0,$$ hence $\lim_{k \to \infty} \norm{u_k-u^*}_{L^2} = 0$.
By Lemma \ref{lem:implicit}, we have 
$$\abs{u_k^{\diamond}(t,x) - u^*(t,x)} \le C_5 \parentheses{\abs{\nx V_k(t,x) - \nx V^*(t,x)} + \abs{\nx^2 V_k(t,x) - \nx^2 V^*(t,x)}} ,$$
hence
$$\norm{u_k^{\diamond} - u^*}_{L^2}^2 \le 2C_5^2 \parentheses{ \norm{\nx V_k - \nx V^*}_{L^2}^2 + \norm{\nx^2 V_k - \nx^2 V^*}_{L^2}^2}.$$
Therefore, by lemma \ref{lem:quadratic_Vu}, we have
$$\norm{u_k^{\diamond} - u^*}_{L^2} \le \sqrt{2} C_5 \normHtwo{V_k - V^*} \le \sqrt{2} C_5 C_4 \norm{u_k-u^*}_{L^2}^{1+\alpha}.$$
Therefore, we obtain
$$\norm{u_k-u^*}_{L^2} \le \norm{u_k - u_k^{\diamond}}_{L^2} + \norm{u_k^{\diamond} - u^*}_{L^2} \le \frac1k \norm{u_k-u^*}_{L^2} + C\norm{u_k-u^*}_{L^2}^{1+\alpha}.$$
However, this cannot hold when $k$ is sufficiently large (i.e., when $\norm{u_k-u^*}_{L^2}$ is sufficiently small) because we assume \eqref{eq:hard_case}. Therefore, the assumption \eqref{eq:hard_case} cannot hold under Assumption \ref{assump:actor_rate}. Hence \eqref{eq:easy_case} must hold and Theorem \ref{thm:actor_rate} is proved.
\end{proof}

\begin{lem}[claim \eqref{eq:claim}]\label{lem:long}
Under assumption \eqref{eq:hard_case} and all the assumptions in theorem \ref{thm:actor_converge}, \eqref{eq:claim} holds.
\end{lem}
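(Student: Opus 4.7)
The plan is to argue by contradiction, modeled on the doubling-of-variables technique used to prove uniqueness of viscosity solutions to the HJB equation. Suppose there exists $\btx \in [0,T] \times \mX$ with $V_\infty\btx - V^*\btx \ge \eta > 0$. Since $V_k$ decreases monotonically to $V_\infty$ and the family $\{V_k\}$ is uniformly bounded in $C^{2,4}$ by Assumption~\ref{assump:u_smooth}, the same strict gap persists for $V_k$ at $\btx$ for all large $k$. The ultimate goal is to combine the HJ equations satisfied by $V_k$ and $V^*$ with the closeness $\norm{u_k-u_k^\diamond}_{L^2} \le \frac1k\norm{u_k-u^*}_{L^2}$ from \eqref{eq:hard_case} to derive a contradiction at the selected test point.

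First I would introduce the two-point barrier
\begin{equation*}
\varphi(t,x,s,y) = \tfrac{1}{2\ve}\abs{t-s}^2 + \tfrac{1}{2\delta}\abs{x-y}^2 - \beta(t+s) + \tfrac{\lambda}{t} + \tfrac{\lambda}{s},
\end{equation*}
and set $\Phi_k(t,x,s,y) = V_k(t,x) - V^*(s,y) - \varphi(t,x,s,y)$. The penalty terms $\lambda/t$, $\lambda/s$ push the maximizer $(t_k,x_k,s_k,y_k)$ away from $t=0$, while $-\beta(t+s)$ ensures that $\Phi_k\btx[\bt,\bx] > 0$ for small enough $\beta,\lambda$. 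A standard compactness argument plus a comparison with $\Phi_k\btx[\bt,\bx]$ then yields $\abs{t_k-s_k}^2 \lesssim \ve$ and $\abs{x_k-y_k}^2 \lesssim \delta$. The boundary case $t_k \vee s_k \to T$ is handled using $V_\infty(T,\cdot) = V^*(T,\cdot) = h$ together with Lipschitz continuity, which forces $\Phi_k$ to be bounded away from its presumed value, contradicting the maximum.

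For the interior case I would pass to the perturbed functional
\begin{equation*}
\hPhik(t,x,s,y) = \Phi_k(t,x,s,y) - \tfrac{\mu}{2}\bigl(\abs{t-t_k}^2+\abs{x-x_k}^2+\abs{s-s_k}^2+\abs{y-y_k}^2\bigr) + q(t-t_k) + \inner{p}{x-x_k} + \hq(s-s_k) + \inner{\hp}{y-y_k},
\end{equation*}
so that its maximizer $(\htk,\hxk,\hsk,\hyk)$ is strictly interior and varies smoothly with the linear perturbations $(q,p,\hq,\hp)$. The first-order optimality relates the gradients of $V_k$ and $V^*$ to the barrier derivatives, and the second-order condition on $\nabla^2_{x,y}$ gives an upper bound on $\nx^2 V_k(\htk,\hxk) - \nx^2 V^*(\hsk,\hyk)$ (after subtracting $\mu I$). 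Substituting these into the HJ equations \eqref{eq:HJ} at $(\htk,\hxk)$ (for $V_k$ with control $u_k$) and at $(\hsk,\hyk)$ (for $V^*$ with control $u^*$), using the maximum defining $u_k^\diamond$ and the $\mu_G$-strong concavity of $G$ to replace $\sup_u G - G(\cdot,u_k,\cdot)$ by a term proportional to $\abs{u_k(\htk,\hxk) - u_k^\diamond(\htk,\hxk)}$, I arrive at the pointwise inequality
\begin{equation*}
2\beta + 2\lambda/T^2 \le 8L^4(\ve+\delta) + \mu K\bigl(\abs{\hsk-s_k}+\abs{\htk-t_k}+\abs{\hxk-x_k}+\abs{\hyk-y_k}\bigr) + \mu K^2 + K(\abs{p}+\abs{\hp}+\abs{q}+\abs{\hq}) + L\abs{u_k(\htk,\hxk)-u_k^\diamond(\htk,\hxk)}.
\end{equation*}

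The key conceptual step, and in my view the main obstacle, is converting the pointwise $\abs{u_k(\htk,\hxk)-u_k^\diamond(\htk,\hxk)}$ into the $L^2$ quantity controlled by \eqref{eq:hard_case}. I would achieve this by averaging the inequality over $(q,p,\hq,\hp)$ ranging in a cube of side length $r$ centered at the origin, and observing that the implicit map from $(q,p,\hq,\hp)$ to $(\htk,\hxk)$ has a Jacobian bounded below by a multiple of $\min(1/\ve,1/\delta)$ times a combinatorial factor, so a change of variables yields
\begin{equation*}
\int \abs{u_k(\htk,\hxk) - u_k^\diamond(\htk,\hxk)}\,dq\,dp\,d\hq\,d\hp \le C \ve\delta \sqrt{T}\,\norm{u_k-u_k^\diamond}_{L^2},
\end{equation*}
which after dividing by $r^{2n+2}$ gives the bound in the sketch. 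The remaining arithmetic is a cascade of parameter choices: fix $\eta$, then choose $\beta$ and $\lambda$ so that the left-hand side is a definite positive number; shrink $\mu$ so that $\mu K^2$ is negligible; shrink $\ve,\delta$ to kill the $8L^4(\ve+\delta)$ term; shrink $r$ to kill the $\mu K r$ and $K r/\ve + K r/\delta$ contributions; finally let $k\to\infty$ so that $r^{-2n-2}\norm{u_k-u_k^\diamond}_{L^2} \le r^{-2n-2}(1/k)\norm{u_k-u^*}_{L^2} \to 0$, using the already-established boundedness of $\norm{u_k-u^*}_{L^2}$. This yields the required contradiction and establishes $V_\infty \equiv V^*$.
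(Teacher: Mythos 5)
Your overall roadmap matches the paper's: same barrier $\varphi$, same perturbed functional $\hPhik$, same use of first- and second-order optimality to estimate $\partial_s V^*(\hsk,\hyk) - \partial_t V_k(\htk,\hxk)$ two ways via the HJ equations, same handling of the $t_k\vee s_k \approx T$ boundary case, and the same idea of averaging the pointwise inequality over the perturbations to reach the $L^2$ quantity in \eqref{eq:hard_case}. Where you diverge is in how that averaging is carried out, and there your key estimate is incorrect.

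You propose averaging over $(q,p,\hq,\hp)$ in a cube and changing variables via the implicit map $(q,p,\hq,\hp)\mapsto(\htk,\hxk,\hsk,\hyk)$, claiming its Jacobian is ``bounded below by a multiple of $\min(1/\ve,1/\delta)$'' and deriving
\begin{equation*}
\int \abs{u_k(\htk,\hxk) - u_k^\diamond(\htk,\hxk)}\,dq\,dp\,d\hq\,d\hp \le C \ve\delta \sqrt{T}\,\norm{u_k-u_k^\diamond}_{L^2}.
\end{equation*}
Neither the Jacobian bound nor the resulting factor is right. The paper shows the relevant matrix $A_k := \partial(q,p,\hq,\hp)/\partial(\htk,\hxk,\hsk,\hyk) = \mu I_{2n+2} - \nabla^2\Phi_k$ satisfies $A_k \ge \tfrac{\mu}{2}I$ (a lower bound depending only on $\mu$, not on $\ve,\delta$) and $\norm{A_k}_2 \le C(1/\ve + 1/\delta)$. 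Hence when you change variables, $dq\,dp\,d\hq\,d\hp = |\det A_k|\,d\htk\,d\hxk\,d\hsk\,d\hyk$ carries a factor of $|\det A_k|$ that is \emph{at most} $\bigl(C(1/\ve+1/\delta)\bigr)^{2n+2}$ — it blows up as $\ve,\delta\to 0$ — it certainly does not shrink like $\ve\delta$. Your claimed inequality has this dependence exactly backwards, and as written it would make the argument trivially easy (no need for $k\to\infty$ at all), which is a signal it cannot be right.

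The paper sidesteps this by integrating directly over $(\htk,\hxk,\hsk,\hyk)$ in a box of side $r$ centered at $(t_k,x_k,s_k,y_k)$ (never changing variables away from the argument of $u_k$, $u_k^\diamond$), using the estimate $|(q,p,\hq,\hp)| \le C(1/\ve+1/\delta)\abs{(\htk-t_k,\hxk-x_k,\hsk-s_k,\hyk-y_k)}$ to control the linear-perturbation contribution in \eqref{eq:ineq2}. That yields a term $r^{-2n-2}L\norm{u_k-u_k^\diamond}_{L^1}$ (their normalization; the $r$-power could be sharpened) that is killed by taking $k$ large \emph{after} $\ve,\delta,r$ are fixed. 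Your proposal would survive if you replaced your claimed inequality with the honest change-of-variables bound carrying the $(1/\ve+1/\delta)^{2n+2}$ factor, since \eqref{eq:hard_case} forces $\norm{u_k-u_k^\diamond}_{L^2}\to 0$ and $k$ is chosen last; but as stated the bound is false, so this step of the proof does not go through.
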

\begin{proof}
We assume to the contrary that there exists $\btx \in [0,T] \times \mX$ s.t. $V_{\infty}\btx - V^*\btx \ge \eta >0$. This implies that
\begin{equation}\label{eq:contrary}
V_k\btx - V^*\btx \ge \eta >0 ~~~ \forall k.
\end{equation}
By the Arzel\'a–Ascoli theorem, $V_k$ converges to $V_{\infty}$ uniformly and $V_{\infty}$ (hence $V_{\infty} - V^*$) is continuous. So, we can assume $\bt > 0$.
For any $\ve,\,\delta,\,\lambda \in (0,1)$, we define two continuous functions on $(0,T] \times \mX \times (0,T] \times \mX$
\begin{equation}\label{eq:varphi}
\varphi(t,x,s,y) := \dfrac{1}{2\ve} \abs{t-s}^2 + \dfrac{1}{2\delta}\abs{x-y}^2 + \dfrac{\lambda}{t} + \dfrac{\lambda}{s}
\end{equation}
and
\begin{equation}\label{eq:Phi_k}
\Phi_k(t,x,s,y) := V_k(t,x) - V^*(s,y) - \varphi(t,x,s,y).
\end{equation}
Since the domain of $\Phi_k$ is bounded and $\lim_{t\wedge s \to 0+} \Phi_k(t,x,s,y) = - \infty$, $\Phi_k(t,x,s,y)$ achieves its maximum at some point $(t_k, x_k, s_k, y_k) \in (0,T] \times \mX \times (0,T] \times \mX$. Note that $(t_k, x_k, s_k, y_k)$ depends on $\ve,\,\delta,\,\lambda$, and $k$. Using the inequality
$$2\Phi_k(t_k, x_k, s_k, y_k) \ge \Phi_k(t_k, x_k, t_k, x_k) + \Phi_k(s_k, y_k, s_k, y_k),$$
we obtain
\begin{equation*}
\begin{aligned}
\dfrac{1}{\ve} \abs{t_k-s_k}^2 + \dfrac{1}{\delta}\abs{x_k-y_k}^2 &\le V_k(t_k,x_k) - V_k(s_k,y_k) + V^*(t_k,x_k) - V^*(s_k,y_k) \\
&\le 2L \abs{(t_k,x_k) - (s_k,y_k)},
\end{aligned}
\end{equation*}
where we used the Lipschitz condition of $V^*$ and $V_k$ in the second inequality. Therefore,
\begin{equation*}
\frac{1}{\ve + \delta}\abs{(t_k,x_k) - (s_k,y_k)}^2 = \frac{1}{\ve + \delta} \parentheses{\abs{t_k-s_k}^2 + \abs{x_k-y_k}^2} \le 2L \abs{(t_k,x_k) - (s_k,y_k)}.
\end{equation*}
Hence,
\begin{equation}\label{eq:bound1}
\abs{(t_k,x_k) - (s_k,y_k)} \le 2L(\ve + \delta)
\end{equation}
and
\begin{equation}\label{eq:bound2}
\dfrac{1}{\ve} \abs{t_k-s_k}^2 + \dfrac{1}{\delta}\abs{x_k-y_k}^2 \le 4L^2(\ve + \delta)
\end{equation}
hold. We can also see that $\abs{t_k-s_k},\,\abs{x_k-y_k} \to 0$ as $\ve,\,\delta \to 0$. 
Another direct result we have is that
\begin{align*}
& \quad V_k\btx - V^*\btx - \varphi(\bt,\bx,\bt,\bx) = \Phi_k(\bt,\bx,\bt,\bx)\\
& \le \Phi_k(t_k, x_k, s_k, y_k) = V_k(t_k, x_k) - V^*(s_k,y_k) - \varphi(t_k, x_k, s_k, y_k),
\end{align*}
which implies
\begin{equation}\label{eq:ineq1}
\quad V_k\btx - V^*\btx - \dfrac{2\lambda}{\bt}
\le V_k(t_k, x_k) - V^*(s_k,y_k) - \dfrac{\lambda}{t_k} - \dfrac{\lambda}{s_k}.
\end{equation}

Next, we separate into two cases. The idea is that: when $t_k$ or $s_k$ are close to $T$, we use the fact that $V_{\infty}(T,\cdot) = V^*(T,\cdot) = h(\cdot)$ to derive a contradiction; when $t_k$ and $s_k$ are not close to $T$, we use positivity of $\lambda$ to derive a contradiction.

\emph{Case 1.} For any $K_0$ and $\alpha>0$, we can find $k \ge K_0$ and $\ve,\delta,\lambda < \alpha$ s.t. $t_k \vee s_k \ge T - \frac{\eta}{3L}$. Under this assumption, we can find a sequence $\{k_i,\ve_i,\delta_i,\lambda_i\}_{i=1}^{\infty}$ s.t. $k_i$ increases to infinity, $(\ve_i,\delta_i,\lambda_i)$ decrease to $0$s, and the corresponding $t_{k_i}, x_{k_i}, s_{k_i}, y_{k_i}$ satisfies $t_{k_i} \vee s_{k_i} \ge T - \frac{\eta}{3L}$ for all $i$.
Since $[0,T] \times \mX$ is bounded, we can pick a subsequence of this $\{k_i, \ve_i, \delta_i, \lambda_i\}_{i=1}^{\infty}$ (without changing notations) such that $t_{k_i}, x_{k_i}, s_{k_i}, y_{k_i}$ all converge.

Let $i \to \infty$. Note that \eqref{eq:bound1} implies $s_{k_i},t_{k_i}$ converge to some same limit $t_{\infty} \ge T - \frac{\eta}{3L}$ and $x_{k_i}, y_{k_i}$ converge to some same limit $x_{\infty}$. So, \eqref{eq:ineq1} becomes
\begin{equation*}
\begin{aligned}
& \quad V_{\infty}\btx - V^*\btx 
\le V_{\infty}(t_{\infty}, x_{\infty}) - V^*(t_{\infty}, x_{\infty}) \\
& = V_{\infty}(t_{\infty}, x_{\infty}) - V_{\infty}(T, x_{\infty}) + V^*(T, x_{\infty}) - V^*(t_{\infty}, x_{\infty}) \\
& \le L \abs{T - t_{\infty}} + L \abs{T - t_{\infty}} \le 2L \dfrac{\eta}{3L} = \dfrac{2}{3} \eta < \eta,
\end{aligned}
\end{equation*}
which contradicts to \eqref{eq:contrary}.

\emph{Case 2.} There exist $K_0$ and $\alpha_0>0$ s.t. for any $k\ge K_0$ and $\ve,\delta,\lambda < \alpha_0$, we have $t_k \vee s_k < T - \frac{\eta}{3L}$. In this second case, we will only focus on the situation when $k\ge K_0$ and $\ve,\delta,\lambda < \alpha_0$. Without loss of generality, we assume $K_0 \ge 1$ and $\alpha_0 \le 1$. We fix $\lambda < \alpha_0$ and let $k, \ve, \delta$ vary. Define $M := 4K+2T+2/\bt$ and $r_0 := \min\{ \frac{\lambda}{M (M+1)}, \frac{\eta}{6L} \}$. Note that $\lambda$ is fixed, so $r_0$ is an absolute constant. We also define
$$Q_0 := \{ (t,x,s,y) ~|~ \lambda/M \le t,s \le T-2r_0 \}.$$
Then $\Phi_k$ achieves its maximum $(t_k, x_k, s_k, y_k)$ in $Q_0$ because \eqref{eq:ineq1} cannot hold if $t_k \le \lambda/M$ or $s_k \le \lambda/M$. Next, we define
\begin{equation}\label{eq:Q_set}
Q := \{ (t,x,s,y) ~|~ \lambda/(M+1) < t,s < T-r_0 \}.
\end{equation}
We find $Q_0 \subset Q$. Also,
\begin{equation*}
t_k - \dfrac{\lambda}{M+1} \ge \dfrac{\lambda}{M} - \dfrac{\lambda}{M+1} = \dfrac{\lambda}{M(M+1)} \ge r_0,
\end{equation*}
and $T - r_0 - t_k \ge r_0$. $s_k$ also satisfies the two inequalities. Restricted in $Q$, $\Phi_k$ has bounded derivatives. Next, we want to show that the maximum is still in $Q$ if we make a small perturbation on $\Phi_k$.

We define $\mu>0$ by
\begin{equation}\label{eq:mu_def}
2\mu(K+1) + \mu K^2 = \dfrac{\lambda}{2T^2}.
\end{equation}
Let $r_1 := \mu r_0 / 4$. We pick $(q,p,\hq,\hp) \in \RR^{1+n+1+n}$ s.t. $\abs{p},\abs{q},\abs{\hp},\abs{\hq} \le r_1$. Then we define a new function
\begin{equation}\label{eq:hPhik}
\begin{aligned}
    \hPhik(t,x,s,y) =& \Phi_k(t,x,s,y) - \dfrac{\mu}{2}\parentheses{\abs{t-t_k}^2 + \abs{x-x_k}^2 + \abs{s-s_k}^2 + \abs{y-y_k}^2}\\
    &+ q(t-t_k) + \inner{p}{x-x_k} + \hq(s-s_k) + \inner{\hp}{y-y_k}.
\end{aligned}
\end{equation}
If we do not have the second line in \eqref{eq:hPhik}, then $\hPhik$ achieves a strict maximum at $(t_k, x_k, s_k, y_k)$. This second line can be viewed as a linear perturbation. 
So, $\hPhik$ achieves a maximum at some other point in $\RR^{1+n+1+n}$, denoted by $(\htk,\hxk,\hsk,\hyk)$. By this optimality, $(\htk,\hxk,\hsk,\hyk)$ must lie in the set
\begin{align*}
& \left\{ (t,x,s,y) ~\Big|~ \dfrac{\mu}{2}\parentheses{\abs{t-t_k}^2 + \abs{x-x_k}^2 + \abs{s-s_k}^2 + \abs{y-y_k}^2} \right.\\
& \hspace{0.8in} \left. \le q(t-t_k) + \inner{p}{x-x_k} + \hq(s-s_k) + \inner{\hp}{y-y_k} \right\}.
\end{align*}
So,
$$\dfrac{\mu}{2} \abs{ (\htk,\hxk,\hsk,\hyk) - (t_k, x_k, s_k, y_k) }^2 \le \abs{ (\htk,\hxk,\hsk,\hyk) - (t_k, x_k, s_k, y_k) } \cdot \abs{(q,p,\hq,\hp) }.$$
Therefore,
$$\abs{ (\htk,\hxk,\hsk,\hyk) - (t_k, x_k, s_k, y_k) } \le \dfrac{2}{\mu} \abs{(q,p,\hq,\hp) } \le \dfrac{2}{\mu} ~ 2r_1 = r_0.$$
So $\abs{\htk - t_k}, \abs{\hsk - s_k} \le r_0$, which implies $(\htk,\hxk,\hsk,\hyk) \in Q$. More importantly, $(\htk,\hxk,\hsk,\hyk)$ lies in the interior of $(0,T] \times \mX \times (0,T] \times \mX$. So, by the optimality of $(\htk,\hxk,\hsk,\hyk)$, we have
\begin{equation}\label{eq:optimality_hPhik}
\left\{ \begin{aligned}
&0 = \pt\, \hPhik(\htk,\hxk,\hsk,\hyk) = \pt V_k(\htk,\hxk) - \pt\,  \varphi(\htk,\hxk,\hsk,\hyk) - \mu(\htk-t_k) + q\\
&0 = \ps\, \hPhik(\htk,\hxk,\hsk,\hyk) = -\ps V^*(\hsk,\hyk) - \ps\,  \varphi(\htk,\hxk,\hsk,\hyk) - \mu(\hsk-s_k) + \hq\\
&0 = \nx \hPhik(\htk,\hxk,\hsk,\hyk) = \nx V_k(\htk,\hxk) - \nx  \varphi(\htk,\hxk,\hsk,\hyk) - \mu(\hxk-x_k) + p\\
&0 = \ny \hPhik(\htk,\hxk,\hsk,\hyk) = -\ny V^*(\hsk,\hyk) - \ny  \varphi(\htk,\hxk,\hsk,\hyk) - \mu(\hyk-y_k) + \hp\\
&\begin{pmatrix} \nx^2V_k(\htk,\hxk) & 0 \\0 & -\ny^2V^*(\hsk,\hyk) \end{pmatrix} \le 
\nabla_{x,y}^2 \varphi
+ \mu I_{2n} = \dfrac{1}{\delta}\begin{pmatrix} I_n & -I_n \\ -I_n  & I_n \end{pmatrix} + \mu I_{2n}
\end{aligned} \right.
\end{equation}
as first and second order necessary conditions. Note that $(\htk,\hxk,\hsk,\hyk)$ depend on $\ve$, $\delta$, $\lambda$, $q$, $p$, $\hq$, $\hp$, $\mu$, and $k$. Also, recall that $\lambda$, and $\mu$ are fixed.

For given $\ve$, $\delta$ and $k$, we can view $(\htk,\hxk,\hsk,\hyk)$ as an implicit function of $(q,p,\hq,\hp)$, given by the equation $\nabla \hPhik(\htk,\hxk,\hsk,\hyk) = 0$, i.e.,
\begin{equation}\label{eq:implicit_eqn}
(q,p,\hq,\hp) = -\nabla \Phi_k(\htk,\hxk,\hsk,\hyk) + \mu(\htk-t_k, \hxk-x_k, \hsk-s_k, \hyk-y_k).
\end{equation}
Here, the gradient is taken w.r.t. $(t,x,s,y)$.
We claim that we can consider it inversely and view $(q,p,\hq,\hp)$ as an implicit function of $(\htk,\hxk,\hsk,\hyk)$ locally, still given by \eqref{eq:implicit_eqn}. 
The Jacobian of this implicit function is given by
\begin{equation}\label{eq:Ak}
A_k := \pd{(q,p,\hq,\hp)}{(\htk,\hxk,\hsk,\hyk)} = \mu I_{2n+2} - \nabla^2 \Phi_k(\htk,\hxk,\hsk,\hyk).
\end{equation}
We will show the claim by proving that $A_k$ is nonsingular locally.

 Let us restrict
\begin{equation}\label{eq:range_hat}
\abs{\htk-t_k}, \abs{\hxk-x_k}, \abs{\hsk-s_k}, \abs{\hyk-y_k} < r_2
\end{equation}
where
\begin{equation}\label{eq:range_r}
0 < r_2 < \mu / \sqbra{8\parentheses{L + 3(M+1)^4 / \lambda^3}}.
\end{equation}
Later, this $r_2$ will change according to $\ve$ and $\delta$, but will be independent with $k$. We give an estimate next.
\begin{equation}\label{eq:diff_d2Phi}
\begin{aligned}
& \quad \abs{ \nabla^2 \Phi_k(t_k, x_k, s_k, y_k) - \nabla^2 \Phi_k(\htk,\hxk,\hsk,\hyk) } \\
& \le \abs{\nabla^2 V_k(\htk, \hxk) - \nabla^2 V_k(t_k, x_k)} + \abs{\nabla^2 V^*(\hsk, \hyk) - \nabla^2 V^*(s_k, y_k)} \\
& \quad + 2\lambda \parentheses{ \abs{\htk^{-3} - t_k^{-3}} + \abs{\hsk^{-3} - s_k^{-3}} } \\
& \le L \abs{(\htk, \hxk) - (t_k, x_k)} + L \abs{(\hsk, \hyk) - (s_k, y_k)} \\
& \quad + 2\lambda \parentheses{ 3\abs{\htk - t_k} \parentheses{\min\{ \htk, t_k \}}^{-4} + 3\abs{\hsk - s_k} \parentheses{\min\{ \hsk, s_k \}}^{-4}}\\
& \le 4Lr_2 + 12\lambda r_2 \parentheses{ \lambda / (M+1) }^{-4} \le \dfrac{1}{2} \mu.
\end{aligned}
\end{equation}
Here, we used the the definition of $\varphi$ \eqref{eq:varphi} and $\Phi_k$ \eqref{eq:Phi_k} in the first inequality.
The third inequality is due to the range of $\htk$, $\hsk$, $t_k$, $s_k$ given by \eqref{eq:Q_set}. The fourth inequality comes from the range for $r_2$ in \eqref{eq:range_r}. In the second inequality, we used the Lipschitz condition for the derivatives of the value functions and a mean value theorem. Note that the $\nabla$ in \eqref{eq:diff_d2Phi} operates on all the inputs, so we also used the Lipschitz condition of $\pt^2 V_k(t,x)$ (and $\ps^2 V^*(s,y)$). If we take derivative of the HJ equation \eqref{eq:HJ} w.r.t. $t$, we get
\begin{equation}\label{eq:V_tt}
\begin{aligned}
\pt^2 V_k(t,x) & = - \pt \Tr\parentheses{D(x,u(t,x)) \nx^2 V_u(t,x)} \\
& \quad - \pt \inner{b(x,u(t,x))}{\nx V_u(t,x)} - \pt r(x,u(t,x)).
\end{aligned}
\end{equation}
Expanding the right hand side of \eqref{eq:V_tt} with chain rule and product rule, we find that each term is bounded and Lipschitz in $t$ and $x$, so $\pt^2 V_k(t,x)$ (and $\ps^2 V^*(s,y)$) is Lipschitz. We also remark that this part \eqref{eq:diff_d2Phi} makes the analysis not easy to generalize to the viscosity solution of the HJB equation, which does not have sufficient regularity in general. Therefore,
\begin{equation*}
\begin{aligned}
& \quad A_k = \mu I_{2n+2} - \nabla^2 \Phi_k(\htk, \hxk, \hsk, \hyk) \\
& = \mu I_{2n+2} - \nabla^2 \Phi_k(t_k, x_k, s_k, y_k) + \parentheses{\nabla^2 \Phi_k(t_k, x_k, s_k, y_k) - \nabla^2 \Phi_k(\htk, \hxk, \hsk, \hyk)} \\
& \ge \mu I_{2n+2} - \abs{\nabla^2 \Phi_k(t_k, x_k, s_k, y_k) - \nabla^2 \Phi_k(\htk,\hxk,\hsk,\hyk)} \cdot I_{2n+2} \ge \dfrac{1}{2} \mu I_{2n+2}.
\end{aligned}
\end{equation*}
Here, the inequality $\ge$ between two symmetric matrix means that their difference is positive semi-definite. In the first inequality, we use the fact that $\nabla^2 \Phi_k(t_k, x_k, s_k, y_k) \le 0$, coming from the optimality of $(t_k, x_k, s_k, y_k)$. In the second equality, we use the estimate \eqref{eq:diff_d2Phi}. 
Therefore, the Jacobian $A_k$ in \eqref{eq:Ak} always nonsingular when \eqref{eq:range_r} holds and we confirm the claim after \eqref{eq:implicit_eqn}.

Next, we also want to derive an upper bound for this Jacobian. A direct calculation from \eqref{eq:Ak} gives us
\begin{equation*}
\begin{aligned}
\norm{A_k}_2 \le & \mu + \norm{\nx^2V_k(\htk,\hxk)}_2 + \norm{\ny^2V^*(\hsk,\hyk)}_2\\
& + \dfrac{1}{\ve} \norm{ \begin{pmatrix} 1 & -1 \\-1 & 1 \end{pmatrix}}_2 + \dfrac{1}{\delta} \norm{\begin{pmatrix} I_n & -I_n \\-I_n & I_n \end{pmatrix}}_2 + \dfrac{4\lambda}{(\lambda/M)^2}\\
\le & \mu + 2K + \dfrac{2}{\ve} + \dfrac{2}{\delta} + \dfrac{4M^2}{\lambda} \le C \parentheses{\dfrac{1}{\ve} + \dfrac{1}{\delta}}.
\end{aligned}
\end{equation*}
Note that the notation $\norm{\cdot}_2$ is the matrix norm (instead of the Frobenius norm). The first inequality above is by definition of $\varphi$ \eqref{eq:varphi} and $\Phi_k$ \eqref{eq:Phi_k}. The second is by boundedness of the derivatives of the value functions. The third is because $\lambda$ is fixed while $\ve$ and $\delta$ are small and are going to $0$s later. Therefore, we obtain an estimate 
\begin{equation}\label{eq:regularity_p}
\abs{(q,p,\hq,\hp)} \le C \parentheses{\dfrac{1}{\ve} + \dfrac{1}{\delta}} \abs{(\htk-t_k, \hxk-x_k, \hsk-s_k, \hyk-y_k)}.
\end{equation}
Therefore, we also require that 
\begin{equation}\label{eq:range_r2}
r_2 \le \parentheses{\dfrac{1}{\ve} + \dfrac{1}{\delta}}^{-1} \dfrac{r_1}{2C}
\end{equation}
where the $C$ in \eqref{eq:range_r2} is the same as the $C$ in \eqref{eq:regularity_p}, in order to guarantee 
$\abs{(q,p,\hq,\hp)} \le r_1$. Now we can see that $r_2$ depends on $\ve$ and $\delta$, but it is independent of $k$.

Next, we consider the quantity
\begin{equation*}
B_k := \ps V^*(\hsk,\hyk) - \pt V_k(\htk,\hxk),
\end{equation*}
which depends on $\ve,\,\delta,\,\lambda,\,q,\,p,\,\hq,\,\hp,\,\mu,$ and $k$. On the one hand, by the optimality condition \eqref{eq:optimality_hPhik},
\begin{equation}\label{eq:one_hand}
\begin{aligned}
B_k &= -\ps \varphi(\htk,\hxk,\hsk,\hyk) -\pt \varphi(\htk,\hxk,\hsk,\hyk) - \mu \sqbra{(\hsk-s_k) + (\htk-t_k)} + q+\hq \\
& = \lambda/\htk^{~2} + \lambda/\hsk^{~2} - \mu \sqbra{(\hsk-s_k) + (\htk-t_k)} + q+\hq \\
& \ge 2\lambda/T^2 - \mu \parentheses{\abs{\hsk-s_k} + \abs{\htk-t_k}} + q+\hq,
\end{aligned}
\end{equation}
where the terms with $\ve$ in $\ps \varphi$ and $\pt \varphi$ cancel each other.

On the other hand, using the HJ equations that $V^*$ and $V_k$ satisfy, we have
\begin{equation*}
\begin{aligned}
B_k &= G(\hsk,\hyk,u^*(\hsk,\hyk),-\ny V^*,-\ny^2 V^*) - G(\htk,\hxk,u_k(\htk,\hxk),-\nx V_k,-\nx^2 V_k) \\
& = \sup_{u} G(\hsk,\hyk,u,-\ny V^*,-\ny^2 V^*) - G(\htk,\hxk,u_k(\htk,\hxk),-\nx V_k,-\nx^2 V_k) \\
& \le \sup_{u} G(\hsk,\hyk,u,-\ny V^*,-\ny^2 V^*) - \sup_{u} G(\htk,\hxk,u,-\nx V_k,-\nx^2 V_k) \\
& \quad + L \abs{u_k(\htk,\hxk) - u_k^{\diamond}(\htk,\hxk)}\\
& \le \sup_{u} \sqbra{G(\hsk,\hyk,u,-\ny V^*,-\ny^2 V^*) - G(\htk,\hxk,u,-\nx V_k,-\nx^2 V_k)}\\
& \quad + L \abs{u_k(\htk,\hxk) - u_k^{\diamond}(\htk,\hxk)},
\end{aligned}
\end{equation*}
where we have consecutively used: HJ equations for $V^*$ and $V_k$; the optimality condition \eqref{eq:max1} for $u^*$; the definition of $u_k^{\diamond}$ \eqref{eq:udiamond} and the Lipschitz condition of $G$ in $u$; a simple inequality. Therefore, by the definition of $G$ \eqref{eq:generalized_Hamiltonian}, 
\begin{equation}\label{eq:other_hand}
\begin{aligned}
B_k & \le \sup_{u} \left\{ \frac12 \Tr \sqbra{ \nx^2V_k(\htk,\hxk)\sigma\sigma\tp(\hxk,u) - \ny^2 V^*(\hsk,\hyk) \sigma\sigma\tp(\hyk,u)} \right.\\
& \quad +\sqbra{ \inner{\nx V_k(\htk,\hxk)}{b(\hxk,u)} - \inner{\ny V^*(\hsk,\hyk)}{b(\hyk,u)} } \\
& \quad + r(\hxk,u) - r(\hyk,u) \bigg\} + L \abs{u_k(\htk,\hxk) - u_k^{\diamond}(\htk,\hxk)}\\
& =: \sup_{u} \curlybra{(\rom{1}) + (\rom{2}) + (\rom{3})} + L \abs{u_k(\htk,\hxk) - u_k^{\diamond}(\htk,\hxk)}.
\end{aligned}
\end{equation}

Next, we bound the three terms in \eqref{eq:other_hand}. Using the estimates \eqref{eq:bound1} and \eqref{eq:bound2}, we can easily show that
\begin{equation}\label{eq:bound3}
\abs{\hxk-\hyk} \le \abs{\hxk-x_k} + \abs{\hyk-y_k} + \abs{x_k - y_k} \le 2r_2 + 2L(\ve + \delta)
\end{equation}
and
\begin{equation}\label{eq:bound4}
\dfrac{1}{\delta}\abs{\hxk-\hyk}^2 \le \dfrac{1}{\delta} \parentheses{8r_2^2 + 2\abs{x_k - y_k}^2} \le \dfrac{8r_2^2}{\delta} + 8L^2(\ve + \delta)
\end{equation}

For $(\rom{3})$, we have
\begin{equation}\label{eq:term3}
(\rom{3}) = r(\hxk,u) - r(\hyk,u) \le L \abs{\hxk -\hyk} \le 2Lr_2 + 2L^2(\ve+\delta),
\end{equation}
where we used Lipschitz condition of $r$ in Assumption \ref{assump:basic} and \eqref{eq:bound3}.

For $(\rom{2})$, we have
\begin{equation}\label{eq:term2}
\begin{aligned}
(\rom{2}) &= \inner{\nx \varphi(\htk,\hxk,\hsk,\hyk) + \mu(\hxk-x_k) -p}{b(\hxk,u)}\\
& \quad + \inner{\ny \varphi(\htk,\hxk,\hsk,\hyk) + \mu(\hyk-y_k) -\hp}{b(\hyk,u)}\\
& = \inner{\dfrac{1}{\delta} (\hxk-\hyk) + \mu(\hxk-x_k) -p}{b(\hxk,u)} \\
& \quad + \inner{ \dfrac{1}{\delta} (\hyk-\hxk) + \mu(\hyk-y_k) -\hp}{b(\hyk,u)}\\
& \le \dfrac{L}{\delta} \abs{\hxk-\hyk}^2 + \mu K(\abs{\hxk-x_k} + \abs{\hyk-y_k}) + K(\abs{p}+\abs{\hp})\\
& \le 8r_2^2L/\delta + 8L^3(\ve+\delta) +  + \mu K(\abs{\hxk-x_k} + \abs{\hyk-y_k}) + K(\abs{p}+\abs{\hp}),
\end{aligned}
\end{equation}
where we have consecutively used: the optimality condition \eqref{eq:optimality_hPhik}; the definition of $\varphi$ in \eqref{eq:varphi}; boundness and Lipschitz condition of $b$ in Assumption \ref{assump:basic}; the bound \eqref{eq:bound4}.

For $(\rom{1})$, we have
\begin{equation}\label{eq:term1}
\begin{aligned}
(\rom{1}) &= \dfrac12 \Tr \sqbra{ \begin{pmatrix}  \sigma(\hxk,u) \\ \sigma(\hyk,u) \end{pmatrix}\tp \begin{pmatrix} \nx^2V_k(\htk,\hxk) &0 \\ 0& -\ny^2V^*(\hsk,\hyk) \end{pmatrix} \begin{pmatrix}  \sigma(\hxk,u) \\ \sigma(\hyk,u) \end{pmatrix} }\\
&\le \dfrac12 \Tr \sqbra{ \begin{pmatrix}  \sigma(\hxk,u) \\ \sigma(\hyk,u) \end{pmatrix}\tp \parentheses{\dfrac{1}{\delta}\begin{pmatrix} I_n & -I_n \\ -I_n & I_n \end{pmatrix} + \mu I_{2n}} \begin{pmatrix}  \sigma(\hxk,u) \\ \sigma(\hyk,u) \end{pmatrix} }\\
&= \dfrac{1}{2\delta} \abs{\sigma(\hxk,u) - \sigma(\hyk,u)}^2 + \dfrac{\mu}{2} \parentheses{\abs{\sigma(\hxk,u)}^2 + \abs{\sigma(\hyk,u)}^2}\\
& \le \dfrac{L^2}{2\delta} \abs{\hxk-\hyk}^2 + \mu K^2 \le 4r_2^2L^2/\delta + 4L^4(\ve + \delta) + \mu K^2,
\end{aligned}
\end{equation}
where we have consecutively used: a simple transform in linear algebra; the second order optimality condition in \eqref{eq:optimality_hPhik}; a simple calculation; boundness and Lipschitz condition of $\sigma$ in Assumption \ref{assump:basic}; the bound \eqref{eq:bound4}.

Combining \eqref{eq:one_hand}, \eqref{eq:other_hand}, \eqref{eq:term3}, \eqref{eq:term2}, and \eqref{eq:term1}, we obtain
\begin{equation*}
\begin{aligned}
& \quad 2\lambda/T^2 - \mu \parentheses{\abs{\hsk-s_k} + \abs{\htk-t_k}} + q+\hq \\
& \le 4r_2^2L^2/\delta + 4L^4(\ve + \delta) + \mu K^2 +8r_2^2L/\delta + 8L^3(\ve+\delta) + \mu K\parentheses{\abs{\hxk-x_k} + \abs{\hyk-y_k}} \\
& \quad + K(\abs{p}+\abs{\hp}) + 2Lr_2 + 2L^2(\ve+\delta) + L \abs{u_k(\htk,\hxk) - u_k^{\diamond}(\htk,\hxk)}.
\end{aligned}
\end{equation*}
which simplifies to
\begin{equation}\label{eq:ineq2}
\begin{aligned}
2\lambda/T^2  \le 2\mu(K+1)r_2 + \mu K^2 + (4L^4+8L^3+2L^2)(\ve+\delta) + (4r_2^2L^2+8r_2^2L)/\delta \\
+ (2K+2)C\parentheses{\dfrac{1}{\ve} + \dfrac{1}{\delta}}r_2 + L \abs{u_k(\htk,\hxk) - u_k^{\diamond}(\htk,\hxk)}
\end{aligned}
\end{equation}
with the help of the bounds \eqref{eq:regularity_p} and \eqref{eq:range_hat}.
Next, we pick a box in $\RR^{2n+2}$ that centered at $(t_k, x_k, s_k, y_k)$ and have side length $r_3 = 2r_2/\sqrt{n}$. Then $\abs{\hxk - x_k}, \abs{\hyk - y_k} \le \sqrt{n}r_3/2 = r_2$, so that all the estimates before hold in this box. If we integrate \eqref{eq:ineq2} over the box w.r.t. $(\htk,\hxk,\hsk,\hyk)$ and divided it by $r_3^{2n+2}$, we obtain
\begin{equation}\label{eq:ineq3}
\begin{aligned}
2\lambda/T^2  &\le 2\mu(K+1)r_2 + \mu K^2 + (4L^4+8L^3+2L^2)(\ve+\delta) + (4r_2^2L^2+8r_2^2L)/\delta\\
& \quad + (2K+2)C\parentheses{\dfrac{1}{\ve} + \dfrac{1}{\delta}}r_2 +  (2r_2/\sqrt{n})^{-2n-2} L \norm{u_k- u_k^{\diamond}}_{L^1}.
\end{aligned}
\end{equation}
We recall that $\lambda$ are fixed at first. We also recall that the definition of $\mu$ in \eqref{eq:mu_def} ensures that
\begin{equation}\label{eq:contra1}
2\mu(K+1)r_2 + \mu K^2 \le \dfrac{\lambda}{2T^2}.
\end{equation}
Therefore, if we firstly set $\ve$ and $\delta$ to be small such that 
\begin{equation}\label{eq:contra2}
(4L^4+8L^3+2L^2)(\ve + \delta) < \dfrac{\lambda}{2T^2}.
\end{equation}
Then we set $r_2$ to be small such that
\begin{equation}\label{eq:contra3}
(4r_2^2L^2+8r_2^2L)/\delta + (2K+2)C\parentheses{\dfrac{1}{\ve} + \dfrac{1}{\delta}}r_2 < \dfrac{\lambda}{2T^2},
\end{equation}
where the $C$ in \eqref{eq:contra3} is the same as the $C$ in \eqref{eq:ineq3}. Next, note that $\norm{u_k- u_k^{\diamond}}_{L^1} \le \sqrt{T} \norm{u_k- u_k^{\diamond}}_{L^2}$. So, by \eqref{eq:hard_case}, we can set $k$ to be large enough such that 
\begin{equation}\label{eq:contra4}
(2r_2/\sqrt{n})^{-2n-2} L \norm{u_k- u_k^{\diamond}}_{L^1} < \dfrac{\lambda}{2T^2},
\end{equation}
Finally, substituting \eqref{eq:contra1}, \eqref{eq:contra2}, \eqref{eq:contra3}, and \eqref{eq:contra4} into \eqref{eq:ineq3}, we obtain an contradiction, so Lemma \ref{lem:long} is proved.
\end{proof}

\end{document}